\tikzset{>=latex} 
\newcommand{\regphi}{\phi}
\let\regphi\phi
\newcommand{\C}{\mathbb{C}}
\newcommand{\Q}{\mathbb{Q}}
\newcommand{\Z}{\mathbb{Z}}
\newcommand{\N}{\mathbb{N}}
\newcommand{\R}{\mathbb{R}}
\newcommand{\T}{\mathbb{T}}
\newcommand{\Tr}{\operatorname{Tr}}
\newcommand{\Khalf}{\frac{K}{2}}
\newcommand{\Monehalf}{\frac{M_1}{2}}
\renewcommand{\phi}{\varphi}
\renewcommand{\theta}{\vartheta}
\renewcommand{\epsilon}{\varepsilon}
\newtheorem{theo}{Theorem}[section]
\newtheorem{prop}[theo]{Proposition}
\newtheorem{coro}[theo]{Corollary}
\newtheorem{lemm}[theo]{Lemma}
\newtheorem{conj}[theo]{Conjecture}
\theoremstyle{definition}
\newtheorem{ques}[theo]{Question}
\theoremstyle{definition}
\newtheorem{def1}[theo]{Definition}
\theoremstyle{remark}
\newtheorem{rema}[theo]{Remark}
\theoremstyle{definition}
\newtheorem{exam}[theo]{Example}
\newcommand{\nwc}{\newcommand}
\nwc{\Oph}{\operatorname{Op}_\hbar}
\nwc{\la}{\langle}
\nwc{\ra}{\rangle}
\nwc{\mf}{\mathbf} 
\nwc{\blds}{\boldsymbol} 
\nwc{\ml}{\mathcal} 
\newcommand{\eps}{\varepsilon}
\newcommand{\tr}{\operatorname{Tr}}
\newcommand{\wt}{\widetilde}
\newcommand{\wh}{\widehat}
\renewcommand{\d}{\partial}
\newcommand{\half}{{\frac{1}{2}}}
\renewcommand{\phi}{\varphi}
\newcommand{\lsp}{\operatorname{LSP}}
\newcommand{\mls}{\operatorname{MLS}}
\newcommand{\Y}{\mathcal{Y}}
\newcommand{\red}[1]{{\color{black}{#1}}}
\newcommand{\black}[1]{\color{black}}
\title[Marked Length Isospectral Compactness]{Compactness of Marked Length Isospectral Sets of Birkhoff Billiard Tables}
\date{}
\author{Amir Vig}
\address{Department of Mathematics, University of Toronto, 40 St George St., Toronto, ON, Canada M5S 2E4} \email{amir.vig@utoronto.ca}
\newcommand{\I}{\mathcal{I}}
\newcommand{\M}{\mathcal{M}}
\newcommand{\dtpm}{\delta_\pm}
\newcommand{\dtp}{\delta_+}
\newcommand{\dt}{\delta}
\newcommand{\hot}{\text{h.o.t.}}
\newcommand{\lot}{\text{l.o.t.}}
\newcommand{\ka}{\kappa}
\newcommand{\J}{\mathcal{J}}
\newcommand{\len}{\operatorname{length}}
\newcommand{\z}{\mathcal{Z}}
\newcommand{\q}{\mathcal{Q}}
\newcommand{\RR}{\mathcal{R}}
\newcommand{\WW}{\mathcal{W}}
\newcommand{\Hm}{\mathcal{H}_{2m - 2, 3}}
\newcommand{\zr}{\zeta_\text{Riem}}
\newcommand{\msfz}{Z}
\begin{document}
	\maketitle
	\begin{abstract}
		We prove that equivalence classes of marked length isospectral Birkhoff billiard tables are compact in the $C^\infty$-topology, analogous to the Laplace spectral results in \cite{MIC}, \cite{POS2} and \cite{POS1}. To do so, we derive a hierarchical structure for the integral invariants of Marvizi and Melrose \cite{MM}, or equivalently the coefficients of a caustic length-Lazutkin parameter expansion, which are in turn algebraically equivalent to the Taylor coefficients of Mather's $\beta$-function (also called the mean minimal action). Under a generically satisfied noncoincidence condition, these are also Laplace spectral invariants. As a byproduct, we obtain an independent proof of the compactness of Laplace isospectral sets for \red{generic} strictly convex planar billiard tables. The proof of the structure theorem uses an interpolating Hamiltonian for nearly glancing billiard orbits and some analytic number theory to compute its Taylor coefficients.
	\end{abstract}

	\section{Main Results}
	\label{sec: Main Results}
		Let $\Omega \subset \R^2$ be a smooth, bounded and strictly \red{(strongly)} convex domain. Such a domain is uniquely characterized \red{up to Euclidean isometry} by the curvature of its boundary, which is a strictly positive function. Billiard orbits are concatenations of oriented straight line segments in $\Omega$ which make equal angles when reflected at the boundary. To each periodic billiard orbit, we can associate a rotation number $p/q$, where $p$ is the winding number and $q$ is the bounce number. The marked length spectrum of $\Omega$ is a map $\mls_\Omega : \Q \cap (0,\frac{1}{2}] \to \R$ which associates to each rational $p/q$ in reduced form, the maximal length of a periodic billiard orbit with rotation number $p/q$. We have the following dynamical analogue of the Laplace spectral results in \cite{MIC}, \cite{POS2}, and \cite{POS1}:
		
	\begin{theo}\label{cpt}
		For all $\Omega \subset \R^2$ with $\d \Omega$ smooth, bounded and strictly convex, the marked length isospectral set containing $\Omega$ is compact in the $C^\infty$-topology on boundary curvatures.
	\end{theo}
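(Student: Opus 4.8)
The plan is to adapt the method of Melrose and of Osgood--Phillips--Sarnak from the Laplace setting, with the low-order Taylor coefficients of Mather's $\beta$ function at rotation number $0$ playing the role of the heat-trace coefficients. The first observation is that the marked length spectrum determines these coefficients: for each $p/q\in(0,\tfrac12]\cap\Q$ the length of the action-minimizing $(p,q)$ orbit equals $q\,\beta(p/q)$ up to a fixed sign and belongs to the marked length spectrum, and since $\beta$ is convex (hence continuous) on $[0,\tfrac12]$ and admits a Taylor expansion at $0$ to the relevant order for smooth strictly convex $\Omega$, the spectrum pins down $\beta^{(1)}(0),\dots,\beta^{(5)}(0)$. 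The even-order coefficients vanish, and by the algebraic equivalences recalled in the introduction the odd-order ones carry the same information as the first Marvizi--Melrose invariants $I_0=|\d\Omega|$, $I_1$, $I_2$. In particular the perimeter --- and also the diameter, via the action-minimizing $(1,2)$ orbit --- is a marked length spectral invariant.

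The analytic content is a structural description of $I_1$ and $I_2$ (a special case of Conjecture \ref{main}): writing $\kappa$ for the curvature and $s$ for arclength,
\[
I_1 \;=\; c_1\Big(\int_{\d\Omega}\kappa^{2/3}\,ds\Big)^{3},\qquad
I_2 \;=\; c_2\Big(\int_{\d\Omega}\kappa^{2/3}\,ds\Big)^{\alpha}\!\!\int_{\d\Omega}\kappa^{\,a}\,(\d_s\kappa)^2\,ds \;+\; R(\kappa),
\]
with $c_1,c_2\neq 0$, the exponents $\alpha,a$ forced by scale-invariance, and $R$ the integral of a weighted polynomial in $\kappa$ with no derivatives. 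From $I_1$ fixed we get $\int_{\d\Omega}\kappa^{2/3}\,ds$ fixed (and, by H\"older against $\int_{\d\Omega}\kappa\,ds=2\pi$ from Gauss--Bonnet, bounded above). One then shows, using $I_0$, $I_1$, $I_2$, the diameter bound and strict convexity, that a marked length isospectral family cannot degenerate: $0<c\le\kappa\le C$ uniformly over the family. Granting this, $R(\kappa)$ is uniformly bounded, so the fixed value of $I_2$ controls $\int_{\d\Omega}\kappa^{\,a}(\d_s\kappa)^2\,ds$ and hence, by the two-sided bounds on $\kappa$, the norm $\|\d_s\kappa\|_{L^2}$. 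Thus $\|\kappa\|_{H^1(\mathbb S^1)}$ is uniformly bounded along the family.

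Compactness now follows formally. Since $H^1(\mathbb S^1)$ embeds compactly into $C^0(\mathbb S^1)$, the curvature functions form a precompact set in $C^0$; as the arclength-parametrized boundary curve (with fixed total length $I_0$ and normalized initial position and direction) depends on its curvature with a gain of two derivatives, the domains form a precompact set in $C^2$. Finally the isospectral set is closed in $C^2$: the length of the minimizing $(p,q)$ orbit varies continuously with the domain in the $C^2$ topology, and a $C^2$-limit of isospectral domains is strictly convex by the uniform lower bound on $\kappa$, hence is an admissible billiard table with the same marked length spectrum. A precompact closed set is compact, which proves the theorem.

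The main obstacle is the structural description of $I_1$ and $I_2$ together with the non-degeneration argument. The Marvizi--Melrose/Mather coefficients are defined only implicitly --- through an interpolating Hamiltonian near the boundary, or equivalently through the Lazutkin normal form --- so extracting the leading elliptic term of $I_2$ with a nonzero coefficient, checking that its remainder involves no derivatives of $\kappa$, and then converting the finitely many fixed invariants into two-sided curvature bounds (ruling out both thin degenerations and curvature spikes) is where the real work lies; this is precisely the pattern that Conjecture \ref{main} asserts persists to all orders and that would upgrade the conclusion to $C^\infty$.
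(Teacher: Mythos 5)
Your outline matches the paper's strategy -- use the low-order Marvizi--Melrose/Mather invariants as substitutes for heat invariants, derive two-sided curvature bounds, then Sobolev bounds, then Ascoli--Arzel\`a -- but the two steps you defer as ``where the real work lies'' are precisely the content of the paper's proof, and as written your proposal does not supply them. First, the non-degeneration step. You propose to get $0<c\le\kappa\le C$ from $I_0,I_1,I_2$, the diameter and strict convexity, but the derivative-free data ($|\d\Omega|$, $\int\kappa^{2/3}\,ds$, the diameter) cannot by themselves rule out curvature spikes: one can concentrate curvature on a short arc while keeping all of these essentially fixed. The paper's Lemma \ref{kappa0} gets the two-sided bound specifically from the derivative term in $\I_2=\int(9\kappa^{4/3}+8\kappa_1^2\kappa^{-8/3})\,ds$, writing $\kappa_1^2\kappa^{-8/3}=\kappa^{-2/3}(\tfrac{d}{ds}\log\kappa)^2$ and combining a reverse H\"older estimate with Gauss--Bonnet to bound $\|\log\kappa\|_\infty$. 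Your guessed structure for $I_2$ does contain such an elliptic term, so the mechanism is available to you, but you never actually run it; and your posited form $c_1(\int\kappa^{2/3})^3$ and the prefactor $(\int\kappa^{2/3})^{\alpha}$ in $I_2$ are not what the invariants look like (they are single boundary integrals, $\I_1=\int\kappa^{2/3}\,ds$, etc., as computed in \cite{Sorr15}) -- harmless here, but a sign that the structural input is being assumed rather than established.

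Second, the regularity you reach is too low for the paper's conclusion. You stop at $\|\kappa\|_{H^1}$ bounded, which gives precompactness of the curvatures only in $C^0$ (equivalently, of the boundary curves in $C^2$ as parametrized curves). The paper's proof of Theorem \ref{cpt} bounds $\|\kappa_2\|_{L^2}$ and $\|\kappa_3\|_{L^2}$ using $\I_3$ and $\I_4$, hence $\|\kappa\|_{C^{2,1/2}}$ via Morrey, i.e.\ $C^2$ control of the curvature itself. Reaching that level is not automatic: the integrand of $\I_3$ is not manifestly positive (it contains $-144\,\kappa_1^2\kappa_2\kappa^{-5}$), and the paper must integrate by parts and complete a square with carefully tuned constants (Lemma \ref{k1}) before the $L^2$ norm of $\kappa_2$ can be extracted; an analogous argument with $\I_4$ handles $\kappa_3$. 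So even granting your interpretation of the topology, the non-degeneration lemma is missing, and under the paper's interpretation your argument is also short by two derivatives and omits the sign analysis needed to extract them.
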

	
	In particular, there are strictly positive lower bounds \red{and finite upper bounds on} the curvature within the isospectral set (see Lemma \ref{kappa0}), which prevents both asymptotic flattening \red{and the formation of corners in limiting domains}. Under the generically satisfied noncoincidence condition in \cite{MM}, this also applies to the Laplace isospectral set and provides an independent proof of the results of Melrose, Osgood, Phillips and Sarnak (see Section \ref{Connection with Laplace spectrum} below). The marked length spectrum is encoded by Mather's $\beta$-function, also called the mean minimal action, which for each rational $0 < \frac{p}{q} \leq \half$, returns $- \frac{1}{q}$ times the maximum length of periodic orbits having rotation number $\frac{p}{q}$. \red{It has a continuous extension to all $\omega \in [0,\half]$ and turns out to be regular enough that there exists a formal Taylor expansion near zero (as well as certain Diophantine rotation numbers). Its Taylor coefficients are marked length spectral invariants and the compactness in Theorem \ref{cpt} is proved using only the $\beta$-function Taylor coefficients at $\omega = 0$; therefore, compactness extends to the class of tables which are not necessarily marked length isospectral but instead have the same $\beta$-function expansion at $0$. In fact, we show in Example \ref{ex: nonisometric MM} below that there exist one-parameter families of non-isometric domains for which the corresponding $\beta$-functions have identical Taylor expansions at $0$.}
	\\
	\\
	It is well known, as a consequence of the KAM theorem, that there exist sequences of caustics with Diophantine rotation numbers in any neighborhood of the boundary (see \cite{Lazutkin}). The lengths of such caustics \red{have} an asymptotic expansion as the Lazutkin parameter $Q \to 0$ (see Definition \ref{laz}), with coefficients given by boundary integrals of algebraic functions in the curvature jet. It was shown in \cite{MM} and \cite{Amiran} that these coefficients are marked length spectral invariants and are in one-to-one algebraic correspondence with the Taylor coefficients of Mather's $\beta$-function (see also \cite{Si}). To describe their structure, we introduce the following terminology.
	
	\begin{def1}\label{diffdeg}
		\red{Let $u: \d \Omega \to \R$ be any smooth, positive function of one variable and denote its $i^{\text{th}}$ derivative in arclength coordinates by $u_i = u^{(i)}$. The \textbf{differential degree} of a polynomial in the variables $\{u^{\pm \frac{1}{3}}, u_1, \cdots, u_m\}$ is defined to be the maximum over all constituent monomials $c u^{\frac{p_0}{3}} u_1^{p_1} \cdots u_m^{p_m}$, $c \in \R$ of the quantities
		\begin{align*}
			\text{deg}_\d \left(c u^{\frac{p_0}{3}} u_1^{p_1} \cdots u_m^{p_m}\right) = \sum_{i = 0}^m i p_i.
		\end{align*}}
	\end{def1}
	
	\begin{theo}\label{main}

		\red{Let $\Gamma_Q \subset \Omega$ denote a family of convex caustics which are close to the boundary of $\Omega$, indexed by their Lazutkin parameters $Q$,} and denote by $\kappa$ the curvature of $\d \Omega$ and by $\kappa_k$ its $k^{\text{th}}$ derivative \red{with respect to the} arclength \red{parameter $s$}. \red{Then, the lengths of such caustics admit an asymptotic expansion of the form}
		\begin{align*}
			|\Gamma_{\red{Q}}| \sim |\d \Omega| + \sum_{m = 1}^{\infty} \frac{1}{m!} \I_{m} \left(\frac{3}{2}Q\right)^{2m/3},
		\end{align*}
		\red{where} $\I_m$ are \red{marked length spectral invariants, given by} integrals of curvature polynomials:
		\begin{align*}
			\red{\I_1 = - \frac{1}{2} \int_{\d \Omega} \ka^{\frac{2}{3}}(s) ds,} \qquad
			\mathcal{I}_{m} =  \int_{\d \Omega}  \mathcal{P}_m (\ka^{\pm \frac{1}{3}}, \ka_1, \cdots, \ka_{m-1}) ds, \qquad \red{(m \geq 2)}.
		\end{align*}
		Here, $\mathcal{P}_m \in \R\left[\ka^{\pm \frac{1}{3}}, \ka_1, \cdots, \ka_{m-1}\right]$ has differential degree $2m - 2$ and the highest derivatives in $\mathcal{P}_m$ appear quadratically in the form
		\begin{align*}
			\mathcal{P}_m = c_m \ka^{-4m /3} \ka_{m-1}^2 + \mathcal{R}_m,
		\end{align*}
		\red{where
		\begin{align*}
			c_m = \frac{(2m + 3)}{\pi^{2m + 2}}  m! \zr(2m+2),
		\end{align*}
		with $\zr$ being the Riemann $\zeta$-function.} $\mathcal{R}_m \in \R\left[\ka^{\pm \frac{1}{3}}, \cdots, \ka_{m-2}\right]$ \red{is a remainder of} differential degree \red{at most} $2m - 2$ which \red{depends on no more than $m-2$ derivatives of $\ka$}.
	\end{theo}
	
	The \red{coefficients} $\I_m$ are \red{called} \textbf{Marvizi-Melrose invariants} (see Definition \ref{action}), the first two of which were computed in \cite{MM} and the subsequent two in \cite{Sorr15}. \red{As a simple consequence of Theorem \ref{main}, we have the following important example of one-parameter families of nonisometric domains with the same Marvizi-Melrose invariants:}
	
	\begin{exam}[One-parameter families of non-isometric domains with the same Marvizi-Melrose invariants]
		\label{ex: nonisometric MM}
		\red{Consider the unit circle in $\R^2$. Now add two ``bumps'' with disjoint support, sufficiently $C^\infty$-small so that the resulting domain is still strictly convex. Sliding the bumps around the circular portion of the boundary in $\R^2$ so that their supports remain disjoint, it is clear that as the distance between them varies, the resulting domains will in general \textit{not} be isometric. However, Theorem \ref{main} tells us that the Marvizi-Melrose invariants $\I_k$ are integrals of local densities (algebraic functions in the jet of the curvature). Hence, the resulting non-isometric domains have \textit{identical} Marvizi-Melrose invariants.}
	\end{exam}
	
	\red{The example above shows that for an infinite dimensional family of bumps, the linearization of what one might call the ``Marvizi-Melrose operator'' $\Omega \mapsto (\I_k)_{k \in \N}$ has a nontrivial kernel. The existence of such families of domains which are indistinguishable from their Marvizi-Melrose invariants alone precludes their direct use in rigidity problems and suggests that outside of the compactness in Theorem \ref{cpt} and the extremization of invariants in \cite{MM}, there are limitations in the use of these invariants for inverse spectral problems. The same applies to the Taylor coefficients of Mather's $\beta$-function (see Definition \ref{def: MB and MM invariants} below). {The precise form in Theorem \ref{main} is not needed for this simple example. That the invariants $\I_k$ are integrals of polynomials in the jet of $\kappa$ (with coefficients which are constant multiples of powers of $\ka^{\frac{1}{3}}$) was already known from the work of Marvizi and Melrose \cite{MM} (Proposition 4.9). Nonuniqueness was also already known from the recent work of Buhovsky and Kaloshin \cite{Buhovsky}, who give a pair (but not a one-parameter family) of non-isometric domains with the same Marvizi-Melrose invariants.}}
	\\
	\\
	The main difficulty in proving Theorem \ref{main} {is} the inversion of a highly nonlinear map sending polynomials in the curvature jet to rational functions of the Taylor coefficients of an interpolating Hamiltonian. Precise calculation of the constants, which we need to be nonzero, requires tools from analytic number theory. A complete description of the invariants would involve sorting through even more terms, in particular those arising from large powers of a certain vector field when considered as a differential operator. \red{We establish a method for computing these coefficients in Section \ref{Small lambda asymptotics}, after which} \red{we transform the integrands of each Marvizi-Melrose invariant into a ``reduced form'' (see Definition \ref{def:excess differential gap reduced form}), which leads to} the structure in Theorem \ref{main}. The approach below uses a different algorithm than the one presented in \cite{Sorr15} and works for a {more general} class of dynamical systems which admit an interpolating Hamiltonian.

	\subsection{Outline}
	We begin with a literature review in Section \ref{background} and describe the connection between the length and Laplace spectra. In Section \ref{Billiards}, we review symplectic aspects of the billiard map and introduce an interpolating Hamiltonian for nearly glancing orbits. In particular, we give a formula for the integral invariants $\I_k$ in terms of it. In Section \ref{MIC}, we show how Theorem \ref{main} implies Theorem \ref{cpt}. Section \ref{Geometric and combinatorial preliminaries} deals with algebraic aspects of integration by parts and provides an algorithm for reducing the number of derivatives appearing in a polynomial in the curvature jet. We show that any such polynomial of differential degree $D$ in the jet of $\ka$ is, when multiplied by the arclength one-form $ds$, cohomologous to another one-form whose $ds$-coefficient has at most the same differential degree and whose $\ka$-derivatives {have order less than or equal to} $\lfloor D/2 \rfloor$. In Section \ref{Small lambda asymptotics}, we compute the leading order asymptotics of the billiard map near glancing directions in two different ways. One is geometric and uses curvature coordinates in Section \ref{sec: Computing AM Geometrically}. The other, in Section \ref{sec: computing AM Algebraically}, is algebraic in nature and deals with the combinatorics of large powers of a Hamiltonian vector field. Equivalently, this can be rephrased in terms of Lie series or iterated Poisson brackets. In Section \ref{Integral invariants}, we compute explicitly the Taylor coefficients of an interpolating Hamiltonian in terms of the curvature jet. This is done by finding an infinite order recursion relation for the highest derivatives, putting them into a generating function and solving an ordinary differential equation. This yields a surprising relationship with Bernoulli numbers and the Riemann zeta function. We then integrate by parts and keep track of all constants \red{to derive an explicit formula for $c_m$, again using generating functions}, which completes the proof of Theorem \ref{main} \red{and hence Theorem \ref{cpt}.} \red{After introducing more technical concepts in Sections \ref{Billiards} and \ref{Geometric and combinatorial preliminaries}, we provide a more detailed outline of the proof of Theorem \ref{main} in Section \ref{subsec: Outline of the algorithm}.}

	\tableofcontents

	\section{Background}\label{background}
	
	\subsection{Marked length spectrum}
	The marked length spectrum is a natural object to study in the context of both closed manifolds and domains with boundary. In the boundaryless case, the marked length spectrum is a function which returns for each free homotopy class, the {minimal} length of a closed geodesic belonging to that class. The unmarked length spectrum (without marking by homotopy classes or rotation number) is a much harder object to study. In either case, the natural inverse problem which arises is to determine the shape of a domain (metric, boundary curve, etc...) from knowledge of its marked or unmarked length spectrum. For planar billiard tables, both spectra are intimately related to the so-called Birkhoff-{Poritsky} Conjecture, which postulates that only ellipses have completely integrable billiard dynamics {(see Section \ref{subsec:Length spectra})}.
	
	\subsubsection{Marked length spectrum for billiards}
	In the case of {strictly convex} billiard tables, or more generally monotone twist maps, one can study the marked length spectrum through Mather's $\beta$-function (see Definition \ref{mbf}), which for rational $\omega = p/q$ gives the mean minimal action of orbits having rotation number $\omega$. It is a complete marked length spectral invariant. The first four coefficients were derived using symbolic computer algebra in \cite{Sorr15}, where one can also find a discussion of local integrability, the Birkhoff-{Poritsky} Conjecture \red{(Conjectures \ref{conj:BP1} and \ref{conj:BP2} below)} and its relationship to the regularity of Mather's $\beta$-function. It was also shown there that disks are uniquely determined by their marked length spectrum (in fact, {by} only the first two Taylor coefficients of $\beta$). {In fact,} the first four coefficients, or rather their algebraically equivalent counterparts in terms of the caustic length-Lazutkin expansion, are {shown in Section \ref{MIC} below to be} all that is needed to derive $C^2$-compactness of isospectral sets. The structure {in} Theorem \ref{main} allows us to upgrade this to $C^\infty$-compactness, which {is also} proved in Section \ref{MIC} below. The algebraic equivalence of Mather's $\beta$-function coefficients and the caustic length-Lazutkin parameter coefficients was proved in \cite{MM} {and} \cite{Amiran}; an {explicit} formula for one in terms of the other is conjectured in \cite{KK}. In \cite{GuMeCohomological}, it was shown that the unmarked length spectrum is also a symplectic invariant. There has been much recent progress on the Birkhoff-{Poritsky} Conjecture and the {inverse} marked length spectral problem for convex planar domains; see \cite{koval2025local}, \cite{KovalGevrey}, \cite{KaSo16}, \cite{KaAvDS16}, \cite{KaDSWe17}, \cite{KaHuSo18}, \cite{KaloshinZhangRationalCaustics}, \cite{KaSoHuNearlyCircular}, and \cite{Popov1994}.  \red{More recently, there have been several parallel developments for projective, symplectic, and outer (dual) billiards \cite{Tabachnikov}, \cite{TabachnikovDual}, \cite{Tabachnikovprojectivebilliards}, \cite{BaraccoBernardiNardi}, \cite{BBN}, \cite{FSV25}.} We refer the readers to the manuscripts \cite{Tabachnikov}, \cite{Treschev}, \cite{Katok}, and \cite{Si} \red{for more comprehensive surveys of the literature on convex billiards.} \red{For related results on} the marked length spectrum for chaotic billiards, see \cite{DeSimoiKaloshinLeguil1} and \cite{BalintDeSimoiKaloshinLeguil}.

	\subsubsection{Marked length spectrum for closed manifolds} \red{In the context of geodesic flows on closed Riemannian manifolds, there are various notions of the marked length spectrum. A folklore conjecture asserts that the only metrics with integrable geodesic flow on the two-torus are Liouville, i.e., those which can be written as $(f(x) + g(y))(dx^2 + dy^2)$ for some smooth functions $f$ and $g$. This is formally analogous to the Birkhoff-Poritsky Conjecture (Conjectures \ref{conj:BP1} and \ref{conj:BP2}). There has been much recent work on this conjecture \cite{ABM}, \cite{CorsiKaloshin2018}, \cite{Henheik}, \cite{HKLV}.} 	\red{Even more recently, Abbondandolo and Mazzucchelli have defined the notion of a marked length spectrum for metrics of revolution on the two-sphere and proved that isospectral metrics have conjugate geodesic flows \cite{AbbondandoloMazzucchelli}. Furthermore, they show that each marked length isospectral class has a unique $\Z_2$-symmetric representative.}
	\\
	\\
	In the context of closed manifolds with Anosov geodesic flow, it was conjectured in \cite{KatokBurns85} that the marked length spectrum uniquely determines a Riemannian metric. There have been several recent advances in this direction; see for example \cite{GuillarmouLefeuvre}, \cite{GLP25} and \cite{KarenButt}. There is also a nice survey by Amie Wilkinson on the subject \cite{wilkinson2012lectures}. It was shown in \cite{Vigneras} that \red{\textit{unmarked}} length isospectral surfaces need not be isometric.

	\subsection{Laplace spectrum}
	Dual to the length spectrum is the Laplace spectrum, which consists of eigenvalues of the Laplace-Beltrami operator:
	\begin{align}\label{PSF}
		\begin{cases}
			- \Delta u = \lambda^2 u,\\
			Bu = 0.
		\end{cases}
	\end{align}
	Here, $B$ is a boundary operator encoding Dirichlet, Neumann, Robin or mixed boundary conditions. If the manifold is closed, there is of course no boundary operator needed. The connection with the length spectrum is given by the \textit{Poisson relation}:
	\begin{align}\label{PoissonRelation}
		\operatorname{SingSupp} \,\,\tr \left(\cos t \sqrt{- \Delta}\right) \subset \overline{\lsp(\Omega)} \cup \Z |\d \Omega|,
	\end{align}
	where the left-hand side is the singular support of the even wave trace and the right-hand side is the closure of the (unmarked) length spectrum (Definition \ref{def:LSP}). The wave trace is to be interpreted in the sense of distributions. This beautiful formula was first derived by Poisson for flat tori, where it reduces to basic Fourier analysis. It was later studied in the context of closed hyperbolic surfaces, in which case one has the Selberg trace formula. This was further generalized by Duistermaat and Guillemin in their celebrated work \cite{DuGu75}, extending the trace formula to arbitrary smooth, closed manifolds. For domains with boundary, the Poisson {relation} was first introduced by Anderson and Melrose \cite{AndersonMelrose}, {with a trace formula derived} later by Guillemin and Melrose \cite{GuMe79b}. Whether or not the inclusion can be made strict has been the subject of much recent speculation. In \cite{KKV} and \cite{KVSilentOrbits25}, together with Vadim Kaloshin and Illya Koval, we show that within a finite degree of regularity, the inclusion can be made strict for Birkhoff billiard tables. For a residual set of boundaries, the length spectrum is simple and all periodic orbits are nondegenerate {(\cite{PeSt17}, Theorems 6.2.3 and 6.4.1)}, which prevents these types of cancellations and implies an \textit{equality} in \eqref{PoissonRelation}.
	\\
	\\
	\begin{figure}
		\begin{center}
			\includegraphics[scale = 0.4]{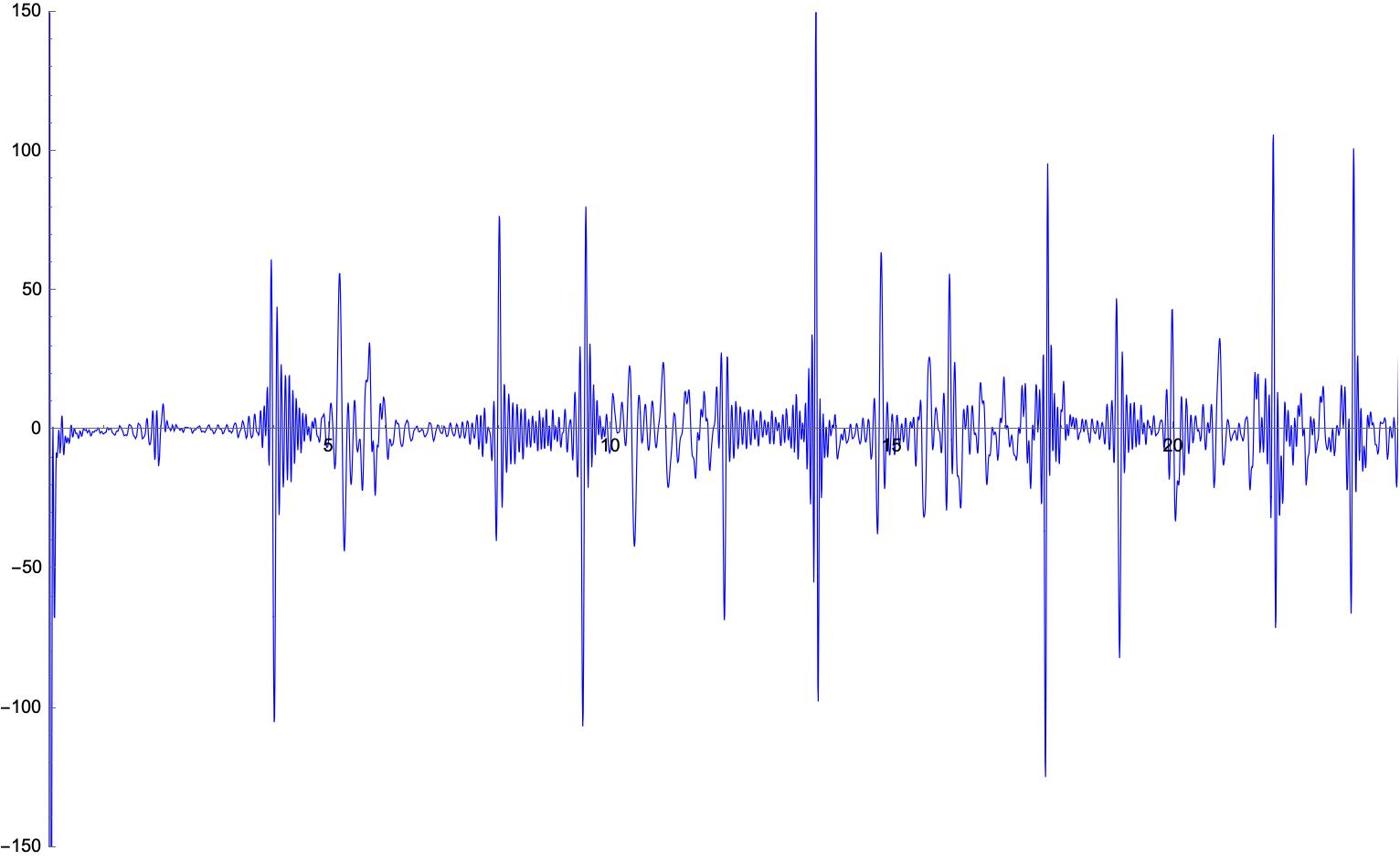}
		\end{center}
		\caption{\red{A plot of the truncated wave trace $\tr \cos t \sqrt{-\Delta}$ for the Laplacian on the unit disk in $\R^2$ with Dirichlet boundary conditions using the first $650$ eigenvalues, computed from zeroes of Bessel functions. The variable $t$ along the horizontal axis is time, and the spikes correspond to the lengths of periodic billiard orbits. Notice that the first spike appears to be centered at $t = 4$, which corresponds exactly to the length of a minimal bouncing ball (period $2$) orbit along the diameters.}}
		\label{fig: Poisson relation 650}
	\end{figure}
	Compactness of the Laplace isospectral set was first studied by Melrose for smooth planar domains in \cite{MIC}, using the algebraic structure of heat invariants. One advantage of that paper is that no convexity was assumed, but consequently the precompactness derived there did not exclude the possibility of domains degenerating to a pinched, nonsmooth ``dumbbell'' within the isospectral set. This was later addressed by Osgood, Phillips and Sarnak in \cite{POS2} and \cite{POS1}, where compactness was proven for both closed surfaces and domains with boundary. The approach in those papers was via an analysis of the spectral zeta function, similar to the Selberg zeta function. One important feature of Theorem  \ref{cpt} in this paper (see also Lemma \ref{kappa0}) is that it also excludes the possibility of degeneration even within the class of strictly convex domains; i.e. there are uniform positive lower bounds \red{and finite upper bounds} on the curvature within any marked length isospectral set. Extensions of this compactness to isospectral (and isoresonant) potentials can be found in \cite{Donnelly05} and \cite{Hislop18}.
	\\
	\\
	One curiosity is that there does not seem to be an existing holomorphic analogue of the dynamical zeta function in the context of Birkhoff billiards, nor a $\beta$-type function (or mean minimal action, see Definition \ref{mbf}) of a single variable in the context of closed manifolds. If one could prove compactness in the Anosov case, then by the results of \cite{GLP25}, \cite{GuillarmouLefeuvre}, one would obtain finiteness of the marked length isospectral set. Similarly, if rigidity could be proved in the planar billiards case via some kind of zeta function, one would obtain finiteness of the marked length isospectral set. However, Example \ref{ex: nonisometric MM} above and the results in \cite{Buhovsky} show that the {Taylor} coefficients of the mean minimal action {at zero} do not uniquely determine a billiard table. 
	\\
	\\
	Again in the context of strictly convex, smooth billiard tables, Marvizi and Melrose showed, under the noncoincidence condition that $|\d \Omega|$ is not a limit point from below of the lengths of periodic orbits which have winding number at least two, that the coefficients of Mather's $\beta$-function are also Laplace spectral invariants. They introduced a new family of integral invariants via a so-called interpolating Hamiltonian, which are equal (up to nonzero universal multiplicative constants) to the caustic length-Lazutkin parameter expansion coefficients in Theorem \ref{main} and are amenable to direct computation. The noncoincidence condition is known to hold for a residual, in particular, dense, set of domains in the $C^\infty$-topology on boundary curvatures, including $C^1$-open neighborhoods of disks, ellipses and analytic domains. Using the first two invariants, they constructed a two-parameter family of spectrally determined domains within this class. One family of spectrally determined domains has curvature function given by an elliptic integral, which is tantalizingly close to being that of an ellipse. For more on the subject of determining a convex billiard table from its Laplace spectrum, we refer the reader to the surveys \cite{ZelditchSurvey2} and \cite{ZelditchSurvey2014}. Recent results on hearing the shape of a drum can be found in \cite{HeZe19}, \cite{Vig21}, and \cite{Zelditch4}.

	\section{Billiards}\label{Billiards}
	
	Denote by $\Omega$ a bounded and strictly \footnote{\red{Strictly speaking, a domain can have flat boundary at some points and still be strictly convex \textit{as a set} in $\R^2$. Some authors instead refer to domains with strictly positive curvature as being \textit{strongly} convex, reserving \textit{strictly} convex for domains with nonnegative curvature. We follow the technically incorrect convention in the field by referring to such domains as strictly convex.}} \red{(strongly)} convex region in $\R^2$ with smooth boundary. This means that the curvature of $\d \Omega$ is a strictly positive function. \red{Such domains are called \textbf{Birkhoff billiard tables}; we will denote the set of all Birkhoff billiard tables by $\mathcal{B}$.} The billiard map is defined on the coball bundle of the boundary $B^* \d \Omega = \{(x, \xi) \in T^*\d \Omega : |\xi| < 1\}$, which can be identified with the inward or outward parts of the circle (cosphere) bundle $S_{\d \Omega}^* \R^2$, via the natural orthogonal projection maps. \red{By means of an arclength parametrization $x: (\R/ \ell \Z)_s \to \R^2$ of the boundary,} we can also identify $B^* \partial \Omega$ with $({\R}\slash {\ell \Z})_s \times (0, \pi)_\phi$ where $\ell=|\partial \Omega|$ is the length of the boundary and $\phi$ is the angle made by an inward $(+)$ or outward $(-)$ pointing unit covector $\wh \xi_\pm \in S_{x(s)}^* \R^2$ and the positively oriented tangent line at the point $x(s)$. Instead of $(s,\phi)$, we will use symplectic coordinates $(s, \sigma)$ with $\sigma = \cos \phi$ so that $\xi d x =  \sigma ds$. \red{Equivalently, $\wh \xi_\pm \cdot x'(s) = \cos \phi = \sigma$ is the symplectic dual variable to $s$ (see Figure \ref{Billiard Table}).}
	
	\begin{def1} \label{def:dtpm billiard maps}
		If $(x,\xi) \in B^* \d \Omega$ is mapped to the inward $(+)$ (resp. outward $(-)$) pointing covector $(x, \wh \xi_\pm) \in S_{\d \Omega}^* \R^2$ (the unit circle bundle over the boundary) under the inverse projection map, we define the \textbf{billiard maps} to be
		$$
		\delta_{\pm}(x,\xi) = (x^\pm,\xi^\pm),
		$$
		where ${(x^\pm,\xi^\pm)}$ is the projection onto the coball bundle of the parallel transported unit covector $\wh \xi_\pm$ along the line containing $x$ in the direction of $\pm \wh \xi_\pm$ at the subsequent intersection point with $\d \Omega$. The maps $\delta_{\pm}^n$ are defined via iteration and it is clear that \red{$\delta_\pm^{n} = (\delta_\mp^{n})^{-1}$} for each $n \in \Z$. See Figure \ref{Billiard Table}.
	\end{def1}
	
	\red{We will use the natural symplectic coordinates $(s,\sigma)$ on $B^* \d \Omega$ and by an abuse of notation, continue to write $\delta_\pm(s, \sigma)$ for the billiard maps $(s,\sigma) \mapsto (s^{\pm}, \sigma^{\pm})$ in these coordinates.} A point $\red{P = (s, \sigma) \in \R / \ell \Z \times (-1,1)}$ is called $q$-periodic ($q \geq 2$) if $\dt_\pm^q(P)=P$. We define the \textbf{rotation number} of a $q$-periodic orbit $\gamma$ to be $\omega(\gamma)= \frac{p}{q}$, where $p$ is the winding number of $\gamma$ which we now define. There exists a unique lift ${\wt \delta_{+} }$ of the map ${\dt_+}$ to the closure of the universal cover $\R_{\wt s} \times [-1,1]_{\wt \sigma}$ which is continuous and satisfies
		\begin{itemize}
			\item ${\wt \dt_+}(\wt s + \ell, \wt \sigma) = {\wt \dt_+}(\wt s, \wt \sigma) + (\ell,0),$
			\\
			\item $ {\wt \dt_+} (\wt s, 1) = (\wt s, 1)$,
			\\
			\item $\wt \dt_+ (\wt s, -1) = (\wt s + \ell, -1)$,
			\\
			\item ${\wt \dt_-} =  {\wt \dt_+^{-1}}$.
		\end{itemize}
	Given this normalization, for any point $(s,\sigma) \in \R/\ell \Z \times [-1,1]$ belonging to a $q$-periodic orbit of $\dt_+$, we see that ${\wt \dt_+ }^q(\wt s, \wt \sigma) = (\wt s + p \ell, \wt \sigma)$ for some $p \in \Z$. This $p$ is defined to be the winding number of the orbit $\gamma$ generated by $(s,\sigma) \in \overline{B^* \d \Omega}$. \red{For orbits $(s_q)_{q \in \Z}$ which are not periodic, we can sometimes still define a rotation number by
		\begin{align}
			\label{eq: general rotation number}
			\omega = \lim_{q \to \infty} \frac{\wt s_q}{\ell q},
		\end{align}
		for any lift $(\wt s_q)_{q \in \Z}$ of the orbit to $\R$, assuming that this limit exists. Orbits of a fixed winding number $p$ and period $q$ are said to be of \textbf{type-$(p,q)$}.}

	\begin{def1}
		\red{The set $\{\sigma = \pm 1\} \subset \overline{B^* \d \Omega}$ is called the \textbf{glancing set.} Billiard orbits} which are nearly tangent to the boundary, having qualitatively small rotation number (depending on the context) are called \textbf{nearly glancing}.
	\end{def1}
	
	\begin{rema}
		\red{The terminology ``nearly glancing'' comes from the theory of glancing, gliding, and grazing rays, which are orbits for the associated billiard \textit{flow} in a domain with boundary, whether it is convex or not (see \cite{MelroseTaylor}). Inside of a convex billiard table, there are no glancing rays, but orbits of rotation number $1/q$ for large $q$ are \textit{approximations} of {glancing} rays in the ambient space $\R^2$.}
	\end{rema}
	
		\begin{figure}
			\centering
			\includegraphics[scale = 1]{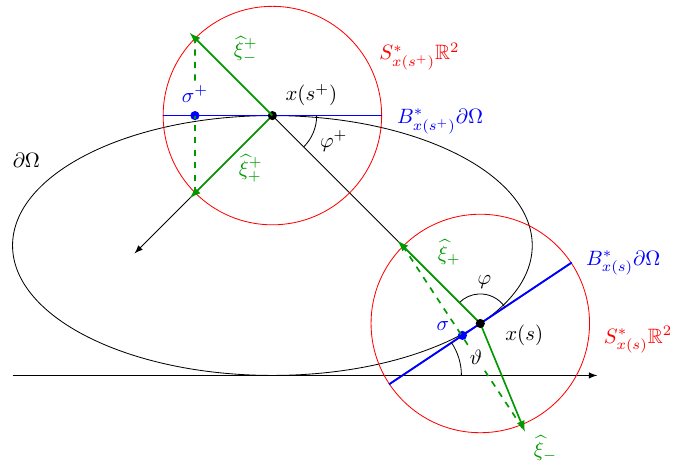}
			\caption{\red{With respect to the symplectic coordinates $(s, \sigma)$,} the billiard map $\delta_{+}$ sends $(s, \sigma) \mapsto (s^+, \sigma^+)$ and preserves the symplectic form $d\sigma \wedge ds$ on \red{the coball bundle} $B^*\d \Omega$, which is the (co)tangential projection of $S_{\d  \Omega}^*\R^2$ (red) \red{onto the cotangent bundle $T^* \d \Omega$.} \red{The fibers of $B^*\d \Omega$ at $x(s)$ and $x(s^+)$ are colored in blue}. \red{The inward and outward pointing covectors $\wh \xi_+$ and $\wh \xi_-$, both in green, project to $(x(s), \xi)$, where $\xi$ is the boundary covector corresponding to $\sigma = \cos \phi$. The pair $(x,\xi)$ (equivalently $(s,\sigma)$) is then mapped to $(x^+, \xi^+)$ (or $(s^+, \sigma^+)$), which corresponds via the inverse projection to the inward (resp. outward) pointing covector $\wh \xi_+^+$ (resp. $\wh \xi_-^+$), also in green. The angle $\theta$ made between the horizontal axis and the tangent line to $\d \Omega$ at $x(s)$ is called a \textit{curvature coordinate} and is described in more detail in Section \ref{curvecoord}.}}
			\label{Billiard Table}
		\end{figure}

	\subsection{Properties of the billiard map:}

	Again \red{denote by} $x: \R/\ell \Z \to \R^2$ an arclength parametrization of the boundary, then there exists a generating function $h$ for $\delta_+$: 
	\begin{align}\label{eq: generating functions}
		h(s, s^+) = - |x(s) - x(s^+)|.
	\end{align}
	If $x(s), x(s^+) \in \d \Omega$ are connected by a straight line making angles $\phi, \phi^+$ with the tangent lines at $x(s)$ and $x(s^+)$ respectively, then
	\begin{align*}
		\begin{cases}
			\d_s h =  \cos \phi = \sigma,\\
			\d_{s^+} h = - \cos \phi^+ = - \sigma^+.
		\end{cases}
	\end{align*}
	Here we tabulate some important properties of the billiard map. {We focus on $\dt_+$, but analogous statements apply to $\dt_-$.}
	\begin{itemize}
		\item $\dt_\pm$ is \textbf{exact symplectic}, meaning that it preserves the $2$-form ${d(\sigma ds) =} -\sin \phi d \phi \wedge ds$ and

		\begin{align}
			\label{eq: exact symplectic}
			\sigma^+ ds^+ - \sigma ds = - dh(s, s^+).
		\end{align}

		\item $\dtpm$ is {smooth} on $B^* \d \Omega$ and extends continuously up to the boundary, with square-root-type singularity there.
		
		\item $\dt_+$ satisfies the \textbf{monotone twist condition}:

		\red{$$
		\frac{\d s^+}{\d \sigma} < 0.
		$$
		In other words, for a fixed $s$, decreasing $\sigma$ (equivalently, increasing $\phi$), causes the second impact point to wind monotonically clockwise around $\d \Omega$, resulting in a \textit{twist} of the phase space $B^* \d \Omega$. In view of \eqref{eq: exact symplectic}, this is equivalent to
		$$
		\frac{\d^2 h}{\d s \d s^+} < 0.
		$$
		}
		The \textbf{twist interval} of $\dt_+$ is $[0,1]$, coming from the formulas
		\red{
		\begin{align*}
			0 =& \frac{\pi_1 (\wt \dt_+(\wt s, 1)) - \wt s}{\ell},\\
			1 =& \frac{\pi_1 (\wt \dt_+(\wt s, -1)) -\wt s}{\ell}.
		\end{align*}
	}
		\item Billiard orbits \red{$(\cdots, \dt_+^{-1}(s, \sigma), (s,\sigma), \delta_{+}(s, \sigma), \dt_+^2(s, \sigma), \cdots)$} correspond to critical points of the action functional
		\begin{align*}
			\sum_{i \in \Z} h(s_i, s_{i+1}),  \qquad (s_j, \sigma_j) = \dt_+^j(s, \sigma),
		\end{align*}
		in the sense that the points $s_i$ are critical on each finite segment with fixed endpoints, starting at $s_{N}$ and terminating at $s_{M}$ for any $N, M \in \Z$.
	\end{itemize}

	\subsection{Elements of Aubry-Mather theory}
	\red{The existence of periodic orbits for any given rational rotation number was postulated by Poincar\'e and proved by} Birkhoff \cite{PoincareSuruntheoremedegeometrie}, \cite{BirkhoffPoincare}. \red{Refinements and generalizations of this led to the development of Aubry-Mather theory, which we now describe; it will be referenced freely throughout the paper. For more details, we refer the reader to \cite{Aubry}, \cite{Mather}, \cite{MatherForni} and \cite{BirkhoffSurfaceTransformations}.}
	\begin{theo}[\cite{Si}, Theorem 1.3.4]
		\label{thm:Siburg Lipschitz graph}
		A monotone twist map\footnote{\red{Monotone twist maps are generalizations of the billiard map to other $C^1$-diffeomorphisms of a cylinder $\T \times [a,b]$, where $[a,b]$ is the twist interval defined analogously to the formula above; see Definition 1.1.1 on pg. 2 of \cite{Si} for more details.}} possesses minimal orbits for every rotation number in its twist interval; for rational rotation numbers, there are always at least two periodic orbits. Every minimal orbit lies on a Lipschitz graph over the $s$-axis. Moreover, if there exists an invariant circle, then every orbit on that circle is minimal.
	\end{theo}
	See also \cite{BialyPolterovich} for generalizations to higher dimensions.
	
	\begin{def1}\label{mbf}
		\textbf{Mather's $\beta$-function}, also called the \textbf{mean minimal action}, is the function
		\begin{align*}
			\beta(\omega) = \lim_{N \to \infty} \frac{1}{2N} \sum_{i = -N}^{N-1} h(s_i, s_{i+1}),
		\end{align*}
		for any minimal orbit $(s_i)_{i \in \Z}$ \red{of rotation number $\omega$}.
	\end{def1}
	
	\begin{rema}
		Note that $\beta$ is well-defined, since any minimal orbit \red{of a given rotation number} has the same action by definition.
	\end{rema}

	\begin{theo}[Theorems 1.3.7 and 3.2.5 in \cite{Si}]\label{aubmath}
		Let $f$ be a monotone twist map and $\beta$ its mean minimal action. The following hold true:
		\begin{enumerate}
			\item $\beta$ is strictly convex; in particular it is continuous.
			
			\item $\beta$ is differentiable at all irrational numbers.
			
			\item If $\omega = p/q$ is rational, $\beta$ is differentiable at $\omega$ if and only if there is an $f$-invariant circle of rotation number $p/q$ consisting entirely of periodic minimal orbits.
			
			\item If $\Gamma_\omega$ is an $f$-invariant circle of rotation number $\omega$, then $\beta$ is differentiable at $\omega$ with\footnote{\red{See theorem \ref{thm: caustic length lazutkin expansion} below for a geometric interpretation of both the invariant circle and the integral over $\Gamma_\omega$ of the canonical one-form.}}
			$$
			\beta'(\omega) = - \int_{\Gamma_\omega} \sigma ds.
			$$
		\end{enumerate}
		If $f = \dt_+$ is the billiard map on a Birkhoff billiard table, then the following also hold:
		\begin{enumerate}
			\setcounter{enumi}{4}
			\item $\beta$ is symmetric about the point $\omega = 1/2$.
			
			\item $\beta$ is three times differentiable at the boundary points with $\beta'(0) = - \ell = - |\d \Omega|$.
		\end{enumerate}
	\end{theo}
	
		\red{We will also need the following property of Mather's $\beta$-function in the proof of Theorem \ref{cpt} (see Proposition \ref{prop: MLIS is closed} below):
		
		\begin{prop} \label{prop: Lipschitz dependence of beta}
			Let $f_1, f_2$ be $C^1$ monotone twist maps on the circle $\T = \R/\Z$ with twist interval $[\omega_-, \omega_+]$ and generating functions $h_1, h_2$. Denote by $\beta_{h_i}$ the corresponding minimal actions as in Definition \ref{mbf}. We then have
			\begin{align*}
				\|\beta_{h_1} - \beta_{h_2}\|_{C^0([\omega_-, \omega_+])} \leq \|h_1 - h_2\|_{C^0(\T^2)},
			\end{align*}
			i.e., the dependence of Mather's $\beta$-function on the underlying generating function is \textit{Lipschitz} and in particular, continuous.
		\end{prop}
	}
		
		\begin{proof}
			\red{Fix a rational $p/q \in [\omega_-, \omega_+]$ and consider the \textbf{$q$-cyclic action functionals:}
			\begin{align*}
				H_q^{(j)}(s_1, \cdots, s_q) = \sum_{i = 1}^q h_j(s_i, s_{i+1}), \qquad j = 1,2,
			\end{align*}
			with the convention that $s_{q+1} = s_1$. Now consider any lift $s \to \wt s$ of $\T$ to $\R$ as in the discussion preceding \eqref{eq: general rotation number} and define the $q$-cyclic \textit{lifted} action functionals
			\begin{align*}
				\wt H_q^{(j)}(\wt s_1, \cdots, \wt s_q) = \sum_{i = 1}^q \wt h_j(\wt s_i, \wt s_{i+1}), \qquad j = 1,2,
			\end{align*}
			where $\wt h_j(\wt s_i, \wt s_{i+1}) = h_j(\pi(\wt s_i), \pi(\wt s_{i+1}))$ (with $\pi: {\R \to \T}$ being the quotient map) are $\Z^2$-periodic lifts of the generating functions. Denote by  $C_{p,q}$ the configuration space for orbits of type-$(p,q)$:
			\begin{align*}
				C_{p,q} = \{ (\wt s_1, \cdots, \wt s_q) \in \R^q : \wt s_1 < \wt s_2 < \cdots < \wt s_q, \quad \wt s_{i+q} = p + \wt s_i \}.
			\end{align*}
		Birkhoff's proof of Poincar\'e's geometric theorem showed that minima of $\wt H_q^{(j)}$ are attained away from the boundary (where $s_m = s_n$ for some $m \neq n$) and their projections to $\T^q$ correspond to type-$(p,q)$ minimal orbits of $f_j$  \cite{PoincareSuruntheoremedegeometrie}, \cite{BirkhoffPoincare}. Observe that
		\begin{align*}
			\beta_{h_j}(p/q) = \frac{1}{q} \min_{C_{p,q}} \wt H_q^{(j)}(\wt s_1, \cdots, \wt s_q).
		\end{align*}
		It then follows that
		\begin{align}\label{eq: Lipschitz on rationals}
			\left|\beta_{h_1}(p/q) - \beta_{h_2}(p/q) \right| \leq \|h_1 - h_2\|_{C^0(\T^2)}.
		\end{align}
		Since each $\beta_{h_j}$ is strictly convex and hence continuous, it follows that \eqref{eq: Lipschitz on rationals} extends from rational rotation numbers to all $\omega \in [\omega_-, \omega_+]$, which proves the proposition.}
		\end{proof}

	\subsection{Length spectra}
	\label{subsec:Length spectra}
	
	\begin{def1} \label{def:LSP}
		The \textbf{length spectrum} of $\Omega$ is
		$$
		\lsp(\Omega) = {\cup_{\gamma \,\, \text{periodic}} \left\{ \text{length}\,(\gamma)\right\}} \cup \N |\d \Omega|.
		$$
		The \textbf{marked length spectrum} is defined by
		$$
		\mls_\Omega \left(\frac{p}{q}\right) = \max \left\{ \len(\gamma) : \omega(\gamma) = p/q \right\},
		$$
		where $p,q$ are relatively prime and $p/q \in (0,1/2]$.
	\end{def1}
	
	It follows immediately that
	\begin{align*}
		- \beta\left(\frac{p}{q}\right) = \frac{1}{q} \mls_\Omega\left(\frac{p}{q}\right).
	\end{align*}
	
	\begin{rema}
		Notice that maximal lengths correspond to orbits of minimal action when considering the generating function $h(s, s^+)$ as above. The marked length spectrum ``marks'' lengths of minimal orbits by their rotation number, which plays the role of a homotopy or homology class in the closed manifold setting.
	\end{rema}

	\red{The famous Birkhoff-\red{Poritsky} Conjecture can be reformulated in terms of Mather's $\beta$-function. To explain this further, we introduce two different notions of ``integrability.''}
	
	\begin{def1}
		\red{Let $\Omega$ be a strictly convex billiard table. Its corresponding billiard map is said to be
			\begin{itemize}
				\item (locally) \textbf{Liouville-Arnold integrable} if there exists some open neighborhood in phase space $B^* \d \Omega$ which is foliated by caustics which are the level sets of some nonconstant function (a first integral). If this neighborhood can be taken to be the interior of the entire phase space, we say that the billiard table is \textit{globally} Liouville-Arnold integrable.
				
				\item (locally) \textbf{rationally integrable} if there exists an open set $U \subset (0,1/2)$ such that for each rational $p/q \in U$, there exists a rational caustic of rotation number $p/q$. If the set $U$ is of the form $(0,\eps)$, we say that the billiard map is rationally integrable near the boundary. If $U = (0,1/2)$, we say that the billiard table is globally rationally integrable.
		\end{itemize}}
	\end{def1}
	
	\begin{conj}[\textbf{Birkhoff-Poritsky}, Liouville-Arnold integrability \cite{Birkhoff}, \cite{Poritsky}]
		\label{conj:BP1}
		\red{If a billiard table is locally Liouville-Arnold integrable, then it must be an ellipse.}
	\end{conj}
	
	\red{Poritsky's formulation of the conjecture characterizes integrability locally in the Liouville-Arnold sense. As for global integrability, Bialy has shown that the only globally Liouville-Arnold integrable billiard table is a disk \cite{Bialy}. In view of Theorem \ref{aubmath}, an alternate formulation of the Birkhoff-Poritsky Conjecture using rational integrability is:}
	
	\begin{conj}[\textbf{Birkhoff-Poritsky}, rational integrability \cite{Birkhoff}, \cite{Poritsky}]
		\label{conj:BP2}
		\red{Mather's $\beta$-function is smooth on some open set if and only if the underlying billiard table is an ellipse.}
	\end{conj}
	\red{Let us also mention that continuous deformations of a domain which are \textit{unmarked} length isospectral are also marked length isospectral (see e.g., Corollaries 3.2.3 and 3.2.7 in \cite{Si}). Further recent progress towards the Birkhoff-Poritsky Conjecture can be found in \cite{KaSo16}, \cite{KaDSWe17}, \cite{KaAvDS16}, \cite{KaSoHuNearlyCircular}, \cite{BialyMironov}, and \cite{KKZ}, \cite{koval2025local}, \cite{KovalGevrey}.}
	\\
	\\
	As above, we will continue to denote by $\mathcal{B}$ the space of all Birkhoff billiard tables. Since we are working with a fixed domain $\Omega$, we will denote by $\M = \M(\Omega) \subset \mathcal{B}$ the marked length isospectral set containing $\Omega$. More generally, one could consider the entire moduli space $\wt{\M}$ of all strictly convex billiard tables quotiented by the equivalence relation of marked length isospectrality.
	\\
	\\
	\red{We now introduce a family of spectral invariants.
	\begin{def1}
		Given a set $\mathcal A$, an $\mathcal A$-valued \textbf{marked length spectral invariant} is a function $c: \mathcal{B} \to \mathcal A$ which is constant on $\M(\Omega)$ for each $\Omega \in \mathcal{B}$; equivalently, $c$ descends to a well-defined function on $\wt \M$. 
	\end{def1}

	\begin{exam}\label{exampleinvariant}
		In view of Theorem \ref{aubmath}, Mather's $\beta$-function is a marked length spectral invariant.
	\end{exam}

	In \cite{MM}, Marvizi and Melrose introduce another family of algebraically equivalent marked length spectral invariants; for each winding number $p$, the length spectrum can be decomposed into a union over $q \geq 2$ of clusters of lengths corresponding to orbits of rotation number $p/q$ (see Figure \ref{cartoon}). We denote these clusters by $[t_{p,q}, T_{p,q}]$, where
	\begin{align}\label{eq:tpqTpq}
		\begin{split}
			t_{p,q} = &\red{\min}_{\gamma \text{ of type-} (p, q)} \len(\gamma),\\
			T_{p,q} =& \red{\max}_{\gamma \text{ of type-} (p, q)} \len(\gamma).
		\end{split}
	\end{align}
	Using a so-called ``interpolating Hamiltonian'' (see Theorem \ref{thm: interpolating hamiltonian}), whose existence is a consequence of Melrose's equivalence of glancing hypersurfaces \cite{MelroseGlancingHypersurfaces1}, \cite{MelroseGlancingHypersurfaces2}, \cite{MelroseTaylor}, they show that there exist constants $c_{p,k}$ such that
	\begin{align}
		T_{p,q} \sim p \ell + \sum_{k = 1}^\infty c_{p,k} q^{-2k},
		\label{MMcpk}
	\end{align}
	and {$T_{p,q} - t_{p,q} =O(q^{-\infty})$, i.e., $T_{p,q} - t_{p,q} =O_k(q^{-k})$ for each $k \in \N$}. It follows immediately that Mather's $\beta$-function has a formal Taylor expansion at $\omega = 0$ (see also \cite{Poschel} and \cite{CarminatiMarmiSauzinSorrentino}).
	
	\begin{rema}
			For an ellipse, $t_{p,q} = T_{p,q}$ for all $0 < p/q < 1/2$, corresponding to the one-parameter families of orbits tangent to confocal conic sections. In fact, this is true whenever there is a rational caustic of rotation number $p/q$ \red{(see Section \ref{subsec: Caustics})}.
	\end{rema}
	
	\begin{rema}
		\red{Despite their appearance in the expansion of Mather's $\beta$-function, existence of such asymptotics in the length spectrum was already implicit in Lazutkin's construction of invariant curves and normal form coordinates \cite{Lazutkin}. However, it was Marvizi and Melrose who first wrote down such an explicit asymptotic expansion and computed their first two coefficients \cite{MM}. Their use of an interpolating Hamiltonian provided a new framework for studying convex billiards. A geometric interpretation of these invariants was provided in \cite{Amiran} and the interpolating Hamiltonian approach was later expanded upon by Kovachev and Popov in \cite{kovachev1990invariant}.}
	\end{rema}
	}
	
\begin{figure}
	\centering
	\includegraphics{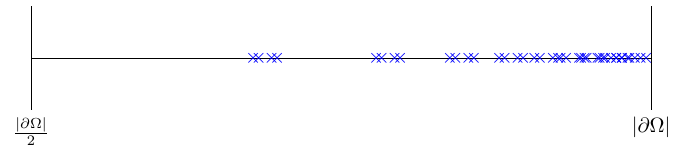}
	\caption{An illustration of what bands $[t_{1,q}, T_{1,q}]$ and clusters in the length spectrum corresponding to $1/q$ orbits might look like.}
	\label{cartoon}
\end{figure}

	\red{	
	\begin{def1}\label{def: MB and MM invariants}
		The coefficients in the formal Taylor expansion
			\begin{align*}
				\beta(\omega) \sim \sum_{k = 1}^\infty \beta_k \omega^k
			\end{align*}
		of {Mather's $\beta$-function} are real valued marked length spectral invariants which we call \textbf{$\beta$-coefficients}. They are related to the invariants $c_{p,k}$ of Marvizi and Melrose by the formula
	\begin{align}\label{bettaylor}
		\begin{split}
			\beta_{\red{2k +1}} = & - \frac{1}{p^{2k+1}} c_{p,k}, \qquad k \geq 1
			\\
			\beta_1 =& - \ell, \qquad \beta_{2k} = 0.
		\end{split}
	\end{align}
	\red{Regarding the nomenclature, some authors have called $\beta_k$ (equivalently, $c_{p,k}$) \textit{Marvizi-Melrose invariants}, but for clarity, we propose to reserve this term for the invariants $\I_k$ in Theorem \ref{main} (see Definition \ref{action} and Theorem \ref{thm: caustic length lazutkin expansion} below).} In Definition \ref{def: MB and MM invariants} above, we have used the notation $\sim$ to denote a formal asymptotic expansion which does not necessarily converge but nonetheless identifies uniquely defined coefficients. We will continue to use this notation throughout the paper.
	\end{def1}
}
	
	\subsection{Connection to the Laplace spectrum}\label{Connection with Laplace spectrum}
	Under generic conditions (the noncoincidence condition), the asymptotics of these lengths were shown to also be Laplace spectral invariants.
	
	\begin{def1}
		A domain $\Omega$ is said to satisfy the noncoincidence condition if there exists an $\eps > 0$ such that
		\begin{align}\label{NCC}
			(|\d \Omega| - \eps, |\d \Omega| ] \cap  \lsp_{p,q}(\Omega) = \emptyset,
		\end{align}
		for all $p \geq 2$ and $q < \infty$, where $\lsp_{p,q}(\Omega)$ is the portion of the length spectrum arising from periodic orbits of type-$(p,q)$.
	\end{def1}
	
	\noindent In this case, it is shown in \cite{MM}, that the endpoints
	\begin{align*}
		t_{1, q} =& \min_{\gamma \text{ of type-} (1, q)} \text{length}(\gamma),\\
		T_{1, q} =& \max_{\gamma \text{ of type-} (1, q)} \text{length}(\gamma)
	\end{align*}
	belong to the singular support of the wave trace $\Tr \cos t \sqrt{- \Delta}$, i.e. an \textit{equality} in the Poisson relation \eqref{PoissonRelation} \red{at the endpoints of the clusters in $[t_{1,q},T_{1,q}]$ near the length of the boundary. In fact, this was improved in \cite{HeZe19} (Theorem 1.4 and Remark 6.2), where it is shown that under the noncoincidence condition,} 
	\begin{align*}
		\red{
		\operatorname{SingSupp} \left(\tr \cos t \sqrt{-\Delta}\right) \cap (|\d \Omega| - \eps, |\d \Omega| ] = \lsp(\Omega) \cap (|\d \Omega| - \eps, |\d \Omega| ],
		}
	\end{align*}
	\red{where $\eps$ is as in \eqref{NCC}; in other words, the wave trace is singular at \textit{each} length in the interval $[t_q, T_q]$, not just the endpoints.} This follows from a version of stationary phase due to Soga which applies to oscillatory integrals with degenerate phases \cite{Soga}. Hence, invariants of the distribution of these lengths are also Laplace spectral invariants amongst domains satisfying the noncoincidence condition. As mentioned in the introduction, this class is \red{residual (in particular, dense)} amongst $C^\infty$-Birkhoff billiard tables and includes ellipses, analytic domains, and a $C^1$-open neighborhood of disks. An explicit formula for the wave trace near such orbits is given in \cite{Vig22}.
	\\
	\\
	\red{The importance of the noncoincidence condition \eqref{NCC} is two-fold. First, it prevents cancellations in the wave trace which could render it smooth, i.e., a strict inclusion in the Poisson relation \eqref{PoissonRelation}. This would make such lengths inaudible from the Laplace spectrum; examples of such pathological domains (exhibiting cancellation of arbitrarily many terms in the wave trace) were constructed recently in joint work with Kaloshin and Koval \cite{KKV}, \cite{KVSilentOrbits25}. Secondly, the noncoincidence condition allows us to examine the distribution of only \textit{polygonal} lengths near the perimeter $|\d \Omega|$, i.e. those which come from orbits of rotation number $1/q$. If $p \geq 2$ and there were an accumulation of type-$(p,q)$ orbits at $|\d \Omega|$, it would be impossible to decouple the unmarked length spectrum in a way which singles out only polygonal orbits, preventing us from reading off the coefficients $c_{1,k}$.}
	\\
	\\
	By extremizing the first two invariants in \eqref{MMcpk} (or rather, their algebraically equivalent counterparts $\I_1$ and $\I_2$ in Theorem \ref{main}; see Definition \ref{action} and Proposition \ref{prop: Kovachev Popov coordinates and caustic length lazutkin expansion}), Marvizi and Melrose found a two-parameter family of spectrally determined domains amongst those satisfying the noncoincidence condition. Given the recent symbolic computations in \cite{Sorr15}, it would be interesting to find critical points of higher order invariants, or more generally, any function of a finite number of the $\I_k$, by solving the resultant Euler-Lagrange equations.
	\\
	\\
	The clustering of these lengths within narrow bands resembles the eigenvalue clusters seen in perturbation theory \red{(e.g., Weinstein's band invariants \cite{WeinsteinBandInvariants}). Although there is no known operator whose spectrum coincides with the length spectrum, this formal analogy suggests that the internal distribution of lengths within each cluster may contain additional geometric information beyond that which is encoded in the extremal (maximal or minimal) length spectrum}.
	\begin{ques}
		\red{What information about the geometry of $\Omega$ is encoded in the internal distribution of lengths within the clusters $[t_{p,q}, T_{p,q}]$ as $q \to \infty$?}
	\end{ques}
	
	 For an ellipse, the intervals $[t_{p,q}, T_{p,q}]$ collapse into single points with \red{$t_{p,q} = T_{p,q}$ whenever $q \neq 2p$; except for bouncing ball (period two) orbits, all periodic billiard orbits of a given rotation number have the same length and come in one-parameter families corresponding to confocal conic sections (ellipses, hyperbolas, or the limiting segment which connects the two focal points).} In the analytic category, each band has finitely many lengths and one can study their distribution on a logarithmic scale, as was done in \cite{MartinRamirezRos1} and \cite{MartinRamirezRos2}. \red{If one views a general strictly convex domain as the perturbations of an ellipse, then it is natural to ask the extent to which this perturbation is reflected in the corresponding length spectrum.} \red{As an example of the complexity of any answer to this question, we mention the recently announced work of De Simoi, Koudjinan and Zhang, which asserts that there exist Birkhoff billiard tables with \textit{uncountable} length spectrum \cite{deSimoi}.}

	\subsection{Caustics}\label{subsec: Caustics}
	
	\begin{def1}\label{def:caustics}
		A smooth, convex, closed curve $\Gamma$ lying in $\Omega$ is called a {caustic}\footnote{\red{Caustics can be defined much more generally and need not be closed nor convex. For example, confocal hyperbolas are caustics in an elliptical billiard table. In this paper, however, we will focus exclusively on closed convex caustics.}} if any link drawn tangent to $\Gamma$ remains tangent to $\Gamma$ after an elastic reflection at the boundary of $\Omega$. By elastic reflection, we mean that the angle of incidence equals the angle of reflection at an impact point on the boundary. We can map $\Gamma$ to the total phase space $B^* \partial \Omega$ to obtain a smooth closed curve which is invariant under the billiard ball maps $\dtpm$. By an abuse of notation, we will denote by $\Gamma$ both the caustic itself and its corresponding invariant curve in $B^* \d \Omega$.
	\end{def1}
	
	\begin{rema}
		If the dynamics are integrable (for example, in the sense of Liouville), these invariant curves are precisely the Lagrangian tori which foliate \red{the} phase space.
	\end{rema}
	
	We will denote the length of a caustic $\Gamma$ by $|\Gamma|$. \red{We define the rotation number of a \textit{caustic} $\Gamma$ (as opposed to an individual orbit) to be the rotation number of any orbit which is tangent to it; indeed, the billiard map restricted to an invariant circle (corresponding to the caustic) in $B^*\d \Omega$ is an orientation preserving homeomorphism and it was proved by Poincar\'e that every such map from a circle to itself has an invariantly defined rotation number. This rotation number coincides with \eqref{eq: general rotation number} and in particular, gives a rational $p/q$ for billiard orbits with winding number $p$ and period $q$. We will continue to denote the rotation number of a caustic by $\omega$.} Besides the rotation number, we may introduce another invariant associated to a caustic:
	
	\begin{def1}\label{laz}
		Let $x$ and $y$ be two points on a caustic $\Gamma$ and $z \in \d \Omega$ such that the links $\overline{xz}, \overline{zy}$ correspond to a billiard orbit. Denote by $\wideparen{xy}$ the minimal arc connecting $x$ and $y$ and by $|\wideparen{xy}|$ its length. Then the quantity
		\begin{align}
			\label{eq: laz}
			Q = |\overline{xz}| + |\overline{zy}| - \red{|\wideparen{xy}|}
		\end{align}
		is called the \textbf{Lazutkin parameter} of $\Gamma$. See Figure \ref{Lazutkin}.
	\end{def1}

		\begin{figure}
			\centering
			\includegraphics{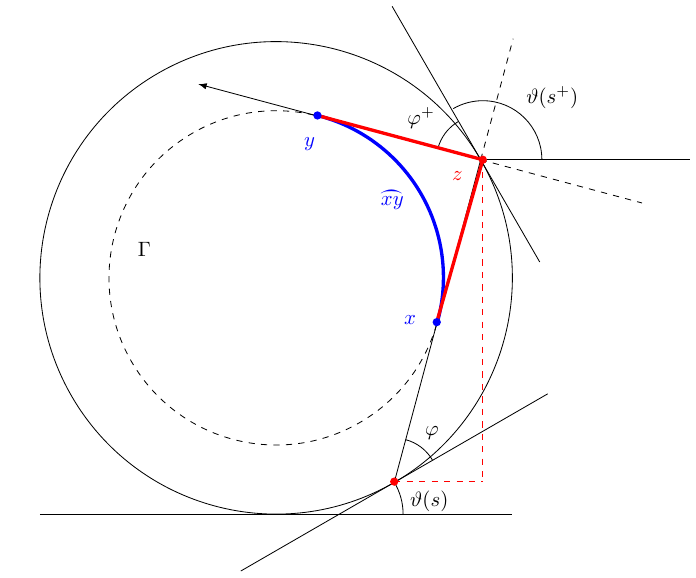}
			\caption{The Lazutkin parameter $Q$ of the caustic $\Gamma$ (dashed) is the sum of lengths of the two red segments minus the length of the blue arc between them.}
			\label{Lazutkin}
		\end{figure}

	Since $\Gamma$ is a caustic, $\d \Omega$ is an \textbf{involute} of $\Gamma$; i.e. if a circular string of length $Q + |\Gamma|$ is wrapped taut around $\Gamma$, then the locus of points traced out by the vertex of the cinched string will coincide with $\d \Omega$. While the quantity \red{\eqref{eq: laz}} can be defined for any convex, closed curve, it is only constant on a caustic. In order to study the Taylor coefficients of Mather's $\beta$-function near $0$, we need to know that sufficiently many caustics exist in order to apply Theorem \ref{aubmath}.
	\begin{theo}[\cite{Lazutkin}]
		In any neighborhood of the boundary, there exists a family of caustics having a Cantor set of Diophantine rotation numbers which have positive Lebesgue measure in any neighborhood of zero.
	\end{theo}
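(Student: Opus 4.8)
The plan is to follow Lazutkin's original strategy, which reduces the statement to an application of Moser's invariant curve theorem (the twist theorem) for exact area-preserving maps of an annulus. The crucial first step is to introduce the coordinates now known as \emph{Lazutkin coordinates}, in which the billiard map $\dtpm$, restricted to a one-sided neighborhood of the boundary, becomes an arbitrarily small perturbation of an integrable monotone twist map. Concretely, one replaces the arclength parameter $s$ by a new angular coordinate $x$ with $dx = c\,\ka(s)^{2/3}\,ds$ (the constant $c$ normalizing $x$ to have period $1$), and pairs it with a variable $y$ that degenerates like a power of the angle $\phi$ as $\phi \to 0$, chosen precisely so as to absorb the square-root singularity of $\dtpm$ at $\{\phi = 0\}$ recorded among the properties of the billiard map above. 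Expanding the generating function $h(s, s') = |x(s) - x(s')|$ along the diagonal and carrying the curvature jet through the change of variables, one obtains in these coordinates
\begin{align*}
	\dtpm \colon (x, y) \longmapsto (x_1, y_1) = \bigl(x + y + y^3 f(x, y),\; y + y^4 g(x, y)\bigr),
\end{align*}
with $f$ and $g$ smooth up to $y = 0$; their smoothness is inherited from the assumed $C^\infty$ regularity of $\d\Omega$. This normal form is the heart of the matter, and deriving it with the stated orders of vanishing --- an intricate bookkeeping of the near-diagonal expansion of $h$ and of how derivatives of $\ka$ enter --- is the main obstacle.

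Granting the normal form, the rest is standard. The displayed map is exact area-preserving, being a reparametrization of $\dtpm$ which preserves $\cos\phi\wedge ds$, and it satisfies the twist condition, since $\d x_1/\d y = 1 + O(y^2) \neq 0$ near $\{y = 0\}$. On any sufficiently thin strip $0 < y < y_0$ it is a perturbation of the integrable twist map $(x, y) \mapsto (x + y, y)$ which is small --- in the relevant $C^k$ sense and relative to the amount of twist --- uniformly as $y_0 \to 0$. Hence Moser's invariant curve theorem applies: for every Diophantine rotation number $\omega$ in an interval $(0, \omega_0)$, with $\omega_0 \to 0$ as $y_0 \to 0$, the map has an invariant curve which is a smooth graph $y = u_\omega(x)$ close to $\{y = \omega\}$, on which the dynamics is smoothly conjugate to the rigid rotation $x \mapsto x + \omega$. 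The measure estimate that accompanies the twist theorem shows that the set of such $\omega$ is a Cantor set whose relative Lebesgue measure in $(0, \omega_0)$ tends to $1$ as $\omega_0 \to 0$; in particular it has positive measure in every neighborhood of $0$.

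It remains to return to the table. Each invariant graph is a simple closed curve in $\overline{B^*\d\Omega}$ that is $\dtpm$-invariant and confined to a neighborhood of $\{\phi = 0\}$ whose width shrinks with $y_0$; its envelope is a smooth convex caustic $\Gamma \subset \Omega$ of rotation number $\omega$, lying in a collar of $\d\Omega$ of correspondingly small width. Choosing $y_0$ small places these caustics in any prescribed neighborhood of the boundary, and for each such choice their Diophantine rotation numbers form a positive-measure Cantor subset of a neighborhood of $0$. This is precisely the assertion, and it supplies caustics at a set of rotation numbers which is dense --- indeed of full measure --- near $0$, as needed in order to invoke Theorem \ref{aubmath}.
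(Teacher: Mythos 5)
The paper does not prove this statement; it is imported verbatim from \cite{Lazutkin} as a black box, so there is no internal argument to compare against. Your proposal is a faithful reconstruction of the standard Lazutkin--Douady proof: pass to Lazutkin coordinates $dx = c\,\ka^{2/3}\,ds$ with a conjugate variable absorbing the square-root singularity at $\phi = 0$, obtain the normal form $(x,y) \mapsto (x + y + y^3 f,\; y + y^4 g)$, rescale the thin annulus $0 < y < y_0$ so that Moser's invariant curve theorem applies with the perturbation small relative to the twist, and read off a positive-measure Cantor set of Diophantine rotation numbers whose invariant graphs envelope convex caustics in an arbitrarily small collar of $\d\Omega$. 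You correctly identify the normal form derivation as the genuine technical content and leave it (and the KAM measure estimate) at the level of citation, which is consistent with how the paper itself uses the result.
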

	The Taylor coefficients of $\beta$ can then be extracted by taking $\omega \to 0$ along a family of Diophantine rotation numbers. $\beta$ is in fact $C^\infty$ in the sense of Whitney on the corresponding Cantor set \cite{Poschel}; see also \cite{CarminatiMarmiSauzinSorrentino}.

	\begin{theo}[Theorems 3.2.10 and 3.2.23 in \cite{Si}]
		\label{thm: caustic length lazutkin expansion}
		Let $\Gamma_\omega$ be a convex caustic of rotation number $\omega$. Then, $\left| \Gamma_\omega \right|$ and $Q\left(\Gamma_\omega\right)$ are marked length spectral invariants satisfying
		\begin{align*}
			\left|\Gamma_\omega \right| = - \beta'(\omega),\\
			Q \left(\Gamma_\omega\right) = \alpha\left(\beta'(\omega)\right),
		\end{align*}
		where $\alpha = \beta^*$ is the convex conjugate ({Legendre-Fenchel transform}) of $\beta$. Furthermore, there exists a formal asymptotic expansion
		\begin{align*}
			\left|\Gamma_\omega\right| \sim \ell + \sum_{k \geq 1} b_{k} Q^{2k/3},
		\end{align*}
		with the coefficients $b_{k}$ being marked length spectral invariants of $\Omega$.
	\end{theo}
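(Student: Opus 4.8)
The plan is to establish the two closed-form identities first and then read the asymptotic expansion off the known regularity of $\beta$ near $0$. \emph{First I would prove $|\Gamma_\omega| = -\beta'(\omega)$.} A convex caustic $\Gamma_\omega$ lifts to a $\dtpm$-invariant Lipschitz graph in $B^*\d\Omega$ of rotation number $\omega$; by the monotone twist theorem quoted above, every orbit on it is minimal, so by Theorem \ref{aubmath}(5)--(6), $\beta$ is differentiable at $\omega$ and $\beta'(\omega) = \int_{\Gamma_\omega}\sigma\,ds$. To evaluate this integral I would parametrize $\d\Omega$ by arclength $s\mapsto x(s)$ and $\Gamma_\omega$ by its own arclength $\tau\mapsto\gamma(\tau)$; the outgoing billiard link at $x(s)$ is tangent to $\Gamma_\omega$, say at $\gamma(\tau(s))$, so $x(s) = \gamma(\tau(s)) + r(s)\,\gamma'(\tau(s))$ for a signed distance $r(s)$ along the caustic's tangent line. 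Differentiating in $s$ and pairing with the unit vector $\gamma'(\tau(s))$ (the curvature term drops, being normal to $\gamma'$) gives $\pm\sigma(s) = \tau'(s) + r'(s)$. Integrating over one loop of $\d\Omega$, during which the tangency point sweeps $\Gamma_\omega$ exactly once (both curves being convex, $s\mapsto\tau(s)$ has degree one) while $r$ returns to its value, yields $\int_{\Gamma_\omega}\sigma\,ds = -|\Gamma_\omega|$, with the sign fixed by the orientation conventions of \cite{Si}. Hence $|\Gamma_\omega| = -\beta'(\omega)$.

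\emph{Next, $Q(\Gamma_\omega) = \alpha(\beta'(\omega))$.} I would take the periodic minimal orbit on $\Gamma_\omega$ of rotation number $p/q$, with bounce points $z_0,\dots,z_q = z_0$ and link--tangency points $P_0,\dots,P_q = P_0$ on $\Gamma_\omega$. Definition \ref{laz} applied at the vertex $z_{i+1}$ reads $Q = \overline{P_i z_{i+1}} + \overline{z_{i+1}P_{i+1}} - \theta_i$, where $\theta_i$ is the minimal arc of $\Gamma_\omega$ from $P_i$ to $P_{i+1}$, which is the forward arc since $p/q < 1/2$. Summing over $i = 0,\dots,q-1$ and regrouping the chord fragments into full links gives $qQ = \sum_i |z_i z_{i+1}| - \sum_i \theta_i = \mls_\Omega(p/q) - p\,|\Gamma_\omega|$, since the orbit realizes the maximal length and the tangency points wind $p$ times around $\Gamma_\omega$. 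Using $\mls_\Omega(p/q) = q\,\beta(p/q)$ (in the sign convention making $\beta'(0) = -\ell$) together with the previous paragraph, this becomes $Q = \omega\beta'(\omega) - \beta(\omega)$; since $\beta$ is convex and differentiable at $\omega$, the supremum defining $\beta^*(\beta'(\omega)) = \sup_t(\beta'(\omega)\,t - \beta(t))$ is attained at $t = \omega$, so $Q = \beta^*(\beta'(\omega)) = \alpha(\beta'(\omega))$.

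\emph{Spectral invariance and the asymptotic expansion.} Because $\mls_\Omega(p/q) = q\beta(p/q)$ for all rational $p/q \in (0,1/2)$ and $\beta$ is continuous, the marked length spectrum determines $\beta$, hence $\beta'$, $\alpha = \beta^*$ and $\ell = -\beta'(0)$; thus $|\Gamma_\omega| = -\beta'(\omega)$ and $Q(\Gamma_\omega)$ are marked length spectral invariants. By Lazutkin's theorem the caustics $\Gamma_\omega$ exist for $\omega$ in a Cantor set $C$ accumulating at $0$, on which $\beta$ is $C^\infty$ in the sense of Whitney, with an asymptotic Taylor series $\beta(\omega) \sim -\ell\omega + \sum_{j\geq 2}\beta_j\omega^j$ as $\omega\to 0$ through $C$. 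The Marvizi--Melrose expansion of the length deficit of the maximal periodic orbits \cite{MM} shows this series carries only odd powers, $\beta(\omega)\sim -\ell\omega + \beta_3\omega^3 + \beta_5\omega^5 + \cdots$ with $\beta_3\neq 0$. Then $|\Gamma_\omega| = -\beta'(\omega) \sim \ell - \sum_{k\geq 1}(2k+1)\beta_{2k+1}\,\omega^{2k}$ is an even series in $\omega$, while $Q = \omega\beta'(\omega)-\beta(\omega)\sim \sum_{k\geq 1}2k\,\beta_{2k+1}\,\omega^{2k+1}$ is an odd series with leading term $2\beta_3\omega^3$. Formally inverting $Q = 2\beta_3\omega^3(1 + O(\omega^2))$ writes $\omega$ as an odd formal power series in $Q^{1/3}$; substituting it into the even series for $|\Gamma_\omega| - \ell$ produces an even series in $Q^{1/3}$, that is a series in $Q^{2/3}$, so $|\Gamma_\omega| \sim \ell + \sum_{k\geq 1}\I_k\,Q^{2k/3}$. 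Each $\I_k$ is a universal polynomial in $\ell$ and $\beta_3,\beta_5,\dots$, which are recovered from the marked length spectrum as limits along $C$ of difference quotients of $\mls_\Omega(p/q)/q$; hence the $\I_k$ are marked length spectral invariants. The expansion is purely formal, but each truncation is a genuine asymptotic approximation as $\omega\to 0$ through $C$ by the Whitney smoothness.

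\emph{Main obstacle.} The one substantive input is the parity used in the third paragraph: that the asymptotic expansion of $\beta$ at $0$ has no even-order terms, equivalently that the length deficit of the maximal periodic orbits is an even series in the rotation number (in particular $\beta_2 = 0$, which is what forces the leading power $Q^{2/3}$ rather than $Q^{1/2}$, and the vanishing of all even coefficients, which is what makes the expansion proceed in $Q^{2/3}$ rather than $Q^{1/3}$). This is exactly the content of the Marvizi--Melrose interpolating-Hamiltonian analysis, or of Lazutkin's boundary normal form; the remaining delicate point is pinning down the orientation signs in the identity $\int_{\Gamma_\omega}\sigma\,ds = -|\Gamma_\omega|$. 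Everything else is bookkeeping with the Legendre transform and the caustic string construction.
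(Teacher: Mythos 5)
The paper does not prove this statement: it is quoted verbatim from Siburg's monograph \cite{Si} (with the expansion itself going back to Marvizi--Melrose \cite{MM} and Amiran), so there is no in-paper argument to compare against. Your reconstruction is essentially the standard proof and is sound in outline: the string-construction computation of $\int_{\Gamma_\omega}\sigma\,ds$ combined with Theorem \ref{aubmath}(6) gives $|\Gamma_\omega|=-\beta'(\omega)$; telescoping Definition \ref{laz} along an orbit on the caustic gives the Legendre identity $Q=\omega\beta'(\omega)-\beta(\omega)=\alpha(\beta'(\omega))$; and the parity of the $\beta$-expansion (only odd powers of $\omega$, with $\beta_3\neq 0$) is exactly what converts the even series for $|\Gamma_\omega|-\ell$ in $\omega$ into a series in $Q^{2/3}$. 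You correctly isolate that parity statement as the one nontrivial external input (Lazutkin's normal form or the Marvizi--Melrose interpolating Hamiltonian).

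Two soft spots worth tightening. First, the sign bookkeeping: with the generating function $h(s,s')=+|x(s)-x(s')|$ and $\beta(p/q)=\tfrac1q\mls_\Omega(p/q)>0$ as written in Section \ref{Billiards}, the identity $Q=\omega\beta'(\omega)-\beta(\omega)$ is false by a sign; it holds in Siburg's convention $h=-|x(s)-x(s')|$, which is also the only convention consistent with $\beta'(0)=-\ell$ and $|\Gamma_\omega|=-\beta'(\omega)$. You flag this, but since the paper itself mixes the two conventions, a careful write-up should fix one convention and carry it through both identities rather than deferring to ``the orientation conventions of \cite{Si}.'' Second, your telescoping argument literally requires a periodic orbit on $\Gamma_\omega$, which exists only for rational $\omega$; the caustics produced by Lazutkin's theorem, and the ones actually used in the expansion, have Diophantine rotation numbers. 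For irrational $\omega$ you should instead sum the Lazutkin identity over a long orbit segment of length $N$, divide by $N$, and use that the Birkhoff averages of the chord lengths and of the swept arcs converge to $-\beta(\omega)$ and $\omega|\Gamma_\omega|$ respectively (unique ergodicity of the circle rotation on the invariant curve); the constancy of $Q$ on the caustic then gives the same identity. With those two repairs the argument is complete.
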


	\begin{rema}
		{Comparing Theorem \ref{thm: caustic length lazutkin expansion} with Theorem \ref{main}, we see that
			\begin{align*}
				b_k = \frac{1}{k!} \left(\frac{3}{2}\right)^{2k/3} \I_k,
			\end{align*}
			where $\I_k$ are the Marvizi-Melrose invariants (see Definition \ref{action}), as} will be shown in Proposition \ref{prop: Kovachev Popov coordinates and caustic length lazutkin expansion}. Comparing the formula for $|\Gamma|$ in Theorem \ref{thm: caustic length lazutkin expansion} with the formula for $\beta'(\omega)$ in Proposition \ref{aubmath}, we see that
		\begin{align*}
			|\Gamma| = \int_{\Gamma} \sigma ds,
		\end{align*}
		$\sigma ds$ being the canonical one form. This will also be important in the proof of Proposition \ref{prop: Kovachev Popov coordinates and caustic length lazutkin expansion} below.
	\end{rema}

	\red{Recall from Section \ref{sec: Main Results} that $\kappa_i$ denotes the $i^{\text{th}}$ derivative of the curvature function $\kappa$ in arclength coordinates.} In \cite{Sorr15} (page 22), it was shown that
	\begin{align}\label{SIK}
		\begin{split} 
			\I_0 =& \iota_0 \int_0^{\ell} ds = \ell,
			\\
			\I_1 =& \iota_1 \int_0^\ell \ka^{2/3} ds,
			\\
			\I_2 =&\iota_2 \int_0^\ell \left(9 \ka^{4/3} + \frac{8 \ka_1^2}{\ka^{8/3}}\right)ds,
			\\
			\I_3 =&\iota_3  \int_0^\ell \left(9 \ka^2 + \frac{24 \ka_1^2}{\ka^2} + \frac{24 \ka_2^2}{\ka^4} - \frac{144 \ka_1^2 \ka_2}{\ka^5} + \frac{176 \ka_1^4}{\ka^6}\right)ds,
			\\
			\I_4 =& \iota_4 \int_0^\ell \bigg( \frac{281 }{44800}  \ka^{8/3} + \frac{281 \ka_1^2}{8400 \ka^{4/3}} + \frac{167 \ka_2^2}{4200 \ka^{10/3}} - \frac{167 \ka_1^2 \ka_2}{700 \ka^{13/3}}\\
			&+ \frac{\ka_3^2}{42 \ka^{16/3}} + \frac{559 \ka_1^4}{2100 \ka^{16/3}}
			- \frac{473 \ka_2^3}{4725 \ka^{19/3}} - \frac{10 \ka_3 \ka_1 \ka_2}{21 \ka^{19/3}}\\
			& +\frac{5 \ka_3 \ka_1^3}{7 \ka^{22/3}} + \frac{10777 \ka_1^4 \ka_2}{1575 \ka^{25/3}} + \frac{521897 \ka_1^6}{127575 \ka^{28/3}} \bigg)ds,
		\end{split}
	\end{align}	
	\red{for some universal nonzero constants $\iota_k, 0 \leq k \leq 4$ (see \eqref{eq:iotavalues} for their precise values)}. Theorem \ref{main} establishes a hierarchical structure for all $\I_k$, including those in \eqref{SIK}. {To verify that the first four coefficients fit into the structure described in Theorem \ref{main}, let us note the following important example, which shows that the \textit{integrands} themselves in \eqref{SIK} are not unique.}

	\begin{exam}\label{rmk: nonuniqueness}
		\red{Since $\kappa$ satisfies periodic boundary conditions, one can freely integrate by parts, which changes the expressions of the integrands without altering the integral itself. Let us integrate by parts to show how the integrals \eqref{SIK} of $\I_k$ can be simplified further to be put into the form described in Theorem \ref{main}}. \red{The integrands of $\I_1$ and $\I_2$ are clearly already compatible with the structure in Theorem \ref{main}. In $\I_3$, the differential degree of each term is at most four and the term $24 \ka_2^2 \ka^{-4}$ has maximal derivatives appearing quadratically. There is another term $-144 \ka_2 \ka_1^2 \ka^{-5}$ in which maximal derivative $\ka_2$ appears without a square. It can be rewritten as}
		\red{
		\begin{align*}
			-144 \ka_2 \ka_1^2 \ka^{-5} ds =& - 48 d\left(\ka_1^3\right) \ka^{-5} = - 240 \ka_1^4 \ka^{-6} ds - 48 d \left(\ka_1^3 \ka^{-5} \right).
		\end{align*}
		The term $- 48 d(\ka_1^3 \ka^{-5})$ is an \textit{exact} differential and hence integrates to zero. With this reduction the integrand of $\I_3$ becomes
		\begin{align*}
			\iota_3 \left(9 \ka^2 + \frac{24 \ka_1^2}{\ka^2} + \frac{24 \ka_2^2}{\ka^4} + (176 - 240) \frac{\ka_1^4}{\ka^6}\right)ds,
		\end{align*}
		which matches the structure described in Theorem \ref{main}; the top order derivative has order $3-1 =  2$ and appears quadratically with a coefficient of $\ka^{-4 \times 3 / 3}$ times the nonzero constant $24 \iota_3$. Similarly, the integrand
		\begin{align*}
			\iota_4 \bigg( &\frac{281 }{44800}  \ka^{8/3}
			+ \frac{281 \ka_1^2}{8400 \ka^{4/3}}
			+ \frac{167 \ka_2^2}{4200 \ka^{10/3}}
			- \frac{167 \ka_1^2 \ka_2}{700 \ka^{13/3}}\\
			&+ \frac{\ka_3^2}{42 \ka^{16/3}}
			+ \frac{559 \ka_1^4}{2100 \ka^{16/3}}
			- \frac{473 \ka_2^3}{4725 \ka^{19/3}}
			- \frac{10 \ka_3 \ka_1 \ka_2}{21 \ka^{19/3}}\\
			&+\frac{5 \ka_3 \ka_1^3}{7 \ka^{22/3}}
			+ \frac{10777 \ka_1^4 \ka_2}{1575 \ka^{25/3}}
			+ \frac{521897 \ka_1^6}{127575 \ka^{28/3}} \bigg)
		\end{align*}
		of $\I_4$ has a quadratic term $\ka_3^2 / (42 \ka^{{16}/{3}})$ and two other terms in which $\ka_3$ appears without a square. They can each be transformed as follows:
		\begin{align*}
			- \frac{10 \ka_3 \ka_1 \ka_2}{21 \ka^{19/3}} ds =& - \frac{10 \ka_1 }{42 \ka^{19/3}} d \left(\ka_2^2\right)\\
			=&
			\frac{10 \ka_2^3 }{42 \ka^{19/3}} ds - \frac{95 \ka_1^2 \ka_2^2}{63 \ka^{22/3}} ds - d \left(\frac{10 \ka_1 \ka_2^2}{42 \ka^{19/3}} \right)
		\end{align*}
		and
		\begin{align*}
			\frac{5 \ka_3 \ka_1^3}{7 \ka^{22/3}} ds = - \frac{15 \ka_2^2 \ka_1^2}{7 \ka^{22/3}} ds + \frac{110 \ka_2 \ka_1^4}{21 \ka^{25/3}} ds + d \left(\frac{5 \ka_2 \ka_1^3}{7 \ka^{22/3}}\right),
		\end{align*}
		both of which can be absorbed into the remainder term $\mathcal{R}_4$ in Theorem \ref{main}.
	}
	\end{exam}

	One goal of this paper is to describe an algorithm for strategically integrating by parts in a way that \textit{minimizes} the maximal order of derivatives appearing in the integrands of higher order $\I_k$. This algorithm is described in detail in Section \ref{subsec: cohomological considerations and curvature polynomials} (see Proposition \ref{IBPalgorithm}) and used extensively in Section \ref{Integral invariants}, in particular, Section \ref{subsec: integration by parts}.

	\section{Proof of Theorem \ref{main} $\implies$ Theorem \ref{cpt}}\label{MIC}
	
	In this section, we show how the algebraic structure formula for $\I_k$ in Theorem \ref{main} yields compactness in Theorem \ref{cpt}. We begin with some $L^\infty$ estimates on $\ka, \ka_1$ which will be needed later \red{to estimate} the $L^2$-norms of higher order derivatives. {Since $|\d \Omega| = \ell$ is constant on marked length isospectral sets, we will fix $\ell$ throughout the remainder of the paper.}

	\begin{lemm}\label{kappa0}
		For each Birkhoff billiard table $\Omega_0$, there exists a $c(\Omega_0) > 0$ such that for all $\Omega$ marked length isospectral to $\Omega_0$,
		$$
		0  <c(\Omega_0) \leq \ka_{\Omega} \leq \frac{1}{c(\Omega_0)}.
		$$
	\end{lemm}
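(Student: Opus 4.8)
The plan is to extract the two-sided bound on curvature from the first few marked length spectral invariants $\I_0, \I_1, \I_2$, which are fixed on the isospectral set $\M(\Omega_0)$. Write $\ell = |\d\Omega| = \I_0$, which is a constant on $\M(\Omega_0)$ since $\beta'(0) = -\ell$ is determined by $\beta$. By the Gauss--Bonnet / total turning argument for a convex curve, $\int_0^\ell \ka\, ds = 2\pi$, so $\ka$ has fixed $L^1$ norm and fixed average $2\pi/\ell$. This already forces $\ka$ to be somewhere comparable to $1/\ell$, but to get a pointwise \emph{lower} bound one needs to rule out $\ka$ being large on a tiny set and small elsewhere, and for a pointwise \emph{upper} bound one needs to rule out concentration.

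First I would establish the upper bound. From $\I_1 = \int_0^\ell \ka^{2/3}\,ds$ being a fixed constant and $\I_2 = \int_0^\ell \big(9\ka^{4/3} + 8\ka_1^2\ka^{-8/3}\big)\,ds$ being a fixed constant, we get in particular that $\int_0^\ell \ka^{4/3}\,ds \le \I_2/9 =: C_1$ and $\int_0^\ell \ka_1^2\ka^{-8/3}\,ds \le \I_2/8 =: C_2$ are bounded uniformly over $\M(\Omega_0)$. Now $\int \ka^{4/3}$ bounded plus $\int \ka = 2\pi$ gives, via interpolation, an $L^p$ bound on $\ka$ for some $p>1$; combined with control on a derivative-type quantity one promotes this to $L^\infty$. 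Concretely, set $u = \ka^{-1/3}$, so that $u_s = -\tfrac13 \ka^{-4/3}\ka_1$ and $\int_0^\ell u_s^2\, ds = \tfrac19 \int_0^\ell \ka^{-8/3}\ka_1^2\, ds \le C_2/9$; thus $u \in H^1(\R/\ell\Z)$ with a uniform bound on $\|u_s\|_{L^2}$. Since $\int_0^\ell u^{-4}\, ds = \int_0^\ell \ka^{4/3}\,ds \le C_1$ and $\int_0^\ell u^{-3}\,ds = \int \ka = 2\pi$, $u$ cannot vanish identically-ish — more precisely there is a point $s_0$ with $u(s_0) \ge (\ell/C_1)^{1/4} =: a_0 > 0$ uniformly. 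By Sobolev embedding $H^1(\R/\ell\Z) \hookrightarrow C^{1/2}$, and the uniform $H^1$ bound (we also need $\|u\|_{L^2}$ bounded, which follows since $u = \ka^{-1/3}$ and $\int \ka^{-2/3}$ can be bounded below-degree-interpolated from $\int \ka^{-3} $... here I would instead bound $\|u\|_{L^\infty}$ directly: $u(s) \le u(s_0) + \int |u_s|$, but that only gives an upper bound on $u$, i.e. a lower bound on $\ka$). Let me reorganize: the uniform bound $\|u_s\|_{L^2} \le C$ gives $\mathrm{osc}(u) \le \sqrt{\ell}\,\|u_s\|_{L^2} \le C'$. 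Combined with the lower bound $u(s_0) \ge a_0$ at one point, we get $u(s) \ge a_0$ everywhere fails, but we get $u(s) \le a_0 + C'$... no — we get both $u \ge a_0 - C'$ and $u \le $ (value at a point where $u$ is small) $+ C'$. The clean statement: $\mathrm{osc}(u) \le C'$ means $u_{\max} \le u_{\min} + C'$; since $\int u^{-3} = 2\pi$ there is a point with $u^{-3} \ge 2\pi/\ell$, i.e. $u_{\min} \le (\ell/2\pi)^{1/3} =: b_0$, hence $u \le b_0 + C'$ everywhere, which gives $\ka = u^{-3} \ge (b_0 + C')^{-3} > 0$, a uniform \emph{lower} bound on $\ka$. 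For the upper bound on $\ka$, i.e. a lower bound on $u$: from $\int u^{-4} \le C_1$ and $\mathrm{osc}(u)\le C'$, there is a point with $u^{-4} \le C_1/\ell$, i.e. $u \ge (\ell/C_1)^{1/4} =: a_0$ at that point, hence $u \ge a_0 - C'$ everywhere; if $a_0 - C' > 0$ we are done, but in general $a_0 - C'$ could be nonpositive, so I would need to combine the two Sobolev-type estimates more carefully, or sharpen the oscillation bound by noting $C'$ scales with the (fixed) constants $C_1, C_2, \ell$ in a way that the worst case is still strictly controlled, e.g. by running a Moser-type iteration or using the finer invariant $\I_3$. A cleaner route for the upper bound on $\ka$: since $\mathrm{osc}(u) \le C'$ and we have the lower bound $\ka \ge \delta_0 > 0$ just proved, $u = \ka^{-1/3} \le \delta_0^{-1/3}$ everywhere, so $u_{\max} \le \delta_0^{-1/3}$, hence $u_{\min} \ge u_{\max} - C' \ge \delta_0^{-1/3} - C'$; but again this may be negative. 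The genuinely correct argument: use that $\int_0^\ell \ka^{4/3} ds \le C_1$ together with $\ka \ge \delta_0$ implies $\ka \le (C_1/(\ell \delta_0^{1/3}))$... no. I would instead argue by contradiction and compactness: if $\ka_{\Omega_n}(s_n) \to \infty$ along a sequence in $\M(\Omega_0)$, then since $\int \ka_{\Omega_n} = 2\pi$ is fixed, $\ka_{\Omega_n}$ concentrates, and the term $\int \ka_1^2 \ka^{-8/3} = \int 9 u_s^2 \cdot (\text{stuff})$... here concentration of $\ka$ near a point with bounded $\int\ka$ forces large $|\ka_1|$ on the transition region, and one checks $\int \ka_1^2 \ka^{-8/3}\, ds$ must blow up — contradicting the fixed value of $\I_2$. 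This is the heart of the matter.

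The main obstacle I expect is precisely this last point: converting "fixed values of $\I_1, \I_2$" into a genuine two-sided pointwise bound with explicit constants, rather than a mere oscillation bound. The subtlety is that $\int \ka = 2\pi$ plus $\int \ka^{-8/3}\ka_1^2 \le C_2$ must be shown to preclude both the "tall spike" scenario (which I would rule out by the argument that a spike of height $H$ over width $w$ contributes $\sim H w \sim 2\pi$ to $\int\ka$, forces $w \sim 1/H$, and then the derivative $\ka_1 \sim H/w \sim H^2$ on the transition, giving $\int \ka_1^2 \ka^{-8/3} \gtrsim H^{4} \cdot w \cdot \ka^{-8/3}$; with $\ka \sim H$ on the transition this is $\gtrsim H^4 \cdot (1/H) \cdot H^{-8/3} = H^{1/3} \to \infty$) and the "deep valley" scenario (ruled out similarly using $\int \ka^{4/3} \le C_1$ to prevent $\ka \to 0$ on a set of definite measure, combined with the oscillation bound on $u=\ka^{-1/3}$). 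I would present this rigorously as a compactness-and-contradiction argument on sequences $\Omega_n \in \M(\Omega_0)$, using the substitution $u = \ka^{-1/3}$, the uniform $H^1(\R/\ell\Z)$ bound $\|u_s\|_{L^2}^2 = \tfrac19 \int \ka^{-8/3}\ka_1^2\,ds \le \I_2/72$, the uniform Hölder-$\tfrac12$ bound it yields, and the constraints $\int u^{-3}\,ds = 2\pi$, $\int u^{-4}\,ds \le \I_2/(9\ell)$, to pin $u$ between two positive constants depending only on $\ell(\Omega_0)$ and $\I_2(\Omega_0)$; this gives $c(\Omega_0) \le \ka_\Omega \le 1/c(\Omega_0)$ as claimed.
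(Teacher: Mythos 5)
Your lower bound on $\ka$ is correct and is essentially the paper's mechanism: the second term of $\I_2$ controls $\|u_s\|_{L^2}$ for $u=\ka^{-1/3}$ (equivalently a weighted $L^2$ norm of $(\log\ka)'$), giving an oscillation bound, and anchoring at a point where $\ka$ equals its mean $2\pi/\ell$ finishes it. But the upper bound on $\ka$ is a genuine gap, and you correctly diagnose it yourself: an oscillation (or one-point-anchored H\"older) bound on $u=\ka^{-1/3}$ only gives $u \ge b_0 - C'$, which may be negative, so nothing prevents $u$ from touching $0$, i.e.\ $\ka$ from blowing up. The subsequent "spike of height $H$ and width $1/H$" computation and the appeal to compactness-and-contradiction are heuristics, not a proof; as written the upper bound is not established.

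The paper's fix is to replace $\ka^{-1/3}$ by $\log\ka$, which symmetrizes the two bounds into a single oscillation estimate. By Cauchy--Schwarz,
\begin{align*}
\int_0^\ell \left|\tfrac{d}{ds}\log\ka\right| ds \;=\; \int_0^\ell |\ka_1|\,\ka^{-4/3}\cdot \ka^{1/3}\, ds \;\le\; \Bigl(\int_0^\ell \ka_1^2\ka^{-8/3}\,ds\Bigr)^{1/2}\Bigl(\int_0^\ell \ka^{2/3}\,ds\Bigr)^{1/2} \;\le\; \bigl(\I_2/8\bigr)^{1/2}\,(2\pi)^{1/3}\ell^{1/6},
\end{align*}
using H\"older and $\int\ka=2\pi$ on the last factor. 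Hence $\log(\ka_{\max}/\ka_{\min})=\mathrm{osc}(\log\ka)$ is bounded by spectral data, and since $\ka$ attains the value $2\pi/\ell$ somewhere, $\|\log\ka\|_{L^\infty}$ is bounded, which is exactly the two-sided bound $c\le\ka\le 1/c$. If you prefer to stay with $u=\ka^{-1/3}$, your route can also be closed, but you must use the pointwise H\"older-$\tfrac12$ estimate rather than the global oscillation bound: if $u(s^*)=\epsilon$, then $u(s)\le \epsilon + \|u_s\|_{L^2}|s-s^*|^{1/2}$, so $\ka(s)\ge(\epsilon+C|s-s^*|^{1/2})^{-3}$, and integrating over $|s-s^*|\le(\epsilon/C)^2$ gives $2\pi=\int\ka\gtrsim \epsilon^{-1}C^{-2}$, a uniform positive lower bound on $u_{\min}$ and hence the upper bound on $\ka$. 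Either repair is short; note also that your detour through $\int\ka^{4/3}\le\I_2/9$ is not needed.
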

	
	\begin{proof}
		Let us examine the formula
		$$
		\I_2 = \red{\iota_2} \int_0^\ell \left(9 \ka^{4/3} + \frac{8 \ka_1^2}{\ka^{8/3}}\right)ds.
		$$
		It follows that for any $\Omega$ which is marked length isospectral to $\Omega_0$, we have
		\begin{align*}
			8 \red{| \iota_2|} \int_0^\ell \ka^{-2/3} \left(\frac{d}{d s}\log \ka(s)\right)^2 ds = \red{|\iota_2|} \int_0^\ell \frac{8 \ka_1^2}{\ka^{8/3}} ds \leq |\I_2|.
		\end{align*}
		\red{Since
		\begin{align*}
			\int_{\d \Omega} \ka(s) ds = 2\pi
		\end{align*}
		by the Gauss-Bonnet formula, it follows that there exists some point $s_0 \in \R/\ell \Z$ at which $\ka(s) = 2\pi / \ell$. Using the fundamental theorem of calculus and Cauchy-Schwarz inequality, we see that
		\begin{align*}
			\|\log \ka\|_{L^\infty} \leq& \left|\log \left(\frac{2\pi}{\ell}\right)\right| + \int_0^\ell \left| \frac{d}{d s} \log \ka \right| ds
			\\
			\leq &  \log 2\pi + \left|\log \ell \right| +\left(\int_0^\ell \left| \frac{d}{d s} \log \ka \right|^2 \ka^{-2/3} ds\right)^{1/2} \left(\int_0^\ell \ka^{2/3} ds\right)^{1/2}
			\\
			\leq &  \log 2\pi + |\log \I_0| + \sqrt{\frac{| \I_1 \I_2 |}{8 |\iota_1 \iota_2|}}.
		\end{align*}
	}

		The lemma then follows immediately from the boundedness of $\log \ka$.
	\end{proof}

	\begin{lemm}\label{k1}
		For any $\Omega_0$, $\ka_2$ is bounded in $L^2$ and $\ka_1$ is bounded in $L^\infty$ on the marked length isospectral set $\mathcal{M}(\Omega_0)$.
	\end{lemm}
	\begin{proof}
		\red{Since $\ka_1$ has mean zero, there exists $s_0 \in \R/\ell \Z$ such that $\ka_1(s_0) = 0$.} It is \red{then} clear that
		\begin{align*}
			\left|\ka_1 (s) \right| = \left| \int_{s_0}^s \ka_2(t)\, dt\right| \leq \| \ka_2\|_{L^1}.
		\end{align*}
		\red{We will demonstrate the Lemma by showing that via integration by parts, the integrand of $\iota_3^{-1} \I_3$ can be converted into a sum of all \textit{nonnegative} terms, one of which will be $(24 - A^2) \ka_2^2/\ka^4$ for some positive constant $A < \sqrt{24}$ (see equation \eqref{CSq} below). Combining this with the upper bound $\ka \leq C$ in Lemma \ref{kappa0}, we see that
		\begin{align*}
			\|\ka_2\|_{L^2}^2 \leq & \int_0^\ell \frac{C^4}{\ka^4} \ka_2^2ds \leq \frac{C^4}{(24-A^2)} \int_0^\ell (24 - A^2)  \frac{\ka_2^2}{\ka^4} + \text{(nonnegative terms) } ds\\
			=&
			\left| \frac{C^4}{\iota_3 (24- A^2)} \I_3 \right|,
		\end{align*}
		and hence, $\ka_2$ is bounded in $L^2$. The claim that $\ka_1 \in L^\infty$ then follows directly from the fundamental theorem of calculus $\|\ka_1\|_{L^\infty} \leq \|\ka_2\|_{L^1}$ together with H\"older's inequality $\|\ka_2\|_{L^1} \leq \sqrt{\ell} \|\ka_2\|_{L^2}$.}
		\\
		\\
		\red{Let us now show that $\iota_3^{-1} \I_3$ can be written as the sum of nonnegative terms as described above. To begin,} observe that all terms in $\iota_3^{-1} \I_3$ are \red{nonnegative} except for \red{$-144 \ka_1^2 \ka_2 \ka^{-5}$,} which can be combined with $176 \ka_1^4 \ka^{-6}$ via integration by parts:
		\begin{align*}
			\int_0^\ell \frac{\ka_1^4}{\ka^6} \,ds = \int_0^\ell \frac{- 3 \ka \ka_1^2 \ka_2}{\ka^6}\,ds + 6 \int_0^\ell \frac{\ka \ka_1^4}{\ka^7},\\
			\implies \int_0^\ell \frac{\ka_1^4}{\ka^6} \,ds = \frac{3}{5} \int_0^\ell \frac{\ka_1^2 \ka_2}{\ka^5}\,ds.
		\end{align*}
		\red{However, applying this to the entire $-144 \frac{\ka_1^2 \ka_2}{\ka^5}$ term produces a negative coefficient when combined with $176 \frac{\ka_1^4}{\ka^6}$. Instead, we will include $24 \frac{\ka_2^2}{\ka^4}$ in our simplification as follows.} Let us write
		\begin{align*}
			176 = \alpha + \beta, \qquad 0 \leq \alpha, \beta \leq 176,
		\end{align*}
		so that
		\begin{align*}
			\begin{split}
				\I_3 &= \iota_3 \int_0^\ell 9 \ka^2 + \frac{24 \ka_1^2}{\ka^2} + \frac{24 \ka_2^2}{\ka^4} - 144 \frac{\ka_1^2 \ka_2}{\ka^5} +  (\alpha + \beta)\frac{ \ka_1^4}{\ka^6}\,ds\\
				&= \iota_3 \int_0^\ell 9 \ka^2 + \frac{24 \ka_1^2}{\ka^2} + \frac{24 \ka_2^2}{\ka^4} + \left(\frac{3 \alpha }{5} - 144 \right)\frac{\ka_1^2 \ka_2}{\ka^5} +  \beta \frac{ \ka_1^4}{\ka^6}\,ds.
			\end{split}
		\end{align*}
		Isolating the differential degree four terms, we multiply all terms by the appropriate power of $\ka$ and complete the square to obtain
		\begin{align}\label{CSq}
			\I_3 = \iota_3 \int_0^\ell 9 \ka^2 + \frac{24 \ka_1^2}{\ka^2} + \frac{\left(A \ka_2 \ka - B\ka_1^2 \right)^2}{\ka^6} + \frac{(24- A^2) \ka_2^2 \ka^2 }{\ka^6} \,ds,
		\end{align}
		where $A, B$ are chosen so that 
		\begin{align*}
			\begin{cases}
				0 < B^2 = \beta,\\
				-2 A B = \frac{3 \alpha}{5} - 144,\\
				\alpha + \beta = 176.
			\end{cases}
		\end{align*}
		Clearly, $B = \sqrt{\beta}$ and solving the other two equations for $\beta$ yields
		\begin{align}\label{quadratic}
			\red{B^2} - \frac{10}{3} A \red{B} + 64 = 0. 
		\end{align}
		Solving for positive roots, we see that the discriminant is positive if and only if
		\begin{align*}
			A^2 > \frac{256 }{100} \times 9 = 23.04,
		\end{align*}
		which is conveniently less than $24$, \red{so that the rightmost integrand in \eqref{CSq} is \textit{positive}.} Putting in $A^2 = 257 \times 9 / 100$, we find that
		\begin{align*}
			23.04 < A^2 = 23.13 < 24,
		\end{align*}
		and the zeros of \eqref{quadratic} are given by
		\begin{align*}
			0 < \red{B_\pm} = \frac{\sqrt{257}}{2}  \pm \frac{1}{2},
		\end{align*}
		\red{from which it is clear that either choice of $\beta_\pm = B_\pm^2$ satisfies $0 < \beta_\pm < 176$.} \red{For $\beta = \beta_+$ and $\alpha = 176 - \beta_+$}, we see that $(24 - A^2) > 0$ and each of the terms in the integrand of \eqref{CSq} \red{becomes} nonnegative.
	\end{proof}

	We now show how to use the algebraic structure in Theorem \ref{main} to estimate the Sobolev norms of the curvature on $\M(\Omega)$.

	\begin{prop}\label{unifbdd}
		For each $\Omega_0$ and $k \in \N$, $\ka_{k-1}$ is bounded in $L^2$ on the marked length isospectral set $\M(\Omega_0)$.
	\end{prop}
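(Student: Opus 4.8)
The plan is to induct on $k$. The cases $k\le 4$ are exactly those already used to prove Theorem \ref{cpt}: $k=1$ is Lemma \ref{kappa0}; $k=2$ follows from the formula for $\I_2$ in \ref{SIK} together with the pointwise lower bound on $\ka$ (so that $\ka^{-8/3}\ge c(\Omega_0)^{8/3}$ and $\|\ka_1\|_{L^2}^2\lesssim \I_2$); and $k=3,4$ follow from the formulas for $\I_3,\I_4$ and Lemmas \ref{kappa0}, \ref{k1}, by completing the square in the top derivative (as in \ref{CSq} for $\I_3$) and, for $\I_4$, first integrating by parts the offending $\ka_2^3$ and $\ka_1^4\ka_2$ terms so that the surviving quadratic remainder is a polynomial in $\ka^{\pm1}$, $\ka_1$ and $\ka_2$ with $\ka_1$ appearing only through its $L^\infty$-controlled powers. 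For the inductive step, fix $k\ge 5$ and assume $\ka_j$ is uniformly bounded in $L^2$ on $\M(\Omega)$ for every $j\le k-2$, i.e.\ $\ka$ is uniformly bounded in $H^{k-2}(\d\Omega)$. Since $\ell=|\d\Omega|$ is a marked length spectral invariant and hence the same for every $\Omega\in\M(\Omega)$, the one-dimensional Sobolev embedding $H^{k-2}\hookrightarrow C^{k-3,1/2}$ (with embedding constant depending only on $\ell$) combined with Lemma \ref{kappa0} gives a constant $C=C(\M(\Omega))$ with $\|\ka^{\pm1}\|_{L^\infty},\|\ka_0\|_{L^\infty},\dots,\|\ka_{k-3}\|_{L^\infty}\le C$; in particular every monomial in $\ka^{\pm1},\ka_1,\dots,\ka_{k-3}$ is uniformly bounded in $L^\infty$.

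The algebraic heart of the step is to complete the square in $\ka_{k-1}$ using the structure of Conjecture \ref{main}. Writing $\mathcal{P}_k=c_k\ka_{k-1}^2+\ka_{k-1}\q_k+\RR_k$ with $c_k\ne0$,
\begin{align*}
	\mathcal{P}_k=c_k\left(\ka_{k-1}+\frac{\q_k}{2c_k}\right)^2+\s_k,\qquad \s_k:=\RR_k-\frac{\q_k^2}{4c_k}.
\end{align*}
Because $\RR_k$ has differential degree $\le 2k-2$ and $\q_k$ has differential degree $\le k-1$, the expression $\s_k$ is again built from $\ka^{\pm1}$ and $\ka_1,\dots,\ka_{k-2}$ only, with differential degree $\le 2k-2$ (the same class of invariants as the integrands in \ref{SIK}). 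The idea is to show that $\int_0^\ell|\s_k|\,ds$ and $\int_0^\ell\q_k^2\,ds$ are both uniformly bounded on $\M(\Omega)$. Granting this, and using that $\ka^{-r/3}$ is pinched between two positive constants on $\M(\Omega)$ (Lemma \ref{kappa0}) while $\I_k$ is a fixed number on $\M(\Omega)$ (being a marked length spectral invariant),
\begin{align*}
	|c_k|\int_0^\ell\ka^{-r/3}\left(\ka_{k-1}+\frac{\q_k}{2c_k}\right)^2ds=\left|\I_k-\int_0^\ell\ka^{-r/3}\,\s_k\,ds\right|\le C(\M(\Omega)),
\end{align*}
so $\|\ka_{k-1}+\q_k/(2c_k)\|_{L^2}^2\le C(\M(\Omega))$, and the triangle inequality together with the bound on $\|\q_k\|_{L^2}$ gives $\|\ka_{k-1}\|_{L^2}\le C(\M(\Omega))$, completing the induction. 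Theorem \ref{cpt} in the $C^\infty$ category then follows from the Sobolev embedding $H^s(\d\Omega)\hookrightarrow C^{s-1/2}(\d\Omega)$ and Ascoli--Arzelà, as already indicated in Section \ref{conjimp}.

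Everything thus reduces to one combinatorial integral estimate: if $M=\ka^{a_0}\ka_1^{a_1}\cdots\ka_{k-2}^{a_{k-2}}$ is a monomial with $a_0\in\Z$, $a_1,\dots,a_{k-2}\in\N$ and differential degree $\sum_{j\ge1}ja_j\le 2k-2$, then $\int_0^\ell|M|\,ds\le C(\M(\Omega),M)$. Here the only derivative not yet controlled in $L^\infty$ is $\ka_{k-2}$, and it carries $k-2$ derivatives, so $(k-2)a_{k-2}\le 2k-2$ forces $a_{k-2}\le 2+\tfrac{2}{k-2}<3$, i.e.\ $a_{k-2}\in\{0,1,2\}$ because $k\ge5$. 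Writing $M=\ka_{k-2}^{a_{k-2}}M'$, the factor $M'$ involves only $\ka^{\pm1}$ and $\ka_1,\dots,\ka_{k-3}$ and so $\|M'\|_{L^\infty}\le C(\M(\Omega),M)$ by the first paragraph; then $\int_0^\ell|M|\,ds$ is bounded by $\|M'\|_{L^\infty}$ times $\ell$, $\|\ka_{k-2}\|_{L^1}\lesssim\|\ka_{k-2}\|_{L^2}$, or $\|\ka_{k-2}\|_{L^2}^2$ according as $a_{k-2}=0,1,2$, each of which is bounded by the inductive hypothesis. Applying this monomial by monomial to $\s_k$ and to $\q_k^2$ (whose monomials also lie in this class, $\q_k^2$ having differential degree $\le 2k-2$) supplies the two bounds used above.

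The main obstacle — indeed the only genuinely nonroutine point — is precisely the verification that this monomial bookkeeping closes, which hinges on the "highest derivatives appear quadratically" part of Conjecture \ref{main}: the differential-degree ceiling $2k-2$ is exactly what prevents a cube $\ka_{k-2}^3$ (which would require $\|\ka_{k-2}\|_{L^3}$, not controlled by $\|\ka_{k-2}\|_{L^2}$ alone) from occurring, and guarantees that in any monomial the remaining factors have order $\le k-3$ and hence lie in $L^\infty$. The passage through $\q_k^2$ after completing the square is safe for the same reason, since $\deg_{\mathrm{diff}}\q_k\le k-1$ doubles to at most $2k-2$. The remaining ingredients — the base cases $k\le 4$, which require the explicit formulas \ref{SIK} and a little integration by parts, and the Sobolev threshold ($k-2\ge1$, comfortably met once $k\ge5$) — are straightforward.
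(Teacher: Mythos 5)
Your proof is correct and reaches the same conclusion, but the key analytic step differs from the paper's. Both arguments isolate the top derivative via the quadratic structure of $\mathcal{P}_k$ (your completing the square in $\ka_{k-1}$ is equivalent to the paper's Cauchy--Schwarz on the cross term followed by dividing by $\|\ka_{k-1}\|_{L^2}$), and both then reduce to bounding integrals of monomials in $\ka,\dots,\ka_{k-2}$ of differential degree $\le 2k-2$. The paper does this by a generalized H\"older inequality followed by Gagliardo--Nirenberg--Sobolev interpolation between $\|\ka_{k-2}\|_{L^2}$ and $\|\ka\|_{L^\infty}$, with a delicate choice of exponents $p_\ell,\wt{p}_\ell,\theta_\ell$. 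You instead upgrade the inductive hypothesis $\ka\in H^{k-2}$ (uniformly on $\M(\Omega)$, with $\ell$ fixed since $|\d\Omega|=-\beta'(0)$ is an MLS invariant) to $L^\infty$ control of $\ka,\dots,\ka_{k-3}$ via the one-dimensional Sobolev embedding, and then observe that the degree ceiling forces $\ka_{k-2}$ to appear at most quadratically in any monomial of $\mathcal{R}_k$ or $\q_k^2$ once $k\ge 5$. This is cleaner: it replaces the interpolation bookkeeping with a single exponent count $(k-2)a_{k-2}\le 2k-2\Rightarrow a_{k-2}\le 2$, and it makes explicit why $k\le 4$ must be handled as base cases (for $k=4$ a cube $\ka_2^3$ genuinely occurs in $\I_4$). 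One small caveat in your base-case sketch: integrating the $\ka_2^3$ term of $\I_4$ by parts reintroduces $\ka_3$ (from $(\ka_2^2)'$), so the resulting cross terms must be absorbed into the positive $\ka_3^2/(42\,\ka^{16/3})$ term by a small-constant Cauchy--Schwarz rather than landing in a polynomial in $\ka^{\pm1},\ka_1,\ka_2$ alone; the paper asserts this base case without detail, so this does not affect the comparison.
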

	\begin{proof}
		The proposition is clearly true for $k = 1,2, 3$, coming from Lemmas \ref{kappa0} and \ref{k1}, which use formulas \eqref{SIK} for the first few $\I_k$ in \cite{Sorr15}. \red{Assuming Theorem \ref{main}, we see that
		\begin{align*}
			\left| \mathcal{I}_k \right| = \left|\int_{\d \Omega} c_k \ka^{-4k/3}(s) \ka_{k-1}^2(s) + \mathcal{R}_k(s) ds \right|
		\end{align*}
		is bounded on $\mathcal{M}(\Omega_0)$ and hence,
		\begin{align*}
			\int_0^\ell \ka_{k-1}^2 ds  \leq \frac{1}{|c_k| c(\Omega_0)^{4k/3}} \left(\left| \int_0^\ell \mathcal{R}_k ds \right|  +|\I_k|\right),
		\end{align*}
		where $c(\Omega_0)$ is the constant in Lemma \ref{kappa0} and $c_k$ is the constant in Theorem \ref{main}. We will prove the proposition by showing that for each $k \in \N$, the remainder term $\mathcal{R}_k$ is bounded in $L^1$ on $\mathcal{M}(\Omega_0)$. $\mathcal{R}_k$ depends on derivatives of $\ka$ of order at most $k-2$ and has differential degree less than or equal to $2k - 2$. For any monomial $u = \ka_0^{p_0} \cdots \ka_{k-2}^{p_{k-2}}$ in $\mathcal{R}_k$, denote by $p_i$ the power of each $\ka_i$, $1 \leq i \leq k-2$. Since the differential degree of $\mathcal{R}_k$ is at most $2k - 2$, we see that
		\begin{align}
			\label{eq:pk bound}
			p_{k-2} (k -2) \leq 2k - 2 \implies p_{k-2} \leq 2 + \frac{2}{k-2}.
		\end{align}
		We have two cases:}
		\\
		\\
		\red{\textbf{Case 1.} If $k = 4$, we can write
		\begin{align*}
			\mathcal{R}_4 = \sum_{\substack{p_0, p_1, p_2\\ p_1 + 2 p_2 \leq 6} } c_{p_0,p_1,p_2} \kappa^{p_0} \kappa_1^{p_1} \kappa_2^{p_2},
		\end{align*}
		for some coefficients $c_{p_0, p_1, p_2}$. Now, \eqref{eq:pk bound} tells us that $p_2 \leq 3$. If $p_2 = 0$, Lemmas \ref{kappa0} and \ref{k1} show us that $u$ is bounded in $L^\infty$ and in particular, in $L^1$. If $p_2 = 1$, H\"older's inequality gives us
		\begin{align*}
			\|u\|_{L^1} \leq \|\ka_0^{p_0} \|_{L^\infty} \|\ka_1\|_{L^\infty}^{p_1} \ell^{\half} \|\ka_2 \|_{L^2},
		\end{align*} 
		the right-hand side of which is bounded on $\mathcal{M}(\Omega_0)$. If $p_2 = 2$, we automatically have the same bound without needing to invoke H\"older's inequality, so we can bound all terms with $p_2 < 3$ by some constant which is uniform on $\M(\Omega_0)$:
		\begin{align*}
		\sum_{\substack{p_0,p_1, p_2\\ p_1 + 2 p_2 \leq 6\\p_2 \leq 2}}  c_{p_0,p_1,p_2} \| \kappa^{p_0} \kappa_1^{p_1} \kappa_2^{p_2} \|_{L^1} \lesssim 1.
		\end{align*}
		If $p_2 = 3$, then $p_1 = 0$ and we use the interpolated Gagliardo-Nirenberg-Sobolev inequality:
		\begin{align*}
			\| \d^s \ka_2\|_{L^p} &\leq C \| \d^t \ka_2 \|_{L^r}^\theta \|\ka_2 \|_{L^q}^{1- \theta},\\
			\frac{1}{p} &= \frac{s}{n} + \theta \left( \frac{1}{r} - \frac{t}{n}\right) + \frac{1- \theta}{q},\\
			&1 \leq p < \infty, \quad 1 \leq q, r \leq \infty, \quad s < t, \quad \frac{s}{t} \leq \theta \leq 1
		\end{align*}
		to see that
		\begin{align*}
			\|\ka_2\|_{L^3} \lesssim \|\ka_3\|_{L^2}^{\frac{1}{6}} \|\ka_2\|_{L^2}^{\frac{5}{6}}.
		\end{align*}
		Going back to the $L^1$-bound on $\RR_4$, we see that
		\begin{align*}
			\|\ka_3\|_{L^2}^2 \lesssim & \|\RR_4\|_{L^1} + |\I_4|
			\\
			\lesssim & \|\ka_2\|_{L^3}^3 + 1 + |\I_4|
			\\
			\lesssim & \|\ka_3\|_{L^2}^{\frac{1}{2}} \|\ka_2\|_{L^2}^{\frac{5}{2}} + \|\ka_2\|_{L^2}^3 + 1 + |\I_4|\\
			\lesssim & \eps \|\ka_3\|_{L^2}^2  + C_\eps \|\ka_2\|_{L^2}^{\frac{10}{3}} + \|\ka_2\|_{L^2}^3 + 1 + |\I_4|,
		\end{align*}
		where the last line follows from the Peter-Paul inequality. The implied constants in $\lesssim$ are independent of $\eps$, so by choosing $\eps$ sufficiently small, we have
		\begin{align*}
			(1 - \eps) \|\ka_3\|_{L^2}^2 \lesssim C_\eps \|\ka_2\|_{L^2}^{\frac{10}{3}} + \|\ka_2\|_{L^2}^3  + 1  + |\I_4|,
		\end{align*}
		the right-hand side of which is bounded from Lemmas \ref{kappa0} and \ref{k1}. We conclude that $\|\ka_3\|_{L^2}$ is bounded on $\M(\Omega_0)$.}
		\\
		\\
		\red{\textbf{Case 2.} $k \geq 5$. 
		We argue by induction on $k$. \red{Fix $k \in \N$ and assume that there exists a constant $C_{k-2}(\Omega_0) < \infty$ such that $\|\ka_i\|_{L^2} \leq C_{k-2}(\Omega_0)$ for $i = 1, \cdots, k-2$ on $\mathcal{M}(\Omega_0)$.} Since the exponent $p_{k-2}$ is integral, we see that the condition $k \geq 5$ forces $p_{k-2} \leq 2$. For $3 \leq j < k$, we have
		\begin{align*}
			\|\kappa_{k - j}\|_{L^\infty} \leq  \|\ka_{k - j +1 } \|_{L^1} \leq \ell^{\frac{1}{2}} \| \ka_{k - j + 1} \|_{L^2} \leq \ell^\half C_{k - 2}(\Omega_0),
		\end{align*}
		and for $j = k \geq 3$, $\ka_0$ is bounded in $L^\infty$ by Lemma \ref{kappa0}. We see that all derivatives are bounded in $L^\infty$ and hence, also in $L^1$, except for potentially $\ka_{k-2}$. Applying the Cauchy-Schwarz inequality $\|\ka_{k-2}\|_{L^1} \leq \ell^{\frac{1}{2}} \|\ka_{k-2}\|_{L^2}$, the right hand side is bounded by our induction hypothesis. It follows that the $L^1$-norm of any monomial in $\mathcal{R}_k$ can be bounded in terms of $\| \ka_{k-2} \|_{L^2}$ and the $L^\infty$-norms of lower order derivatives, all of which are bounded, in view of our induction hypothesis, on $\mathcal{M}(\Omega_0)$.}
	\end{proof}

	\red{
	In view of the well known Sobolev embedding $H^m \hookrightarrow C^k$ for $m - n/2 > k$ and the corresponding inequality
	\begin{align*}
		\|u\|_{C^k} \leq C(n, m, k) \sum_{j \leq m} \|\d^j u\|_{L^2} \leq C'(\M(\Omega), n, m, k).
	\end{align*}
	We conclude that the curvature is bounded in every $C^k$-norm on any marked length isospectral set. The Arzel\`a-Ascoli theorem then implies that the $C^\infty$-closure of $\mathcal{M}(\Omega_0)$ is \textit{compact} in $C^\infty$. We now show that $\mathcal{M}(\Omega_0)$ is in fact closed itself, from which Theorem \ref{cpt} follows.
	}

	\red{
	\begin{prop}
		\label{prop: MLIS is closed}
		For any $\Omega_0 \in \mathcal{B}$, the marked length isospectral set $\mathcal{M}(\Omega_0)$ is closed with respect to the $C^\infty$-topology on boundary curvatures.
	\end{prop}
	
	\begin{proof}
		Observe that the lower bound in Lemma \ref{kappa0} together with the upper bounds on Sobolev (and hence $C^k$) norms in Proposition \ref{unifbdd} show that any limit point of the marked length isospectral set is both smooth and strictly convex (with positive curvature), i.e. a Birkhoff billiard table. In other words, the $C^\infty$-closure of $\mathcal{M}(\Omega_0)$ is completely contained within $\mathcal{B}$. To show that $\mathcal{M}(\Omega_0)$ is closed itself, we need to show that the marked length spectrum of a limit point coincides with that of any approximating sequence. It is clear that $C^\infty$-convergence of curvatures implies $C^0$ (in fact, $C^\infty$)-convergence of generating functions, and continuity of the marked length spectrum as a map on $C^\infty$ domains follows directly from Lipschitz continuity of Mather's $\beta$-function with respect to the underlying generating functions in Proposition \ref{prop: Lipschitz dependence of beta}.
	\end{proof}
	}

	\section{Geometric and combinatorial preliminaries}\label{Geometric and combinatorial preliminaries}

	\subsection{Curvature coordinates}\label{curvecoord}
	
	Let us now choose a convenient coordinate system in which the curvature does not involve derivatives of a parametrization. Following \cite{MM}, we may rotate and translate our domain $\Omega$ so that it is tangent to the horizontal axis at the origin.  Denoting by $ds$ the arclength measure along $\d \Omega$ with $0 \leq s \leq |\d \Omega| := \ell$, we will call $\ka(s)$ the curvature and $\rho(s) = 1/\ka(s)$ the radius of curvature. {Denote by} $\theta$ the angle \red{between} the positively oriented tangent line {and} the horizontal axis. \red{It is clear that $s \in \R/\ell \Z$ and $\theta \in \R/2\pi \Z$ are in one-to-one correspondence, so we may freely change coordinates by writing $s(\theta)$ or $\theta(s)$. We can then parametrize $\d \Omega$ with respect to arclength by
	\begin{align}\label{eq: arclength parametrization}
		x(s) = \int_0^{s} \cos \theta(\tilde s) d \tilde s, \quad
		y(s) = \int_0^{s} \sin \theta(\tilde s) d \tilde s, \quad s \in \R/\ell \Z.
	\end{align}
	Differentiating \eqref{eq: arclength parametrization} with respect to $s$ shows that the tangent vector $(x'(s), y'(s))$ makes an angle of $\theta(s)$ with respect to the $x$ axis; the boundary $\d \Omega$ is then the envelope of these tangent lines. Note that $\theta'(s) = \ka(s)$, so we may equivalently write the above parametrization with respect to $\theta$ as
	\begin{align}
		\label{eq:radiusofcurvature coordinate}
		x(s(\theta)) = \int_0^{\theta} \rho(s(\tilde\theta)) \cos \tilde \theta d \tilde \theta, \quad
		y(s(\theta)) = \int_0^{\theta} \rho(s(\tilde{\theta})) \sin \tilde{\theta} d \tilde \theta,
	\end{align}
	for $\theta \in \R/2\pi\Z.$}
	The coordinates $(x(s), y(s))$ are called \textit{curvature coordinates} for $\d \Omega$. See Figure \ref{Billiard Table}.

	\subsection{Mather's-$\alpha$, $\beta$-functions and Marvizi-Melrose invariants}
	
	\red{In this section, we describe Marvizi and Melrose's perspective on convex billiards by means of an interpolating Hamiltonian and relate this approach to Mather's mean minimal action. Let us first note that the phase space for the billiard map is the coball bundle of the boundary $B^*\d  \Omega$, which is topologically a cylinder (see Figure \ref{Cylinder}).}

	\begin{theo}[\cite{MM}]
		If $\delta_\pm$ are boundary \red{(billiard)} maps of a strictly convex $C^\infty$ planar domain, there exists a $C^\infty$ function $\zeta \in C^\infty(T^* \d \Omega)$ which is a defining function for the positive half $S_+^*\d \Omega$ of the cosphere bundle \red{(i.e., $\zeta \geq 0$ in $B^* \d \Omega$ and $\{\zeta = 0\} = S_+^*\d \Omega$; see Figure \ref{Cylinder})} and satisfies
		\begin{align}\label{33}
			\delta_\pm \circ \exp\left( \pm \zeta^{1/2} X_\zeta\right) = \rho_\pm,
		\end{align}
		\red{where} $X_\zeta$ is the Hamiltonian vector field of $\zeta$ \red{and $\rho_\pm$ are} $C^\infty$ maps near $S_+^*\d \Omega$ which fix $S_+^* \d \Omega$ to infinite order. The Taylor series of $\zeta$ at $S_+^* \d \Omega$ is uniquely determined by the requirement \eqref{33}.
		\label{thm: interpolating hamiltonian}
	\end{theo}

	\begin{def1}\label{def:interpolating Hamiltonian}
		Such a function $\zeta$ is called an \textbf{interpolating Hamiltonian}.
	\end{def1}
	The idea is that \red{for $\tau \in \R$,} $\exp\left(\pm \tau \zeta^{1/2} X_\zeta \right)$ provides a continuous time flow which interpolates the discrete time billiard \textit{maps}, locally near glancing directions.
	
	\begin{rema}
		\red{If $\rho_\pm$ were to coincide with the identity in some open strip containing the boundary, then $\zeta$ would be a first integral for the billiard map and its level sets would constitute a foliation of $B^*\d\Omega$ by invariant curves, i.e., the billiard map would be locally integrable in the sense of Liouville-Arnold. As mentioned in Section \ref{Billiards}, this is false in general and conjecturally holds only for ellipses. However, this theorem does show that the billiard map is always \textit{nearly} integrable close to the boundary in phase space. Popov and Kovachev have applied the KAM theorem in this setting to both rederive and generalize the results of Lazutkin on the existence of invariant curves for the billiard map \cite{kovachev1990invariant}.}
	\end{rema}
	
	\begin{rema}
		\red{Writing out the Hamiltonian vector field $X_\zeta$ explicitly in Darboux coordinates, it is clear that
		\begin{align*}
			\zeta^{1/2} X_\zeta = X_{\frac{2}{3} \zeta^{3/2}},
		\end{align*}
		so to infinite order at the boundary, $\dt_\pm$ can be written as the \textit{time-one} maps of the Hamiltonian flow of a singular Hamiltonian $-\frac{2}{3} \zeta^{3/2}$. The square-root-type singularity of $\zeta^{3/2}$ at the boundary reflects an intrinsic geometric aspect of the billiard map; the reflection maps $\xi_\pm \mapsto \xi_\mp$ (in the notation of Section \ref{Billiards}) have a \textbf{Whitney-fold} over the glancing set $\{\sigma = \pm 1\}$ and convexity of the boundary implies that the glancing region is ``simple'' \cite{MelroseTaylor}. In this setting, Melrose derived a symplectic normal form for the hypersurfaces $S^* \R^2$ (which is the characteristic variety of the classical Hamiltonian $p(x,\xi) = |\xi|^2 - 1$) and $T_{\d \Omega}^*(\R^2)$ near the glancing region, where the Hamiltonian vector field of $p$ is tangent to $\d \Omega$ \cite{MelroseGlancingHypersurfaces1}, \cite{MelroseGlancingHypersurfaces2}. This leads to the existence of an interpolating Hamiltonian as in Theorem \ref{thm: interpolating hamiltonian}.}
	\end{rema}
	
	On {each level set $\{\zeta = t\} \subset \overline{B^*\d \Omega}$}, we define a one-form
	\begin{align*}
		dz = \frac{d  \zeta}{|d \zeta|^2} \circ J,
	\end{align*}
	where $J: T^*\d \Omega \to T^*\d \Omega$ is a clockwise rotation by $\pi/2$. It is dual to $X_\zeta$ {in the sense} that $dz(X_\zeta) = 1$.
	\begin{def1}\label{action}
		The \textbf{action integral} of nearly glancing orbits is {defined to be}
		\begin{align*}
			\I(t) = \int_{\zeta = t} dz
		\end{align*}
		\red{and the coefficients of its Taylor expansion at $t = 0$ are called \textbf{Marvizi-Melrose invariants:}
		\begin{align*}
			\I(t) \sim \sum_{k = 0}^\infty \frac{\I_{k+1}}{k!} t^k.
		\end{align*}
		}
	\end{def1}

	{To explain} the connection with Mather's $\beta$-function, {we first recall a result of Kovachev and Popov, adapted to our normalization conventions:}
	
	\begin{theo}[Theorem 2 in \cite{kovachev1990invariant}] \label{thm:Kovachev Popov}
		\red{Let $\Omega$ be a smooth strictly convex domain of boundary length $\ell$. Then there exists $\eps_0 > 0$ and an exact symplectic change of coordinates
			\begin{align*}
				\Phi: \d \Omega \times [1-\eps_0,1] \subset B^*\d \Omega \to& \R/\Z \times [\ell - \ell \eps_0, \ell]\\
				(s, \sigma) \mapsto& (\regphi, I),
			\end{align*}
			with $\regphi = s / \ell$ such that
			\begin{align*}
				\sigma ds = I d \regphi + dF,
			\end{align*}
			for some $F \in C^\infty (B^*\d \Omega)$ and in these coordinates, the billiard map
			$$
			\Phi \circ \dt_+ \circ \Phi^{-1} : (\regphi_0, I_0) \mapsto (\regphi_1, I_1)
			$$
			is generated by
			\begin{equation*}
				S(\regphi_0, I_1) = \regphi_0 I_1 - K(I_1)^{3/2} + R(\regphi_0, I_1),
			\end{equation*}
			i.e.,
			\begin{equation}
				\label{eq: KP generating function}
				\begin{cases}
					I_0 = \partial_1 S = I_1 + \partial_1 R, \\
					\regphi_1 = \partial_2 S = \regphi_0 - \frac{3}{2} K(I_1)^{1/2} K'(I_1) + \partial_2 R.
				\end{cases}
			\end{equation}
			Here, $K \in C^\infty(\R)$ with $K(\ell) = 0$, $K'(\ell) < 0$, and $R \in C^\infty(\R / \Z \times \mathbb{R})$. Moreover, there exists a Cantor set $\mathcal{C}^* \subset [\ell - \ell \eps_0, \ell]$ with $\ell \in \mathcal{C}^*$ such that $R \equiv 0$ on $\R / \Z \times \mathcal{C}^*$. The set $\mathcal{C}^*$ can be chosen so that for each $N \in \N$, there exists a positive constant $c_N$ such that
			\begin{align*}
				\ell \eps - c_N \eps^N \leq \text{Leb} \left (\mathcal{C}^* \cap [\ell - \ell \eps, \ell] \right), \qquad 0 \leq \eps \leq \eps_0.
		\end{align*}}
	\end{theo}
	\red{See also Theorem 3.2.20 in \cite{Si}. We now have the following} connection to Mather's $\beta$-function
	
	\begin{prop}\label{prop: Kovachev Popov coordinates and caustic length lazutkin expansion}
		{With respect to the} symplectic coordinates $(\regphi, I)$ {of Popov and Kovachev,} the function 
		$$
		\red{\zeta(s,\lambda)} = \left(\frac{3}{2} \alpha(- I(s, \lambda)) \right)^{\frac{2}{3}}
		$$
		is an interpolating Hamiltonian for the billiard map, where $\alpha$ is the convex conjugate (Legendre-Fenchel transform) of Mather's $\beta$-function. Furthermore,
		\begin{align*}
			 \red{\left|\Gamma_{\omega(Q)}\right| = |\d \Omega| + \int_0^{\zeta} \I (\eta) d \eta \Big|_{\zeta = \left(\frac{3}{2} Q \right)^{\frac{2}{3}}}},
		\end{align*}
		whenever ${\{(\regphi, I) : \alpha(-I) = Q\}}$ corresponds to a caustic $\Gamma_\omega$ of rotation {number} $\omega$ and corresponding Lazutkin parameter $Q$. In particular, $|\Gamma_{\omega(Q)}|$ has an asymptotic expansion in $t = \left(\frac{3}{2} Q\right)^{2/3}$ as $t$ (equivalently $Q$) tends to zero, with coefficients equal to \red{nonzero universal multiples of} the Marvizi-Melrose invariants $\I_k$:
		\begin{align*}
			\left|\Gamma_{\omega(Q)} \right| \sim \ell + \sum_{k=1}^\infty \frac{1}{k!}  \left(\frac{3}{2}\right)^{\frac{2k}{3}} \I_k Q^{\frac{2k}{3}}.
		\end{align*}
	\end{prop}

\begin{figure}
	\centering
	\includegraphics{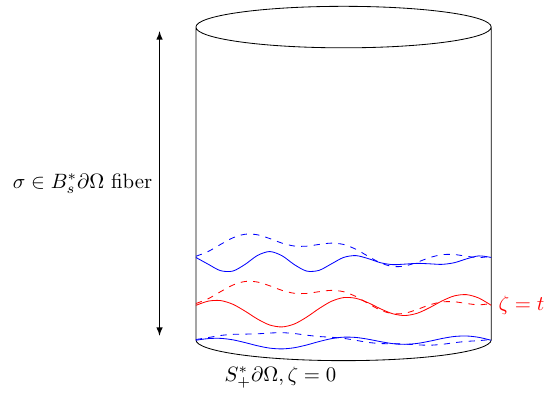}
	\caption{The phase space $\overline{B^* \d \Omega}$ of the billiard maps $\dt_\pm$. The red and blue curves are ``invariant tori,'' corresponding to caustics in the interior of $\Omega$. By Theorem \ref{thm:Siburg Lipschitz graph}, they are topologically circles and are given by graphs of Lipschitz functions of $s$. By Proposition \ref{prop: Kovachev Popov coordinates and caustic length lazutkin expansion}, they are also level sets of the interpolating Hamiltonian $\zeta$. The lower boundary $S_+^* \d \Omega$ belongs to the glancing region.}
	\label{Cylinder}
\end{figure}

	\begin{rema}
		The jet of $\I(t)$ at $t = 0$ (which corresponds to the {glancing set}), consists of marked length spectral invariants {which are equal} to the $\I_{k}$ in {Definition \ref{action}} and {formula} \eqref{SIK}. These are uniquely determined and do not depend on the choice of interpolating Hamiltonian. {From Proposition \ref{prop: Kovachev Popov coordinates and caustic length lazutkin expansion}, we also see that the invariants $\I_k$} are algebraically equivalent to the Taylor coefficients of Mather's $\beta$-function. 
	\end{rema}
	
	\red{Since a proof of Proposition \ref{prop: Kovachev Popov coordinates and caustic length lazutkin expansion} does not occur elsewhere in the literature, we use Theorem \ref{thm:Kovachev Popov} to provide one here.}
	
	\begin{proof}[Proof of Proposition \ref{prop: Kovachev Popov coordinates and caustic length lazutkin expansion}]
	
	\red{Our proof will use four sets of symplectic coordinates on $B^* \d \Omega$: $(s, \sigma)$, $(\regphi, I)$, $(z, \zeta)$ and $(\psi, J)$, which will be defined shortly. We begin with the symplectic coordinates $(\regphi,I)$ of Kovachev and Popov in Theorem \ref{thm:Kovachev Popov} above. Note that on an invariant curve $\Gamma$ of the billiard map, we have
	\begin{align*}
		|\Gamma| = \int_{\Gamma} \sigma ds = \int_\Gamma I d \regphi + dF = I \int_\Gamma d \regphi = I.
	\end{align*}
	Recall the identity $\alpha'(-|\Gamma|) = \omega$ from Theorem \ref{thm: caustic length lazutkin expansion}, where $\alpha$ is Mather's $\alpha$-function and $\omega$ is the rotation number of an orbit along $\Gamma$. Indeed, since $\alpha$ and $\beta$ are convex conjugates of one another,
	\begin{align*}
		\omega = \alpha'(\beta'(\omega)) = \alpha'(- |\Gamma|) = \alpha'(-I).
	\end{align*}
	This implies that $\regphi_0 I_1 - \alpha(-I_1)$ is another generating function for the billiard map $\dt_+$ on the Cantor set of invariant circles accumulating at $S_+^* \d \Omega$:
	\begin{align*}
		\begin{cases}
			I_0 = \d_1 \left(\regphi_0 I_1 - \alpha(- I_1)\right) = I_1,\\
			\regphi_1 = \d_2 \left(\regphi_0 I_1 - \alpha(- I_1) \right) = \regphi_0 + \omega.
		\end{cases}
	\end{align*}
	(See the discussion on pages 52-54 of \cite{Si}). Since $\alpha(- \ell) = K(\ell) = 0$, where $K$ is the smooth function appearing in Kovachev and Popov's generating function $S$ (Theorem \ref{thm:Kovachev Popov}), it follows that $K(I) = \alpha(-I)^{2/3}$ whenever $I \in \mathcal{C}^*.$ Recall also from Theorem \ref{thm: caustic length lazutkin expansion} that $\alpha(\beta'(\omega)) = \alpha(-|\Gamma|) = Q$, the Lazutkin parameter from Definition \ref{laz}, whenever there exists a caustic of rotation number $\omega$ and Lazutkin parameter $Q$.}
	\\
	\\
	\red{We now show that
	\begin{align*}
		\zeta(s,\lambda) = \left( \frac{3}{2} \alpha (-I(s, \lambda)) \right)^{\frac{2}{3}}
	\end{align*}
	is an interpolating Hamiltonian for the billiard map $\dt_+$. Indeed, the time $\tau$ flow of this Hamiltonian maps
	\begin{align*}
		\regphi \mapsto \regphi +\tau \frac{\d }{\d I} \left( \frac{3}{2} \alpha (- I(s, \lambda)) \right)^{\frac{2}{3}} = \regphi - \tau \left(\frac{2}{3}\right)^{\frac{1}{3}} \alpha^{- \frac{1}{3}}(-I) \frac{\d \alpha}{\d I}(-I).
	\end{align*}
	Setting $\tau = - \zeta^{1/2}$, we see that the billiard map sends $\regphi \mapsto \regphi + \omega$ in these coordinates, which verifies the claim on the Cantor set $\mathcal{C}^*$ from Theorem \ref{thm:Kovachev Popov}. Since the remainder $R$ in Theorem \ref{thm:Kovachev Popov} vanishes on $\mathcal{C}^*$, which accumulates at $I = \ell$ with super-polynomial density, its Taylor series at $I=\ell$ is zero. Hence, the difference between the time $-\zeta^{1/2}$ flow map and the billiard map $\dt_+$ is flat at the glancing set, which shows that $\left(\frac{3}{2} \alpha(-I)\right)^{2/3}$ is indeed an interpolating Hamiltonian as in Definition \ref{def:interpolating Hamiltonian}.}
	\\
	\\
	\red{Let us now find a further symplectic change of coordinates which allows us to write the length of a caustic in terms of our interpolating Hamiltonian.	 Following Section 6 in \cite{kovachev1990invariant}, we let $M_0 = \{s = s_0\}$ for some $s_0 \in \R/\ell \Z$. $M_0$ is a transversal curve to the Hamiltonian flow of $\zeta$. Now consider the flowout
	\begin{align*}
		M_0 \ni \rho_0 \mapsto \rho = \exp \tau X_\zeta \rho_0, \qquad \tau \in \R.
	\end{align*}
	Since the level sets $\{\zeta = t\}$ are immersed circles, the orbit $\exp \tau X_\zeta \rho$ is periodic. To find its period $T$, let $\gamma(\tau) = \exp \tau X_\zeta \rho_0$ parametrize the curve $\{\zeta = t\}$ with $0 \leq \tau \leq T$. Using $\dot \gamma = X_\zeta$ and the normalization $dz (X_\zeta) = 1$, we have
	\begin{align}\label{eq:period of exp}
		T = \int_0^T dz (X_\zeta) d \tau = \int_\gamma dz = - \int_{\zeta = t} dz = - \I(t),
	\end{align}
	($\gamma$ is parametrized with the opposite orientation compared to the interpolating Hamiltonian flow). In other words, each circle corresponds to a Hamiltonian orbit of period
	\begin{align*}
		- \I(\zeta(\rho_0)) = - \int_{\zeta = \zeta(\rho_0)} dz,
	\end{align*}
	which is minus the action integral from Definition \ref{action}.}
	\\
	\\
	\red{The inclusion map $\{\zeta = t\} \hookrightarrow B^*\d \Omega$ does not automatically extend $dz$ to a one-form on $B^*\d \Omega$ since its action on transversal vectors is a priori undefined. Nevertheless, $(z, \zeta)$ still defines a local coordinate system near $M_0$ on $B^*\d \Omega$; to each $\rho \in B^*\d \Omega$, we associate the coordinates
	\begin{align*}
		\zeta = \zeta(\rho), \qquad z = \int_{\rho_0}^{\rho} dz,
	\end{align*}
	where $\rho_0 = M_0 \cap \{\zeta = \zeta(\rho)\}$ and the $dz$ integral is taken along the segment of the curve $\{\zeta = t\}$ connecting $\rho_0$ to $\rho$. However, this coordinate system does \textit{not} extend globally. For different choices of $\rho_0 \in M_0$, the closed orbits $\{\exp \tau X_\zeta \rho_0\}$ have different periods. We instead introduce a further local symplectic change of variables which can be extended globally:
	\begin{align}
		\label{eq:Jpsi}
		(z, \zeta) \mapsto (\psi, \J) := \left(\frac{z}{\I(\zeta)}, \J(z, \zeta) \right) \in \R/\Z \times \R.
	\end{align}
	The new action variable $\J$ is defined so as to make the change of variables symplectic. We compute
	\begin{align}\label{dtheta}
		d \psi = - \frac{z \I'(\zeta)}{\I^2(\zeta)} d\zeta + \frac{1}{\I(\zeta)} dz, 
	\end{align}
	and set
	\begin{align}\label{new action}
		\J(\zeta) = \int_0^\zeta \I(\eta) d \eta,
	\end{align}
	to ensure that $d \J \wedge d \psi = d \zeta \wedge d z$ locally. Note that $\dim H_{\text{dR}}^1(B^* \d \Omega) = 1$. Since $\J d\psi$ and $\sigma ds$ are both primitives of the symplectic form, their cohomology classes differ by a multiple of the angular one-form $ds$. To compute this multiple, we integrate along $S_+^*\d \Omega$ to see that
	\begin{align*}
		 \int_{S_+^* \d \Omega} \sigma ds - \J d \psi = \int_{S_+^* \d \Omega} ds = |\d \Omega|,
	\end{align*}
	(using the fact that $\J$ vanishes on $S_+^*\d \Omega = \{\zeta = 0\}$), from which it follows that
	\begin{align*}
		[\sigma ds]_{\text{dR}} = [\J d\psi + ds]_{\text{dR}},
	\end{align*}
	where $[\cdot]_{\text{dR}}$ denotes the cohomology class. A related formula appears in Equation 6.2 of \cite{Amiran}. Note that $\J$ is also first integral of the Hamiltonian flow of $\zeta$, so we have
	\begin{align*}
		|\Gamma| = \int_{\Gamma} \sigma ds = \int_{\Gamma} \J d \psi + ds = |\d \Omega| + \J \int_{\Gamma} d \psi = |\d \Omega| + \int_0^{\zeta} \I(\eta) d \eta.
	\end{align*}
	The first part of the proposition then follows from writing $\zeta$ in terms of Mather's $\alpha$-function and recalling that $\alpha$ coincides with $Q$ on the set of lengths of caustics for the billiard map.} \red{For the second part of the proposition, we plug in $\zeta = (\frac{3}{2} Q)^{\frac{2}{3}}$, expand $\I(\eta)$ at $\eta = 0$, and integrate term by term:
		\begin{align*}
			|\Gamma| \sim |\d \Omega| + \sum_{k = 1}^\infty \int_0^\zeta \I_k \frac{\eta^{k-1}}{(k - 1)!} d \eta = |\d \Omega| + \sum_{k = 1}^{\infty} \frac{\I_k}{k!} \left(\frac{3}{2} Q\right)^{\frac{2k}{3}}.
		\end{align*}	
	}
		
	\end{proof}

	\begin{exam}
		\red{Let us compute the asymptotics described in Proposition \ref{prop: Kovachev Popov coordinates and caustic length lazutkin expansion} in the case of the unit disk $\Omega = B(0,1) \subset \R^2$. Caustics are concentric circles $\d B(0,r)$ for $0 < r < 1$ and elementary geometry shows that Mather's $\beta$-function is given by
		\begin{align}\label{eq:betadisk}
			\beta_{B(0,1)}(\omega) = - 2 \sin \pi \omega.
		\end{align}
		A short computation (\cite{Si}, pg. 48) shows that the convex conjugate of $\beta$ is given by
		\begin{align}\label{eq:alphadisk}
			\alpha(-I) = \frac{1}{\pi}\left( -I \arccos \left(\frac{I}{2\pi}\right) + \sqrt{ 4 \pi^2 - I^2}\right).
		\end{align}
		Recall that for a caustic $\Gamma$ of rotation number $\omega$, we have $\beta'(\omega) = -|\Gamma|$ and $\alpha(-|\Gamma|) = Q$, the Lazutkin parameter. Integrability of the billiard map in a disk is reflected in the smoothness of both \eqref{eq:betadisk} and \eqref{eq:alphadisk}. As a function of $(2\pi - I)$, the right hand side of \eqref{eq:alphadisk} has an asymptotic expansion in half integer powers, starting with a constant multiple of $(2\pi - I)^{3/2}$. Inverting this gives an expansion of $I = |\Gamma|$ in powers of $\alpha^{2/3}$ (equivalently, $Q^{2/3}$), as is described in Theorem \ref{thm: caustic length lazutkin expansion} and Proposition \ref{prop: Kovachev Popov coordinates and caustic length lazutkin expansion} above. Since the curvature of the boundary is constant for a disk, all of its derivatives vanish and one cannot find the coefficient in Theorem \ref{main} from the asymptotics of \eqref{eq:alphadisk} alone. However, one can directly check that the zeroth-order derivative terms do match those in Sorrentino's formulas \eqref{SIK}; see \eqref{eq:iotavalues} below for the precise values of $\iota_k$.}
	\end{exam}

	\subsection{Hamiltonian formulation}\label{subsec: Hamiltonian formulation}
	To compute the invariants $\I_k$, we use Darboux coordinates $(s,\lambda)$ with $0 \leq s \leq \ell$ being arclength along $\d \Omega$ and $\lambda = 1 - \sigma = 1 - \cos(\phi)$. This is simpler than the {non-symplectic} coordinates {$(s,\phi)$} used in \cite{MM}, {which would generate additional terms when differentiated in computations below}. The symplectic form is then given by $\omega = d s \wedge d \lambda$ and the Hamiltonian vector field is
	\begin{align}\label{eq:Hamiltonian vector field}
		X_\zeta = \frac{\d \zeta}{\d s} \frac{\d }{\d \lambda} - \frac{\d \zeta}{\d \lambda} \frac{\d }{\d s}.
	\end{align}
	The billiard map satisfies $\dtp \sim \exp(- \zeta^{1/2} X_\zeta)$ and maps $(s, \lambda)$ to $(s^+, \lambda^+)$. {Taylor expanding as $\lambda \to 0$ (i.e., near the glancing region $\{\sigma = 1\} = \{\zeta = 0\}$) gives}
	\begin{align}\label{sp}
		\begin{split}
			s^+ \sim \sum_{k = 0}^\infty \frac{(-1)^k}{k!} (\zeta^{1/2} X_\zeta)^k s + O(\lambda^{\infty}),
		\end{split}
	\end{align}
	with $s$ being the first coordinate function. {The notation $\sim$ signifies that
	\begin{align*}
		s^+ - \sum_{k = 0}^N \frac{(-1)^k}{k!} (\zeta^{1/2} X_\zeta)^k s = O_N(\lambda^\frac{N+1}{2}),
	\end{align*}
	as $\lambda \to 0$.} \red{The first few terms are given by
	\begin{align*}
		s^+  \sim s + \zeta^{1/2} \frac{\d \zeta}{\d \lambda} + \frac{1}{2} \zeta \left(\frac{\d \zeta}{\d \lambda} \frac{\d^2 \zeta}{\d s \d \lambda} - \frac{\d \zeta}{\d s} \frac{\d^2 \zeta}{\d \lambda^2}\right) + \cdots
	\end{align*}}
	The expansion \eqref{sp} is valid as a consequence of the spectral theorem for self-adjoint operators; $A = i \zeta^{1/2} X_\zeta$ is a self-adjoint differential operator with respect to the measure $dsd\lambda$, which allows us to define the unitary group $\exp(it A)$ via the functional calculus. Stone's theorem then guarantees that this operator is the unique operator with infinitesimal generator $A$, and hence coincides with pullback by the time $t$ flow map: for $f \in C^\infty(T^* \d \Omega)$, $\exp(- t \zeta^{1/2} X_\zeta) f = f(\Phi_{-t}(s, \lambda))$ where $\Phi_t$ is the Hamiltonian flow of $\frac{2}{3} \zeta^{3/2}$. Computation of the $k^{\text{th}}$-order operator $X_\zeta^k$ will be carried out combinatorially in Section \ref{Small lambda asymptotics}. This amounts to computing the coefficients of iterated Poisson brackets, or equivalently a Lie series.

\subsection{Cohomological considerations and curvature polynomials}
\label{subsec: cohomological considerations and curvature polynomials}

We {now provide an algorithm for reducing the structure of Marvizi-Melrose invariants to the form appearing in Theorem \ref{main}. Observe that for $1 \leq k \leq 4$,} the integral invariants $\I_k$ in \eqref{SIK} depend only on {derivatives of $\kappa$ up to order $k-1$. Each of the highest order derivatives} $\kappa_{k-1}$ {appears} quadratically (see Definition \ref{def:linearquadratic}) {and in Example \ref{rmk: nonuniqueness}, we showed how integration by parts allowed us to relegate all other terms to the remainder $\mathcal{R}_k$ in Theorem \ref{main}}. {We will show in Proposition \ref{intinv} below that from the action integral} $\I(t)$ in Definition \ref{action}, {we can read off the coefficient} $\I_{k+1}$ in terms of the Taylor coefficients of $\zeta$. These {in turn} will be shown in Sections \ref{Small lambda asymptotics} and \ref{Integral invariants} to depend on derivatives of $\kappa$ up to order $2k$. Moreover, the top order term $\kappa_{2k}$ will appear linearly, {meaning that it is multiplied by an undifferentiated power of $\kappa$ (Definition \ref{def:linearquadratic})}. \red{Our strategy will be to combine the formulas for $\I_k$ in terms of $\zeta$ and those relating $\zeta$ to the curvature $\kappa$, integrating by parts to reduce the order of the maximal derivatives until we arrive at the structure in Theorem \ref{main}.}
\\
\\
\red{Let us now develop an integration by parts algorithm for systematically reducing the order of the maximal derivative appearing in the integrand of each $\I_k$ while retaining the structure observed in the first five invariants. To avoid writing too many integrals, we instead work at the level of differential forms and note that since $\d \Omega$ is a closed curve, cohomologous one-forms integrate to the same quantity. We call this procedure \textbf{differentiation by parts}.} \red{In what follows, we will refer to polynomials in the variables $\{\ka^{\pm \frac{1}{3}}, \ka_1, \cdots, \ka_m\}$ as \textbf{curvature polynomials} or ``polynomials in the curvature jet,'' with the understanding that their coefficients are constant multiples of powers (including negative powers) of $\ka^{\frac{1}{3}}$. We also extend this to one-forms $\q ds$ whose coefficient $\q$ is a curvature polynomial in the sense just described.}

\begin{prop}\label{IBPalgorithm}
	Let $\q = \kappa_0^{p_0} \kappa_1^{p_1} \cdots \kappa_m^{p_m}$ be a monomial in the \red{variables $\{\ka^{\pm \frac{1}{3}}, \ka_1, \cdots, \ka_m\}$, having} differential degree $D \geq 2$ (cf. Definition \ref{diffdeg}) and \red{at least one factor of $\ka_i, i \geq 1$.} Denote by $\q ds$ the associated one-form, where $ds \in \wedge^1(\d \Omega)$ is the arclength one-form. \red{Suppose that $m, p_m \geq 1$ and let}
	\begin{align*}
		m^* =
		\begin{cases}
			\max \{ i : i < m, p_i \neq 0\}, & \text{if}\{ i : i < m, p_i \neq 0\} \neq \emptyset,\\
			\red{0}, & \red{\text{if }\{ i : i < m, p_i \neq 0\} = \emptyset}
		\end{cases}
	\end{align*}
	\red{be the largest derivative index strictly less than $m$, with $m^* = 0$ if no such derivative occurs} and set
	\begin{align*}
		e =
		\begin{cases}
			m^* (p_{m^*} -1) + \sum_{i = 1}^{{m^* - 1}} i p_i, & m^* \geq 1,\\
			0 & m^* = 0.
		\end{cases} 
	\end{align*}
	 \red{Then,}
	 \begin{enumerate}
	 	\item \red{If $p_m = 1$ and $m - m^*$ is even, then} $\q ds$ is cohomologous to a one-form
	 	$$
	 	\red{r ds = \ka_{\frac{D- e}{2}}^2 u ds + v ds},
	 	$$
	 	\red{where $u, v$ are polynomials in the curvature jet of differential degree at most $e$ and $D$ respectively, where $u$ contains $\kappa$-derivatives of order at most $m^*$ and $v$ contains $\ka$-derivatives of order at most $\frac{D-e}{2} - 1$.} \red{If in addition $m^* \geq 1$ and $p_{m^*} = 1$, then $u$ can be chosen to depend on at most $m^* - 1$ derivatives of $\ka$.}
	 	
	 	\item \red{If $p_m = 1$ and $m - m^*$ is odd, then $\q ds$ is cohomologous to a one-form $v ds$, where $v$ has differential degree at most $D$ and contains $\kappa$-derivatives of order less than or equal to $\frac{D - e - 1}{2}$.}
	 	
	 	\item \red{If $p_m  = 2$ and $m^* = 0$, then $\q ds$ is already of the form $\ka_m^2 \ka_0^{p_0} ds$.}
	 	
	 	\item \red{If $p_m = 2$ and $m^* \geq 1$, then $\q ds$ already contains $\ka$-derivatives of order strictly less than $\frac{D - e}{2}$.}
	 		
	 	\item \red{If $p_m \geq 3$, then $\q ds$ already contains $\ka$-derivatives of order strictly less than $\frac{D - e}{2}$.}

	 \end{enumerate}
\end{prop}

\begin{rema}
	Note that $e + m^* + m p_m = D$, so $e$ represents the excess number of derivatives left after separating out the maximal derivatives and a single copy of the second highest derivative. Furthermore, if $p_m = 1$, then $\frac{D-e}{2} = \frac{m + m^*}{2}$ is the \textit{midpoint} between the maximal and subleading derivative.
\end{rema}

\begin{proof}
	\red{We will prove the lemma case by case.}
	\\
	\\
	\red{\textbf{Case 1.} $m - m^*$ is even and $p_m = 1$. We have
	\begin{align*}
		\q ds =& \ka_0^{p_0} \ka_1^{p_1} \cdots \ka_{m^*}^{p_{m^*}} \ka_m ds
		\\
		=& - \kappa_{m - 1} d \left(\kappa_0^{p_0} \kappa_1^{p_1} \cdots \kappa_{m^*}^{p_{m^*}} \right) + d  \left(\kappa_0^{p_0} \kappa_1^{p_1} \cdots \kappa_{m^*}^{p_{m^*}} \kappa_{m-1} \right)
		\\
		=& - p_{m^*} \kappa_{m-1} \ka_{m^* + 1} \ka_{m^*}^{p_{m^*} - 1} \left(\kappa_0^{p_0} \kappa_1^{p_1} \cdots \kappa_{m^* - 1}^{p_{m^* - 1}} \right)ds
		\\
		& - \ka_{m-1} \ka_{m^*}^{p_{m^*}} d\left(\kappa_0^{p_0} \kappa_1^{p_1} \cdots \kappa_{m^* - 1}^{p_{m^* - 1}}\right)
		\\
		& + d \left(\kappa_0^{p_0} \kappa_1^{p_1} \cdots \kappa_{m^*}^{p_{m^*}} \kappa_{m-1} \right),
	\end{align*}	
	which reduces the order of the highest derivative by one, modulo an exact remainder {(i.e., one that integrates to zero over the closed curve $\d \Omega$).} If $m^* = 0$, the product over indices $1, \cdots, m^*-1$ in the exact term is interpreted as one and the exterior derivative $d$ lands only on $\ka^{p_0}$, producing a factor of $\ka_1$. The second highest derivative has also increased by one, so the total differential degree remains constant while the gap between $m^*$ and $m$ is reduced by two; $\q ds$ is cohomologous to a term of the form
	\begin{align*}
		r_1 ds = \ka_{m-1} \ka_{m^*+1} u_1 ds +   \ka_{m-1} v_1 ds + d w_1 ,
	\end{align*}
	where $u_1$ has differential degree at most $e$, $v_1$ has differential degree at most $D- m + 1 = e + m^* + 1$, and $d w_1$ is an exact differential. Neither $u_1$ nor $v_1$ contain any derivatives of $\kappa$ having order greater than $m^*$ and if $p_{m^*} =1$, then it is clear that $u_1$ depends on at most $m^* - 1$ derivatives of $\ka$. In contrast to our notation $\ka_m = \ka^{(m)}$, the use of subscripts on $r,u,v$ and $w$ is \textit{not} intended to imply differentiation; it is a way of indexing the functions. Repeating the procedure above, we see that $r_1 ds$ (and hence $\q ds$) is cohomologous to a term of the form
	\begin{align*}
		r_2 ds = \ka_{m-2} \ka_{m^* + 2} u_2 ds + \ka_{m-2} v_2 ds + d w_2,
	\end{align*}
	where $u_2 = u_1$ has differential degree at most $e$, depending on derivatives of $\kappa$ having order less than or equal to $m^*$ (or $m^* - 1$ in the case $p_{m^*} = 1$), and $v_2$ has differential degree at most $D - m + 2$, depending on no more than $m^* + 1$ derivatives of $\ka$. The term $d w_2$ is another exact differential. Using the fact that $m - m^*$ is even, after finitely many (in fact, ${\frac{m-m^*}{2}}$) repetitions of this procedure, $\q ds$ is seen to be cohomologous to a term of the form
	\begin{align}\label{eq:D-e over 2 iterate}
		r_{\frac{m-m^*}{2}} ds = \ka_{\frac{m+m^*}{2}}^2 u_{\frac{m-m^*}{2}} ds + \ka_{\frac{m+m^*}{2}} v_{\frac{m-m^*}{2}} ds + d w_{\frac{m-m^*}{2}},
	\end{align}
	where $u = u_{\frac{m-m^*}{2}} = u_1$ has differential degree at most $e$, depending on derivatives of order less than or equal to $m^*$ (or $m^* - 1$ in the case $p_{m^*} = 1$ and $m^* \geq 1$), and $v_{\frac{m-m^*}{2}}$ has differential degree at most $D - m + {\frac{m-m^*}{2}}$, depending on $\ka$ derivatives of order less than or equal to $m^* + {\frac{m-m^*}{2}} - 1$. Recall that $e + m^* + p_m m = D$ and hence, if $p_m = 1$, then
	\begin{align*}
		\frac{m+m^*}{2} = \frac{D - e}{2}
	\end{align*}
	is the {midpoint} between the highest $(m)$ and second highest $(m^*)$ order of $\ka$-derivative appearing in $\q$. The first term $\ka_{\frac{D-e}{2}}^2 u$ appears in Part 1 of the lemma. $v_{\frac{m-m^*}{2}}$ depends on at most $\frac{D- e}{2} - 1$ derivatives of $\ka$ and contains two possible types of terms which we must consider separately.}
	\\
	\\
	\red{\textbf{Type A terms. If some monomial $f$ in $v_{\frac{m-m^*}{2}}$ contains a term of the form $\ka_{\frac{m+m^*}{2} - 1}^{p}$ for some exponent $p \geq 1$, then its appearance in the second term of \eqref{eq:D-e over 2 iterate} can be reduced to the form
	\begin{align}\label{eq:vtilde}
		\begin{split}
			\ka_{\frac{m+m^*}{2}} f ds =& \frac{1}{p+ 1} d 	\left(\ka_{\frac{m+m^*}{2} - 1}^{p + 1} \right) \wt f
			\\
			=& - \frac{1}{p + 1}  \ka_{\frac{m+m^*}{2} - 1}^{p+ 1} d \wt f
			+  d \left(\frac{1}{p+1} \ka_{\frac{m+m^*}{2} - 1}^{p + 1} \wt f \right),
		\end{split}
	\end{align}}
	where $\wt f$ consists of the remaining terms in $f$ (besides $\ka_{\frac{m+m^*}{2} - 1}^{p}$), having differential degree at most $\frac{D+e}{2} - p \left(\frac{m-m^*}{2} - 1 \right)$ and depending on $\ka$ derivatives of degree less than or equal to $\frac{m+m^*}{2} - 2$. The first term in the second line of \eqref{eq:vtilde} has differential degree at most $D$.	Let us write this as
	\begin{align}
	\label{eq:wh v and wh w}
	\ka_{\frac{m+m^*}{2}} f ds = {\wh f} ds + d \wh w,
	\end{align}
	where $\wh f$ has differential degree at most $D$, containing less than or equal to $\frac{D-e}{2} - 1$ derivatives of $\ka$, and $d \wh w$ is an exact differential.}
	\\
	\\
	\red{\textbf{Type B terms.}  If some monomial $g$ in $v_{\frac{m-m^*}{2}}$ does \textit{not} contain a factor of $\ka_{\frac{m+m^*}{2} - 1}$, then its appearance in the second term in \eqref{eq:D-e over 2 iterate} is of the form
	\begin{align}
		\label{eq:typeB}
		\begin{split}
			\ka_{\frac{m+m^*}{2}} g ds =& - \ka_{\frac{m+m^*}{2} - 1} dg + d \left(\ka_{\frac{m+m^*}{2} - 1} g \right)
			\\
			=& \wh g ds + d \wh {w'},
		\end{split}
	\end{align}
	where $\wh g$ has differential degree at most $D$, containing less than or equal to $\frac{D-e}{2} - 1$ derivatives of $\ka$, and $d \wh {w'}$ is another exact differential.
	\\
	\\
	Combining \eqref{eq:wh v and wh w} and \eqref{eq:typeB} with \eqref{eq:D-e over 2 iterate}, we see that both Type A and Type B terms satisfy the requirements of $v ds$ in Part (1) of the lemma.}
	\\
	\\
	\red{\textbf{Case 2: $m - m^*$ is odd and $p_m = 1$.} If $m - m^* = 1$, then $\q ds$ is of the form
		\begin{align}
			\label{eq:gap size one}
			\begin{split}
			\ka_m \ka_{m-1}^{p_{m-1}} \ka_{m-2}^{p_{m-2}} \cdots \ka_1^{p_1} \ka_0^{p_0} ds
			= &
			\frac{1}{p_{m-1} + 1} d \left(\ka_{m-1}^{p_{m-1} + 1}\right) \ka_{m-2}^{p_{m-2}} \cdots \ka_1^{p_1} \ka_0^{p_0}
			\\
			= & - \frac{1}{p_{m-1} + 1} \ka_{m-1}^{p_{m-1} + 1} d \left(\ka_{m-2}^{p_{m-2}} \cdots \ka_1^{p_1} \ka_0^{p_0}\right)
			\\
			& + d  \left(\frac{1}{p_{m-1} + 1} \ka_{m-1}^{p_{m-1} + 1} \ka_{m-2}^{p_{m-2}} \cdots \ka_1^{p_1} \ka_0^{p_0}\right).
		\end{split}
		\end{align}
		These are exactly the Type A terms handled in Case 1. Modulo the exact term
		$$
		d  \left(\frac{1}{p_{m-1} + 1} \ka_{m-1}^{p_{m-1} + 1} \ka_{m-2}^{p_{m-2}} \cdots \ka_1^{p_1} \ka_0^{p_0}\right),
		$$
		all derivatives of $\ka$ which appear in \eqref{eq:gap size one} are of order less than or equal to $\frac{D - e - 1}{2}$ and all monomials have differential degree at most $D$, hence satisfying the requirements in the lemma for $m - m^*$ odd.
		\\
		\\
		If $m - m^* \geq 3$ is odd, we repeat the procedure described in Case $1$, but now stopping when the difference between the maximal and second highest derivatives is one. In this case, $\q ds$ is cohomologous to a term of the form
	\begin{align*}
		\q ds =&
		\ka_{m-1} \ka_{m^*+1} \wh u_1 ds +   \wh v_1 ds + d \wh w_1
		\\
		=& \ka_{m-2} \ka_{m^*+2} \wh u_2 ds +   \wh v_2 ds + d \wh w_2
		\\
		=& \cdots
		\\
		=& \ka_{\frac{D - e + 1}{2}} \ka_{\frac{D - e - 1}{2}} \wh u_{\frac{D - e - 1}{2} - m^*} ds + \wh v_{\frac{D - e - 1}{2} - m^*} ds + d \wh w_{\frac{D - e - 1}{2} - m^*},
	\end{align*}
	where $\wh u_i, \wh v_i$ satisfy the same properties as $u_i, v_i$ in Case $1$ and $d \wh w_i$ are exact differentials. The same reasoning as in the $m - m^* = 1$ case applies, giving
	\begin{align*}
		\q ds  = & \frac{1}{2} d \left(\ka_{\frac{D - e -1}{2}}^2 \right) \wh u_{\frac{D - e - 1}{2} - m^*} + \wh v_{\frac{D - e - 1}{2} - m^*} ds + d \wh w_{\frac{D - e - 1}{2} - m^*} 
		\\
		= & - \frac{1}{2} \ka_{\frac{D - e -1}{2}}^2  d \left( \wh u_{\frac{D - e - 1}{2} - m^*}  \right) + \wh v_{\frac{D - e - 1}{2} - m^*} ds + d \wh w_{\frac{D- e - 1}{2} - m^*} 
		\\ & + d \left(\frac{1}{2} \kappa_{\frac{D - e - 1}{2}}^2 \wh u_{\frac{D - e - 1}{2} - m^*} \right).
	\end{align*}
	Each term above is either an exact differential or has differential degree at most $D$, containing only $\ka$-derivatives of order less than or equal to $\frac{D -e - 1}{2}$.}
	\\
	\\
	\red{\textbf{Case 3.} If $p_m  = 2$ and $m^* = 0$, then by the definition of $m^*$, $\q ds$ is of the form $\ka_m^2 \ka_0^{p_0} ds$. This is the same as the maximally reduced one-forms appearing terms in Case 1 if the excess $e$ is zero.}
	\\
	\\
	\red{\textbf{Case 4: $p_m = 2$ and $m^* \geq 1$}. The identity $e + m^* + p_m m= D$ implies that
	\begin{align*}
		m = \frac{D - e - m^*}{p_m} < \frac{D - e}{2},
	\end{align*}
	so $\q ds$ is already in reduced form.
	\\
	\\
	\textbf{Case 5: $p_m \geq 3$}. Again, the equality $e + m^* + p_m m = D$ again implies that
	\begin{align*}
		m = \frac{D - e - m^*}{p_m} \leq \frac{D - e}{3} < \frac{D - e}{2},
	\end{align*}
	and once again, $\q ds$ is already in reduced form.}
\end{proof}

\begin{def1}
	\label{def:excess differential gap reduced form}
	We will call $e$ the \textbf{excess} of a monomial and $m - m^*$ the \textbf{differential gap}. A monomial in the curvature jet is said to be in \textbf{\red{reduced form}} if its highest derivative appears at least quadratically \red{(i.e., raised to a power strictly greater than one)}.
\end{def1}

\red{
\begin{rema}
	If a monomial one-form in the curvature jet is in reduced form, further application of the differentiation by parts algorithm described in the proof of Proposition \ref{IBPalgorithm} will only increase the order of maximal derivatives.
\end{rema}
}

\begin{exam}\label{ex: monomial 1}
	\red{Consider the monomial $\kappa \kappa_3 \kappa_7$. It has differential degree $D = 10$, excess $e = 0$, and differential gap $m - m^* = 7-3 = 4$. Hence, it can be transformed as follows:
		\begin{align*}
			\ka_7 \ka_3 \ka ds
			= &
			d\left(\ka_6\right) \ka_3 \ka
			\\
			=&
			- \ka_6 d \left(\ka_3 \ka \right) + d \left(\ka_6 \ka_3 \ka \right)
			\\
			= &
			-\ka_6 \ka_4 \ka ds - \ka_6 \ka_3 \ka_1 ds + d \left(\ka_6 \ka_3 \ka \right)
			\\
			=&
			- d\left(\ka_5\right) \ka_4 \ka - d \left(\ka_5\right) \ka_3 \ka_1 + d \left(\ka_6 \ka_3 \ka \right)
			\\
			= &
			\ka_5 d \left(\ka_4 \ka \right) + \ka_5 d \left(\ka_3 \ka_1\right)
			+
			d \left(\ka_6 \ka_3 \ka - \ka_5 \ka_4 \ka - \ka_5 \ka_3 \ka_1 \right)
			\\
			=&
			\ka_5^2 \ka ds + \ka_5 \ka_4 \ka_1 ds + \ka_5 \ka_4 \ka_1 ds + \ka_5 \ka_3 \ka_2 ds
			\\
			& +
			d \left(\ka_6 \ka_3 \ka - \ka_5 \ka_4 \ka - \ka_5 \ka_3 \ka_1 \right)
			\\
			=&
			\ka_5^2 \ka ds +  d \left( \ka_4^2 \right) \ka_1 + d \left(\ka_4\right) \ka_3 \ka_2
			+
			d \left(\ka_6 \ka_3 \ka - \ka_5 \ka_4 \ka - \ka_5 \ka_3 \ka_1 \right)
			\\
			=&
			\ka_5^2 \ka ds - \ka_4^2 d\left( \ka_1\right) - \ka_4 d\left( \ka_3 \ka_2\right)
			\\
			& +
			d \left(\ka_6 \ka_3 \ka - \ka_5 \ka_4 \ka - \ka_5 \ka_3 \ka_1 + \ka_4^2 \ka_1 + \ka_4 \ka_3 \ka_2 \right)
			\\
			=&
			\ka_5^2 \ka ds - \ka_4^2 \ka_2 ds - \ka_4^2 \ka_2 ds - \ka_4 \ka_3^2 ds
			\\
			& +
			d \left(\ka_6 \ka_3 \ka - \ka_5 \ka_4 \ka - \ka_5 \ka_3 \ka_1 + \ka_4^2 \ka_1 + \ka_4 \ka_3 \ka_2 \right).
		\end{align*}
		Implementation of the algorithm in Proposition \ref{IBPalgorithm} involved stripping off only two derivatives from $\ka_7$ because that increased the second highest derivative $\ka_3$ to $\ka_5$ while decreasing that of $\ka_7$ to $\ka_5$. Moving any additional derivatives from $\ka_7$ to $\ka \ka_3$ would increase the order of the latter to six. As in the statement of Proposition \ref{IBPalgorithm}, the highest derivative has order equal to one half of the differential degree and it appears quadratically:
		\begin{align*}
			\int_0^\ell \ka_7 \ka_3 \ka ds = \int_0^\ell \left(\ka_5^2 \ka- 2 \ka_4^2 \ka_2 - \ka_4 \ka_3^2\right) ds.
		\end{align*}
		}
		
\end{exam}

Moving forward, we will obtain the invariants $\I_k$ as integrals of polynomials in the curvature jet with coefficients in \red{the ring} $\R[\ka^{\pm 1/3}]$. The form of these integrals is not unique, as one can integrate by parts arbitrarily many times, corresponding to cohomologous one-forms \red{(cf. Example \ref{rmk: nonuniqueness})}.

\begin{def1}
	\label{def:linearquadratic}
	We will call a curvature \red{monomial} \textbf{linear} \red{or \textbf{first order}} if \red{it is of the form $c \ka_0^{p_0} \ka_m$ for some $c \in \R, p_0 \in \frac{1}{3} \Z$ and $m \in \N_{> 0}$, i.e. it has a single term which is differentiated and it occurs with a power of one. Similarly, we will call a curvature monomial \textbf{second order}} if it is of the form $c \ka_0^{p_0} \ka_i \ka_j$ for some $i, j > 0$ and \red{\textbf{higher order} if there are \red{at least three} differentiated terms, i.e., for a monomial $\ka_0^{p_0} \ka_1^{p_1} \cdots \ka_m^{p_m}$,
	\begin{align*}
		\sum_{i = 1}^m p_i = & 2 \iff \text{second order,}
		\\
		\sum_{i = 1}^m p_i \geq& 3 \iff \text{higher order.}
	\end{align*}
}
	\red{The $i^{\text{th}}$ derivative is said to appear \textbf{quadratically} if the monomial is of the form $c \ka_0^{p_0} \ka_i^2$.} \red{Higher order terms will be denoted $\hot$.}
\end{def1}

	 {If the excess $e$ is positive, then Proposition \ref{IBPalgorithm} allows us to reduce the order of the maximal derivative to $\frac{D - e}{2}$, which is \textit{strictly} less than one half the differential degree.} The key observation is the following.
	 
\begin{coro}\label{threeterms}
	Amongst all curvature-jet \red{monomial} one-forms having differential degree $D$, those with at least three differentiated \red{factors} are cohomologous to another which has at most the same differential degree and contains only derivatives of order strictly less than $D/2$.
\end{coro}

\begin{proof}
\red{Suppose that $\q = \ka_0^{p_0} \ka_1^{p_1} \cdots \ka_{m}^{p_m}$ is a monomial of differential degree $D$. If there are at least three differentiated factors (including multiplicity greater than one), denote the highest by $m$, the second highest by $m^*$ and the third highest by $m^{**}$. If $p_m \geq 3$ or if $p_m = 2$ and $m^* \geq 1$, then Parts 4 and 5 of Proposition \ref{IBPalgorithm} tell us that $\q$ contains $\ka$-derivatives of order strictly less than $\frac{D - e}{2} \leq \frac{D}{2}$. If $p_m = 1$ and there are two other $\ka$-derivatives, then the excess $e$ is positive, in which case Proposition \ref{IBPalgorithm} again tells us that $\q ds$ is cohomologous to a one-form $r ds$ where $r$ has differential degree at most $D$ and contains $\ka$-derivatives of order at most $\frac{D- e}{2} < \frac{D}{2}$.}
\end{proof}

\red{This corollary is very useful for the following reason: when integrating the Marvizi-Melrose integral invariants $\I_k$ by parts to obtain the structure postulated in Theorem \ref{main}, we can relegate all terms with at least three differentiated terms and differential degree at most $2k - 2$ to the remainder $\mathcal{R}_k$. Let us see an example which illustrates this.}

\begin{exam}\label{ex: monomial 2}
	\red{Consider the monomial $\kappa_1^3 \kappa_7$. Just like the monomial in Example \ref{ex: monomial 1}, this has differential degree $10$, but there are now \textit{four} differentiated terms: $\ka_7$ and three copies of $\ka_1$. The excess is $e = 2$ and the differential gap is again $m - m^* = 7-1 = 6$. The monomial is transformed via ``differentiation by parts'' as follows:
	\begin{align*}
		\ka_1^3 \ka_7 ds
		=& 
		- d (\ka_1^3) \ka_6 + d \left(\ka_1^3 \ka_6\right)
		\\
		= & -3 \ka_1^2 \ka_2 \ka_6 ds + d \left(\ka_1^3 \ka_6\right)
		\\
		= & d\left(3 \ka_1^2 \ka_2\right) \ka_5 + d \left(\ka_1^3 \ka_6 - 3 \ka_1^2 \ka_2 \ka_5 \right)
		\\
		= & 6 \ka_1 \ka_2^2 \ka_5 ds + 3 \ka_1^2 \ka_3 \ka_5 ds + d \left(\ka_1^3 \ka_6 - 3 \ka_1^2 \ka_2 \ka_5 \right)
		\\
		= & - 6 d (\ka_1 \ka_2^2) \ka_4 - 3 d \left(\ka_1^2 \ka_3 \right) \ka_4
		\\
		& + d \left(\ka_1^3 \ka_6 - 3 \ka_1^2 \ka_2 \ka_5 + 6 \ka_1 \ka_2^2 \ka_4 + 3 \ka_1^2 \ka_3 \ka_4 \right)
		\\
		=& -6 \ka_2^3 \ka_4 ds - 12 \ka_1 \ka_2 \ka_3 \ka_4 ds - 6 \ka_1 \ka_2 \ka_3 \ka_4 ds - 3 \ka_1^2 \ka_4^2 ds
		\\
		& + d \left(\ka_1^3 \ka_6 - 3 \ka_1^2 \ka_2 \ka_5 + 6 \ka_1 \ka_2^2 \ka_4 + 3 \ka_1^2 \ka_3 \ka_4 \right)
		\\
		= & 6 d \left(\ka_2^3\right) \ka_3 - 9 \ka_1 \ka_2 d \left(\ka_3^2\right)- 3 \ka_1^2 \ka_4^2 ds
		\\
		& + d \left(- 6 \ka_2^3 \ka_3 + \ka_1^3 \ka_6 - 3 \ka_1^2 \ka_2 \ka_5 + 6 \ka_1 \ka_2^2 \ka_4 + 3 \ka_1^2 \ka_3 \ka_4 \right)
		\\
		= & 18 \ka_2^2 \ka_3^2 ds + 9 \ka_2^2 \ka_3^2 ds + 9 \ka_1 \ka_3^3 ds - 3 \ka_1^2 \ka_4^2 ds
		\\
		& + d \left(- 6 \ka_2^3 \ka_3 - 9 \ka_1 \ka_2 \ka_3^2+ \ka_1^3 \ka_6 - 3 \ka_1^2 \ka_2 \ka_5 + 6 \ka_1 \ka_2^2 \ka_4 + 3 \ka_1^2 \ka_3 \ka_4 \right).
	\end{align*}
	The positive excess allowed us to reduce the highest order derivative to \textit{less than half} of the differential degree: $\frac{D - e}{2} = 4 < \frac{D}{2} = 5$.
	}
\end{exam}

\begin{def1}
	\label{def:harmless}
	\red{A polynomial $\q$ in the curvature jet is said to be \textbf{$(D,i)$-harmless}, $D,i \in \N$, if the differential degree of $\q$ is at most $D$ and all of its constituent monomials which have differential degree \textit{equal} to $D$ contain at least $i$ factors of $\kappa$ derivatives having order at least one. We will write $F = G + \mathcal{H}_{D,i}$ to say that $F - G$ is $(D,i)$-harmless.}
\end{def1}

\red{
In view of Corollary \ref{threeterms}, $(D,i)$-harmless terms can often be absorbed into the remainder $\mathcal{R}_m$ in Theorem \ref{main}.

\begin{coro}
	\label{cor:harmlessremainders}
	If a curvature polynomial $\q$ is $(D,i)$-harmless with $i \geq 3$, then $\q ds$ is cohomologous to another polynomial one-form $u ds$, where $u$ has differential degree at most $D$ and all derivatives in $u$ have order strictly less than $D/2$.
\end{coro}

}

\red{The algorithm described in Proposition \ref{IBPalgorithm} is perhaps more important than the statement of the proposition itself. To recapitulate, the strategy involves peeling derivatives off the highest derivative of $\ka$ and applying them to the second highest derivative at the expense of an exact remainder. Assuming the maximal derivative appears with an exponent of one, this decreases the highest order derivative by one and increases the second highest derivative by one. After finitely many iterations, one arrives at the midpoint between the highest and second highest derivatives and that term is squared.}

	\section{Small $\lambda$ asymptotics of $\delta_{+}$}
	\label{Small lambda asymptotics}
	
	\red{Recall the definition of the interpolating Hamiltonian $\zeta$ in Theorem \ref{thm: interpolating hamiltonian}. It} is a smooth function of $\lambda = 1 - \sigma$ \red{(where $\sigma = \cos \phi$, cf. Sections \ref{Billiards} and \ref{subsec: Hamiltonian formulation})}, or equivalently of $\phi^2$. \red{It} has a formal Taylor (Borel) expansion near $\lambda = 0$, given by
	\begin{align}
	\label{eq:taylor exp of zeta}
		\zeta(s , \lambda) \sim \sum_{i = 1}^{\infty} \zeta_i(s) \lambda^i.
	\end{align}
	Recall \red{also} that $t = (3Q/2)^{2/3}$, \red{a function of} the Lazutkin parameter \red{and hence,} the asymptotics $\lambda \to 0, t \to 0, \phi \to 0, \sigma \to 1$ are all equivalent. Equation \eqref{sp} will be used to expand $s^+$ in terms of $\lambda$. Expanding powers of $\zeta^{1/2} X_\zeta$ in $\lambda$ yields an asymptotic expansion of the form
	\begin{align}\label{eq: splus in terms of s and lambda}
		s^+ \sim s + \sum_{\red{M} = 1}^\infty \red{A_M(s)} \lambda^{M/2},
	\end{align}
	where the coefficients $\red{A_M(s)}$ depend nonlinearly on \red{the jet of} $\zeta_i$ for $1 \leq i \leq \left \lfloor \frac{M + 1}{2} \right \rfloor$, \red{with derivatives of order at most $M-1$ appearing} (see Proposition \ref{ZA} below). \red{Throughout this section, we will use the variable $M$ to index powers of $\lambda^{1/2}$ in \eqref{eq: splus in terms of s and lambda} and when $M$ is odd, we write $M = 2m - 1$. We will then use $K$ to index powers of the Hamiltonian vector field $X_\zeta$ and when $K$ is odd, we will write $K = 2k+1$.}
	\\
	\\
	The first two terms \footnote{There is a small misprint in their paper. The coefficient of $\ka_1^2$ in $\zeta_2$ should instead be $-32$.} were computed in \cite{MM} \red{(pg. 485, equation (4.5)):}
	\begin{align}\label{smallzeta}
		\begin{split}
			\zeta_1 &= 2 \kappa^{-2/3},\\
			\zeta_2 &= \frac{1}{15} \kappa^{-2/3} - \frac{32}{135} \kappa^{-14/3} \kappa_1^2  + \frac{8}{45} \kappa^{- {11}/{3}} \kappa_2,
		\end{split}
	\end{align}
	and by a related computation \footnote{The coefficient preceding $\phi^3$ is incorrect. Nonetheless, the formulas for $\I_1$ and $\I_2$ are essentially correct, although $\I_1$ should be divided by four and $\I_2$ should be multiplied by $2$.} adapted to the coordinate system $(s, \lambda)$,
		\begin{align}\label{smallA}
		\begin{split}
			A_1 &= \zeta_1^{\frac{3}{2}},
			\\
			A_2 &= \frac{1}{2} \dot{\zeta}_1 \zeta_1^2,
			\\
			A_3 &= \frac{5}{2} \zeta_1^{\frac{1}{2}} \zeta_2 + \frac{1}{6} \zeta_1^{\frac{5}{2}}\dot{\zeta}_1^2 + \frac{1}{6} \zeta_1^{\frac{7}{2}} \ddot{\zeta}_1.
		\end{split}
	\end{align}
	\\
	\\
	\subsection{Outline of the algorithm leading to Theorem \ref{main}}\label{subsec: Outline of the algorithm}
	\red{We now pause to outline the remainder of the paper and indicate how it leads to the proof of Theorem \ref{main}.
	\begin{enumerate}
		\item \red{In Section \ref{sec: I in terms of zeta} below, we begin by showing that the action integral $\mathcal{I}(t)$ from Definition \ref{action}} can be written as an integral of rational functions of the Taylor coefficients of the interpolating Hamiltonian $\zeta$. \red{The $m^{\text{th}}$ Marvizi-Melrose invariant $\I_m = \frac{d^{m-1}}{dt^{m-1}} \I(t) |_{t = 0}$ will depend on $\zeta_1, \cdots, \zeta_m$ (see Proposition \ref{intinv}).}
		\\
		\item \red{In Section \ref{sec: Computing AM Geometrically}, we show that the coefficients} $A_{M}(s)$ from \eqref{eq: splus in terms of s and lambda} can be computed geometrically in terms of the curvature jet \red{(see Proposition \ref{AM}).}
		\\
		\item \red{In Section \ref{sec: computing AM Algebraically}, we show that the coefficients have a triangular structure:} for each $m$, we \red{can} compute $\zeta_m$ in terms of the coefficients \red{$A_{2m-1}$} together with $\zeta_1, \cdots, \zeta_{m-1}$ \red{(see Proposition \ref{ZA}).}
		\\
		\item \red{Combining Steps (2) and (3) above, we obtain a recursive procedure for computing each $\zeta_m$ in terms of the curvature and its derivatives. We then plug these coefficients into the formula for $\I_m$ in terms of $\zeta$ from Step (1). This is done in Sections \ref{sec: linear terms with maximal derivatives} and \ref{sec: quadratic terms of maximal differential degree}.}
		\\
		\item \red{In Section \ref{subsec: integration by parts}, we carry out the extremely labor-intensive task of integrating by parts the formulas from Step (4), using the algorithm for reduction of derivatives described in Section \ref{subsec: cohomological considerations and curvature polynomials}.}
		\\
		\item We finish the proof of Theorem \ref{main} in Section \ref{subsec: nonvanishing of the leading order coefficient} by showing that the leading order coefficient of $\ka_{m-1}^2$ in the integrand of $\I_m$ is nonvanishing.
	\end{enumerate}
	}

	\subsection{Computing $\I_k$ in terms of $\zeta$} \label{sec: I in terms of zeta}
	If we define \red{$\lambda(s,t)$}  implicitly so that $\zeta(s, \lambda(s,t)) = t$, then $\I(t)$ can be written in coordinates as
	\begin{align}\label{iit}
		\begin{split}
			\I(t) =& \int_0^\ell \frac{d  \zeta\red{(s,\lambda(s,t))} \circ J}{|d \zeta\red{(s,\lambda(s,t))}|^2} \left(1, \frac{\d \lambda}{\d s}(s,t)\right)^T ds
			\\
			=& -\int_0^\ell  \left(\frac{\d \zeta}{\d \lambda} \red{(s,\lambda(s,t))} \right)^{-1} ds,
		\end{split}
	\end{align}
	where the last equality follows from the relation
	\begin{align*}
		\frac{\d }{\d s} \zeta(s, \lambda(s,t)) = 0 \implies  \frac{\d \lambda}{\d s}(s,t) = - \frac{\d \zeta}{\d s}(s,\lambda(s,t)) \left(\frac{\d \zeta}{\d \lambda}(s,\lambda(s,t))\right)^{-1}.
	\end{align*}

	\red{It is easy to see that
	\begin{align*}
		\I_1 = - \int_0^\ell \zeta_1^{-1} (s) ds.
	\end{align*}
	To find the structure of higher order terms, it is convenient to introduce the following definition.
	
	\begin{def1}\label{def:zeta weight}
		Let
		\begin{align*}
			\mathcal{Q} = c \prod_{i = 1}^m \prod_{j =0}^{r} \left(\frac{d^j \zeta_i}{d s^j} \right)^{p_{ij}}
		\end{align*}
		be a monomial in the $r$-jet of $\zeta_1^{\pm 1/2}, \zeta_2, \zeta_3, \cdots, \zeta_m$ with $c \in \R$. We define its \textbf{$\zeta$-weight} to be the quantity
		\begin{align*}
			w_\zeta(\q ) = \sum_{i = 1}^m \sum_{j = 0}^r p_{i j} (2 i - 2).
		\end{align*}
		For a polynomial in the variables $\{\zeta_1^{\pm 1/2}, \zeta_2, \zeta_3, \cdots, \zeta_m\}$, we define its $\zeta$-weight to be the maximum $\zeta$-weight of all constituent monomials.
	\end{def1}
	\red{The notion of $\zeta$-weight will be important in Section \ref{Integral invariants}, especially in Corollary \ref{cor:zeta weight bounds ddeg} and its application to formulas \eqref{eq:Imzetakappa} and \eqref{eq:Imzetakappaupsilon}.} The following lemma characterizes highest $\zeta$-weight monomials in higher order Marvizi-Melrose invariants.}
	
	\begin{prop}\label{intinv}
		\red {For $m \geq 2$,} the integral invariants have the form
		\begin{align*}
			 \I_{m} = \frac{d^{m-1}}{dt^{m-1}} \I(t) \big|_{t = 0} = \int_0^\ell \Theta_{m}[\zeta](s) ds,
		\end{align*}
		where $\Theta_{m}$ is a polynomial in $\zeta_1^{\pm 1}, \zeta_2, \zeta_3, \cdots, \zeta_{m}$ of the form
		\begin{align*}
			\Theta_m =&  m! \zeta_1^{-m-1}  \zeta_{m} - (m-1)! \zeta_1^{-m-2} \sum_{j =1}^{m-2} (j + 1) (m - j)\zeta_{j+1} \zeta_{m - j}\\
				& - \sum_{\ell = 1}^{m-2}\sum_{i = 0}^{\ell - 1} \zeta_1^{-m-2}  \frac{(m - 1 - \ell +i )! \ell!}{i!} (m-\ell) (\ell+1) \zeta_{m - \ell} \zeta_{\ell + 1} + \mathcal{R}_m^\Theta[\zeta].
		\end{align*}
		Here, $\mathcal{R}_m^\Theta[\zeta]$ is a polynomial remainder term depending only on $\zeta_1^{\pm 1}, \cdots, \zeta_{m-1}$ and having the following properties:
		\begin{itemize}
			\item \red{The $\zeta$-weight of any $\mathcal{R}_m^\Theta$-monomial is at most} $2m - 2$. 
			
			\item \red{If an $\mathcal{R}_m^\Theta$-monomial has $\zeta$-weight} equal to $2m - 2$, then at least three of its $\zeta$-indices are greater than or equal to $2$.
		\end{itemize}
	\end{prop}

\begin{proof}
	The integrand of $\I(t)$ takes the form
	\begin{align*}
		\begin{split}
			\left(\frac{\d \zeta}{\d \lambda} \right)^{-1} &= \frac{1}{\sum_{i = 1}^\infty i \zeta_i (s) \lambda^{i -1}  } = \frac{{\zeta_1}^{-1}}{1 + \sum_{i = 2}^\infty i \left(\frac{\zeta_i}{\zeta_1} \right) \lambda^{i -1}}\\
			&=  \sum_{m = 0}^\infty \left(  \frac{1}{\zeta_1} \sum_{k = 0}^m (-1)^k \sum_{\substack{j_1 + \cdots + j_k = m,\\ {j_r \geq 1}}} \prod_{i = 1}^{k} (j_i +1) \wt{\zeta}_{j_i + 1} \right) \lambda^m\\
			&: = \sum_{m = 0}^\infty b_m \lambda^m,
		\end{split}
	\end{align*}
	where we have used the notation $\wt{\zeta}_j = \zeta_j / \zeta_1$. Let us denote the function above by
	\begin{align}\label{eq:bm}
		f(\lambda) \red{\sim} \sum_{m = 0}^\infty b_m \lambda^m \quad \red{\text{as } \lambda \to 0, \qquad b_m = \frac{1}{m!} \frac{d^m f}{d \lambda^m} \Big|_{\lambda = 0},}
	\end{align}
	with the understanding that $\lambda$ depends implicitly on \red{both $s$ and} $t$. From the identity $\zeta(s, \lambda(s,t)) = t$, it follows that
	\begin{align}
		\begin{split}
			1 &= \frac{\d}{\d t} \zeta(s, \lambda(s,t)) = \frac{\d \zeta}{\d \lambda} \frac{\d \lambda}{\d t},\\
			&\implies \frac{\d \lambda}{\d t} = \left(\frac{\d \zeta}{\d \lambda}\right)^{-1} = f(\lambda).
		\end{split}
	\end{align}
	We then have functions $f$ and $\lambda$ such that $\frac{\d}{\d t} f(\lambda(s,t)) = f'(\lambda(s,t)) f(\lambda(s,t))$. To compute the \red{Taylor} coefficients \red{of $f(\lambda(s,t))$ with respect to $t$}, observe the \red{pattern}:
	\begin{align*}
		\frac{\d}{\d t} f(\lambda) &= f'(\lambda) \frac{\d \lambda}{\d t} = f'(\lambda) f(\lambda),\\
		\frac{\d^2}{\d t^2} f(\lambda) &= f''(\lambda)f^2(\lambda) + f'(\lambda)^2 f(\lambda),\\
	\end{align*}
	and for higher $N$, we have
	\begin{align*}
		\frac{\d^N}{\d t^N} f(\lambda(s,t)) = \left(f (\lambda)\frac{d}{d \lambda} \right)^N f(\lambda)\big|_{\lambda = \lambda(s,t)}.
	\end{align*}
	Powers of differential operators have been extensively studied in the combinatorics literature. We use the following formula due to Comtet:
	\begin{lemm}[\cite{Comtet}, \red{pg. 166}]\label{comtet}
		Let $f: \R \to \R$ be a smooth function. We have
		\begin{align*}
			\left(f(\lambda) \frac{d}{d \lambda}\right)^{\red{N}} = \sum_{\ell = 1}^N A_{N, \ell}[f] \frac{d^\ell}{d \lambda^\ell},
		\end{align*}
		where the coefficients are given by
		\begin{align*}
			A_{N,\ell}[f] = \sum_{\textbf{k} \in P_{N,\ell}} \frac{f_0}{\ell!} \prod_{j = 1}^{N-1} \left(j + 1 - k_1 - \cdots - k_j\right) \frac{f_{k_j}}{k_j!}, \qquad f_{k_j} :=  \left(\frac{d}{d \lambda}\right)^{k_j} f(\lambda),
		\end{align*}
		and $P_{N,\ell}$ is the set
		\begin{align*}
			P_{N,\ell} = \left\{ \textbf{k} \in \Z_{\geq 0}^{N-1}: \sum_{j = 1}^{N-1} k_j = N - \ell, \quad \sum_{j = 1}^p k_j \leq p, \hspace{0.1in} \text{for all} \hspace{0.1in} 1 \leq p \leq N-1 \right\}.
		\end{align*}
	\end{lemm}
	\red{Note that $f_k$ are functions of $\lambda$ rather than constants; in particular, $f_0(\lambda) \big|_{\lambda = 0} = b_0$ in the notation \eqref{eq:bm}.} For each $\ell$, we have sums of products of the $b_m$; those in the coefficients $A_{N,\ell}$ have indices summing to $N - \ell$, while $\frac{\d^\ell}{\d \lambda^\ell} f(\lambda) |_{\lambda = 0}$ gives a multiple of $b_\ell$. Hence, all terms have indices summing to $N$. The terms having a single nonzero index $k_j$ for which $|\textbf{k}| = N -\ell$ are $\textbf{k} = (N-\ell) e_{N-\ell}, \cdots, (N-\ell) e_{N-1}$, where $e_i \in \Z^{N-1}$ is the standard basis vector. When $\ell < N$, their contribution to $A_{N,\ell}$ is then
	\begin{align*}
		\sum_{i = 0}^{\ell - 1} & \frac{f_0}{\ell!} \left(\prod_{j = 1}^{N - \ell + i - 1} (j+1 )\frac{f_0}{0!} \right) \left(\prod_{j = N - \ell + i}^{N - 1} (j + 1 - (N- \ell))  \frac{f_{k_j}}{k_j!}\right) \\
		=& \sum_{i = 0}^{\ell - 1} \frac{f_0^{N-1}}{\ell!} (N - \ell +i )! \frac{\ell!}{i!} \frac{f_{N- \ell}}{(N-\ell)!}\\
		=&  \sum_{i = 0}^{\ell - 1} \frac{(N - \ell +i )!}{ i ! (N- \ell)!} f_0^{N-1} f_{N- \ell}.
	\end{align*}

	In particular,	all terms in
	\begin{align*}
		\frac{d^N}{d t^N} \left(\frac{\d \zeta}{\d \lambda}\right)^{-1} &= \left(\sum_{m = 0}^\infty b_m \lambda^m \frac{d}{d \lambda}\right)^N \sum_{m = 0}^\infty b_m \lambda^m
		\\
		&= \sum_{\ell = 1}^N A_{N, \ell}\left[\sum_{m = 0}^\infty b_m \lambda^m\right] \frac{d^\ell}{d \lambda^\ell} \sum_{m = 0}^\infty b_m \lambda^m,
	\end{align*}
	which have no more than two \red{factors with} nonzero $b_j$-indices when evaluated at $\lambda = 0$, equivalently $t = 0$, are of the form
	\begin{align*}
		N! b_0^N b_N  + \sum_{\ell = 1}^{N-1} \sum_{i = 0}^{\ell - 1} b_0^{N-1}  \frac{(N - \ell +i )! \ell!}{i!} b_{N - \ell}b_\ell.
	\end{align*}
	It is clear that $b_0 = \zeta_1^{-1}$ and we take the terms with maximal $\zeta$-indices in $b_{N-\ell}, b_\ell$:
	\begin{align*}
		b_N &=  - \zeta_1^{-2} (N+1) \zeta_{N + 1} + \zeta_1^{-3} \sum_{j =1}^{N-1} (j + 1) (N - j + 1)\zeta_{j+1} \zeta_{N - j +1} + \hot,
		\\
		b_{N-\ell} b_\ell &=  \left(- \zeta_1^{-2} (N-\ell +1) \zeta_{N - \ell + 1}\right) \left(- \zeta_1^{-2} (\ell+1) \zeta_{\ell + 1}\right) + \hot,
	\end{align*}
	\red{where $\hot$ denotes higher order terms, which satisfy the conditions required for $\mathcal{R}_m^\Theta$ stated in the Lemma with $N = m-1$.\footnote{\red{It is not immediately clear that these terms are \textit{higher order} in the sense of Definition \ref{def:linearquadratic}, but after solving for $\zeta_m$ in terms of curvature in the next section, we will see that they are indeed $(2m-2,3)$-harmless in the sense of Definition \ref{def:harmless}; see Theorem \ref{Linear coeff}.}}} Combining, we have
	\begin{align*}
		\frac{d^N}{d t^N} \left(\frac{\d \zeta}{\d \lambda}\right)^{-1} \bigg|_{ t = 0}
		&
		=
		- (N+1)! \zeta_1^{-N-2}  \zeta_{N + 1}
		\\
		&
		+ N! \zeta_1^{-N-3} \sum_{j =1}^{N-1} (j + 1) (N - j + 1)\zeta_{j+1} \zeta_{N - j + 1}
		\\
		&
		+ \sum_{\ell = 1}^{N-1}\sum_{i = 0}^{\ell - 1} \zeta_1^{-N-3}  \frac{(N - \ell +i )! \ell!}{i!} (N-\ell +1) (\ell+1) \zeta_{N - \ell + 1} \zeta_{\ell + 1}
		\\
		&+ \hot
	\end{align*}
	
	Putting $N = m-1$ and recalling that
	\begin{align*}
		\I(t) = - \int_0^\ell \left(\frac{\d \zeta}{\d \lambda}(s,t) \right)^{-1}ds
	\end{align*}
	completes the proof of the lemma.
\end{proof}

	\subsection{Computing $A_M$ geometrically}\label{sec: Computing AM Geometrically}

	We already know the formulas for $\zeta_1, \zeta_2$ as well as $A_1, A_2, A_3$ and we will see below that the terms $A_{M}$ are always given by algebraic functions in the curvature jet. We can then use this structure recursively to find a general expression for $\zeta_m$, which has a similar form. In keeping track of maximal derivatives on the curvature in $\zeta_m$, it will also be important to keep track of the maximal derivatives in $A_{2m - 1}$.
	\\
	\\
	In Section \ref{curvecoord}, we fixed a gauge corresponding to tangency at the origin and chose the coordinate $\theta$ which is a primitive of the curvature. \red{We will use $\theta_p = \ka_{p-1}$ to denote the $p$-th derivative of $\theta$ with respect to $s$.} By rotation and translation invariance, it suffices to compute the local expansion \eqref{sp} at the origin $s = 0$. The goal is to equate \red{powers of $\lambda$ on both sides of the equation}
	\begin{align}\label{ypxp}
		\tan(0 + \phi) = \frac{\sqrt{\lambda(2 - \lambda)}}{1 - \lambda} = \frac{\int_0^{s^+} \sin \theta(t) dt}{\int_0^{s^+} \cos\theta(t) dt } \sim \sum_{p = 1}^\infty c_p[\theta] (s^+)^p,
	\end{align}
	with $s^+$ expressed in terms of $\lambda$ as in \eqref{sp}. The coefficients $c_p[\theta]$ are \red{polynomials in the derivatives of $\theta$ with respect to $s$, evaluated at} $s = 0$, where $\theta(0) = 0$. \red{Using $\theta_1(s) = \kappa(s)$,} these relations will allow us to recursively find $\zeta_i(s)$, which can then be plugged into \eqref{iit} and integrated by parts into the form appearing in Theorem \ref{main}.

	\begin{lemm}\label{cp}
		The \red{coefficients} $c_p[\theta]$ \red{depend only on $\theta_1, \cdots, \theta_{p}$ (equivalently, $\ka, \ka_1, \cdots, \ka_{p-1}$) at $s = 0$  and} have the form $c_p[\theta] = \wt{c}_p[\theta_1]$ where $\theta_1 = \kappa$ and
		\begin{align*}
			\red{c_p[\theta]} = \wt{c}_p[\kappa] = \frac{\kappa_{p-1} }{(p+1)!}+ \RR_{p}^c[\kappa].
		\end{align*}
		The remainder $\RR_{p}^c$ has differential degree at most $p - 3$ as a polynomial in the derivatives of $\kappa$.
	\end{lemm}
	
	\begin{proof}
		As $s= s^+ = 0$ corresponds to $\theta=0$, we can expand the quotient of integrals in the expression for $\tan \phi$, giving an asymptotic expansion in $s^+$ of the form
		\begin{align}\label{tanphi}
			\tan \phi \sim \sum_{p = 0}^\infty   \sum_{\substack{\ell + j = p + 1\\ \ell \geq 0,  j \geq 1}}  \sum_{i = 0}^\ell \sum_{\substack{ k_1 + \cdots + k_i = \ell,\\ {k_u \geq 1}}} (-1)^i S_j C_{k_1 + 1} \cdots C_{k_i + 1} (s^+)^p,
		\end{align}
		\red{for some coefficients $S_j, C_k \in \R$ depending on the jet of $\theta$ at $s = 0$.} This follows from writing
		\begin{align*}
			\int_0^{s^+} \sin \theta(t) dt = \sum_{j = 1}^\infty S_j (s^+)^j, \qquad \int_0^{s^+} \cos \theta(t) dt = \sum_{k = 1}^\infty C_k (s^+)^k, 
		\end{align*}
		and performing the usual trick
		\begin{align}
			\label{eq:sincos quotient}
			\frac{	\int_0^{s^+} \sin \theta(t) dt }{	\int_0^{s^+} \cos \theta(t) dt } \sim \frac{1}{C_1 (s^+)} \sum_{j = 1}^\infty S_j (s^+)^j \left(\frac{1}{1 + \left(\sum_{k = 1}^\infty \wt{C}_{k+1} (s^+)^{k}\right)} \right),
		\end{align}
		where \red{$\wt{C}_{k}  = C_1^{-1} C_k$.} The \red{summand in parentheses in the right-hand side of} \eqref{eq:sincos quotient} can be expanded in a geometric series. \red{It is clear that $C_1 = 1$, so $C_k = \wt{C}_k$ for all $k$.} To find the coefficients $S_j, C_k$, we first Taylor expand $\sin \theta$, $\cos \theta$ in terms of $\theta$ and then expand $\theta$ in terms of $s^+$ at $s^+ = 0$ with coefficients depending on the curvature jet. For the cosine term, we have
		\begin{align*}
			C_k &= \frac{1}{k!} \left(\frac{d}{d s^+}\right)^k \int_0^{s^+} \sum_{q = 0}^\infty \frac{(-1)^q}{(2q)!} \theta^{2q}(t) dt \bigg|_{s^+ = 0}\\
			&= \frac{1}{k!} \sum_{q = 0}^{\lfloor(k-1)/2\rfloor} \sum_{\substack{\ell_1 + \cdots + \ell_{2q} = k - 1 \\ \ell_u \geq 0} } \frac{(-1)^q}{(2q)!} \binom{k-1}{\ell_1, \cdots, \ell_{2q}} \prod_{i = 1}^{2q} \theta_{\ell_i},
		\end{align*}
		and for the sine term,
		\begin{align*}
			S_j = \frac{1}{j!} \sum_{r = 0}^{\lfloor j/2 - 1\rfloor} \sum_{\substack{\ell_1 + \cdots + \ell_{2r + 1} = j - 1 \\ \ell_u \geq 0} } \frac{(-1)^r}{(2r + 1)!} \binom{j-1}{\ell_1, \cdots, \ell_{2r + 1}} \prod_{i = 1}^{2r + 1} \theta_{\ell_i}.
		\end{align*}
		Notice that only the terms where each $\ell_i > 0$ contribute since $\theta(0) = 0$. Moreover, only terms with $0 \leq q \leq \lfloor \frac{k-1}{2}\rfloor $ and $0 \leq r \leq \lfloor \frac{j}{2} -1 \rfloor$ fulfill the criteria of the inner sums when $\ell_i \geq 1$. Hence, the maximal curvature derivative comes from the terms $q = 1$ (for $C_k)$ and $r = 0$ (for $S_j$), yielding
		\begin{align*}
			C_1 &= 1,\\
			C_2 &= 0,\\
			C_3 &= - \frac{ \ka^2}{6},\\
			C_k &=  \frac{(1 - k) }{k!}\theta_{k - 2} \theta_1 + \lot, \qquad k \geq 4\\
			S_1 & = 0,\\
			S_j &=  \frac{1}{j!} \theta_{j - 1} + \lot,  \qquad j \geq 2,
		\end{align*}
		\red{where by $\lot$, we mean lower order terms depending on fewer derivatives of $\theta$, or equivalently, of $\ka$.} To address the differential degree in $\kappa$, observe that since $\theta_1 = \kappa$, the terms involving undifferentiated factors of $\ka$ don't actually contribute to the differential degree of a polynomial in the curvature jet. In particular, for $q  \geq 2$ in the case of $C_k$ and $r \geq 1$ in the case of $S_j$, since only the terms $\theta_{\ell_i}$ with $\ell_i \geq 1$ are nonzero, the resulting polynomials have lower differential degree in the derivatives of $\kappa$ rather than of $\theta$. In either case, the differential degree of $\theta_{\ell_i}$ in $\kappa$ is $\ell_i - 1$, which implies that the $q^{\text{th}}$ term in $C_k$ has differential degree $k - 1 - 2q$ while that of the $r^{\text{th}}$ term in $S_j$ is $j - 1 - 2r - 1$. More concretely, we have
		\begin{align}\label{RMCS}
			\begin{split}
				C_k &= \frac{(1 -  k)}{k!} \kappa \kappa_{k - 3} + \RR_{k}^C,\\
				S_j &= \frac{\kappa_{j - 2}}{j!} + \RR_{j}^S,
			\end{split}
		\end{align}
		with $\RR_{k}^C$ a polynomial in $\kappa, \cdots, \kappa_{k-5}$ of differential degree at most $k - 5$ and $\RR_j^S$ a polynomial in $\kappa, \cdots, \kappa_{j-4}$ of differential degree at most $j - 4$. 
		\\
		\\
		To isolate the maximal derivatives appearing in $c_p[\theta]$, note that each term in the sum \eqref{tanphi} has differential degree $j - 2 + k_1 - 2 + \cdots + k_i -2 = j - 2 + \ell  - 2i = p - (2i + 1)$ as a polynomial in the jet of $\kappa$. Hence we should choose $i$ minimal to obtain terms with maximal differential degree. Putting \eqref{RMCS} into the expansion of $\tan \phi$, we have
		\begin{align*}
			c_p[\theta] & = \sum_{\substack{\ell + j = p + 1\\ \ell \geq 0,  j \geq 1}}  \sum_{i = 0}^\ell \sum_{\substack{ k_1 + \cdots + k_i = \ell,\\ {k_u \geq 1}}} (-1)^i S_j C_{k_1 + 1} \cdots C_{k_i + 1}\\
			&= \frac{\kappa_{p-1}}{(p+1)!} + \RR_p^c,
		\end{align*}
		where we have used only the term $i = 0, \ell = 0, j = p+1$. The remainder has differential degree $p-3$, completing the lemma.
	\end{proof}
	
	On the other hand, we can express $\tan \phi$ as
	\begin{align*}
		\frac{\sqrt{\lambda(2 - \lambda)}}{1 - \lambda} = \sum_{N = 1}^\infty d_N \lambda^{\frac{N}{2}},
	\end{align*}
	where the coefficients $d_N$ are purely combinatorial. Expanding \red{$s^+ \sim 0 + \sum_{M= 1}^\infty A_M(s) \lambda^{\frac{M}{2}}$ from \eqref{eq: splus in terms of s and lambda} in the right-hand side of} \eqref{ypxp} and matching \red{the coefficients of $\lambda^{\frac{N}{2}}$} yields the equation
	\begin{align}\label{eq:dMcpA}
		d_M = \sum_{p = 1}^M c_p[\theta] \sum_{\substack{j_1 + \cdots + j_p = M,\\ {j_k \geq 1}}} \prod_{k = 1}^p A_{j_k}(s).
	\end{align}
	Let us compute the first two terms explicitly to corroborate the formulas which have \red{already} been computed \red{in} \cite{MM} \red{and} \cite{Sorr15}. Using that $S_1 = C_2 = 0$,
	\begin{align*}
		S_1 &= 0, \qquad S_2 = \kappa / 2,\\
		S_3 &= \kappa_1 / 6, \qquad S_4 = \frac{\ka_2 - \ka^3}{24},\\
		C_1 &= 1, \qquad C_2 = 0,\\
		C_3 &= - \frac{\ka^2}{6},
	\end{align*}
	\red{and} we get
	\begin{align*}
		\tan \phi &= \frac{\sum_{j = 2}^\infty S_j (s^+)^j}{\sum_{k = 1}^\infty C_k(s^+)^k}\\
		&= \frac{S_2}{C_1^2} (s^+) + \frac{S_3 C_1 - S_2 C_2}{C_1^3} (s^+)^2 + \frac{- C_3 S_2 + S_4}{C_1^4} (s^+)^3 + O((s^+)^4)\\
		& = \frac{\kappa}{2} (s^+) + \frac{\kappa_1}{6} (s^+)^2 + \left(\frac{\ka^3}{12} + \frac{\ka_2 - \ka^3}{24} \right) (s^+)^3+ O((s^+)^4).
	\end{align*}
	Hence,
	\begin{align}\label{cps}
		c_1 = \frac{\kappa}{2}, \qquad c_2 = \frac{\kappa_1}{6}, \qquad
		c_3 = \frac{\ka_2 + \ka^3}{24},
	\end{align}
	which is in line with Proposition \ref{cp}. One also checks that $d_1 = \sqrt{2}, d_2 = 0, d_3 = \frac{3}{2\sqrt{2}}$, which gives
	\begin{align}\label{Aps}
		\begin{split}
		\sqrt{2} &= A_1 c_1 \implies A_1 = 2^\frac{3}{2} \ka^{-1}, \\
		0 &= A_2 c_1 + c_2 A_1^2 \implies A_2 = - \frac{8}{3} \kappa^{-3} \kappa_1,\\
		\frac{3}{2 \sqrt{2}} & = c_1 A_3 + c_2 (A_1 A_2 + A_2 A_1) + c_3 A_1^3,  \\
		& \implies A_3 = \frac{3}{\sqrt{2}} \ka^{-1}  + \frac{32 \sqrt{2}}{9} \ka^{-5} \ka_1^2 - \frac{4 \sqrt{2}}{3}\ka_2 \ka^{-4} - \frac{4\sqrt{2}}{3} \ka^{-1}.
	\end{split}
	\end{align}
	
	One can easily check that these formulas are in agreement with \eqref{smallzeta} and \eqref{smallA}. In general, we can recover $A_M$ in terms of curvature from the coefficients $c_p$. The following proposition characterizes the algebraic structure of $A_M$ and generalizes the computations above.

	\begin{prop}\label{AM}
		For $M \geq 3$, the coefficients $A_M$ are \red{polynomials in the curvature jet which have the form}
		\begin{align*}
			\red{A_M} =& \red{A_M^{\text{lin}} + A_M^{(2)} + \RR_M^A},
		\end{align*}
		where
		\begin{align*}
			A_M^{\text{lin}} =& - 2^\frac{3M + 2}{2} \frac{ \kappa_{M - 1}}{(M+1)!} \kappa^{-M-1},
			\\
			A_M^{(2)} = &  2^\frac{3M + 4}{2} \ka^{-M-2}  \sum_{p = 2}^{M-1} p \frac{ \kappa_{p - 1}\ka_{M - p} }{(p+1)! (M - p + 2)!},
		\end{align*}
		and $\RR_M^A$ is an \red{$(M-1,3)$-harmless remainder in the sense of Definition \ref{def:harmless}, i.e.,}
		\begin{itemize}
			\item The differential degree of $\RR_M^A$ \red{is at most} $M-1$.
			\item \red{All monomials in $\RR_M^A$ which have} differential degree equal to $M- 1$ contain at least three factors of $\kappa_j$ \red{with} $j \geq 1$.
		\end{itemize}
	\end{prop}

	\begin{proof}

		\red{From \eqref{Aps}, we see that the proposition is true when $M = 3$, so we proceed by induction. Assume that $A_N$ has the form stated in the proposition for $N = 3,4, \cdots M-1$. We begin by decomposing the right hand side of \eqref{eq:dMcpA}. Separating out the terms with maximal indices of $A_M$ and $c_M$, we have
			
			\begin{align*}
				d_M &= c_M A_1^M + c_1 A_M + \sum_{p = 2}^{M-1} c_p[\theta] \sum_{\substack{j_1 + \cdots + j_p = M,\\ {j_k \geq 1}}} \prod_{k = 1}^p A_{j_k},
			\end{align*}
			
			Solving for $A_M$ gives
			
			\begin{align}\label{eq:AMfirstfewcp}
				A_M = c_1^{-1} d_M - c_1^{-1} c_M A_1^M - c_1^{-1}\sum_{p=2}^{M - 1} c_p \sum_{\substack{j_1 + \cdots + j_p = M,\\ {j_k \geq 1}}} \prod_{k = 1}^p A_{j_k}.
			\end{align}
			
			Now, the same simplification can be applied to each $A_{j_k}$ in the product on the right hand side of \eqref{eq:AMfirstfewcp}; expanding $A_{j_k}$ in terms of \eqref{eq:dMcpA}, we separate out terms with exactly one index $j_k\geq 2$ from terms with at least two such indices. There are $p$ terms with a single index $j_k$ greater than or equal to two and the fact that all other $j_\ell = 1$ implies that $j_k = M - p + 1$. We have
			
			\begin{align}\label{eq:AMmainterms}
				A_M = - c_1^{-1} c_M A_1^M - c_1^{-1} \sum_{p=2}^{M-1} c_p p A_1^{p-1} A_{M - p + 1} + \RR_{M,1}^A,
			\end{align}
			where
			\begin{align*}
				\RR_{M, 1}^A = c_1^{-1} d_M -  c_1^{-1} \sum_{p=2}^{M - 1} c_p
				\sum_{
				\substack{j_{1} + \cdots + j_p = M,
				\\
				{\exists k_1 \neq k_2 : j_{k_1}, j_{k_2} \geq 2}}} \prod_{k = 1}^p A_{j_k},
			\end{align*}
			with the product containing only terms having at least two indices $j_{k_1}, j_{k_2} \geq 2$. Recall from Lemma \ref{cp} that $c_p = \frac{\kappa_{p-1}}{(p+1)!} + \RR_p^c$, where $\RR_p^c$ has differential degree at most $p-3$ as a polynomial in the jet of $\kappa$. In view of our induction hypothesis on $A_N$ for $3 \leq N \leq M-1$ together with the explicit formulas for $A_1$ and $A_2$ in \eqref{Aps}, we see that $\RR_{M,1}^A$ contains terms of differential degree at most $M - 1$ and all terms with differential degree equal to $M-1$ contain at least three $\ka$-derivatives having order at least one, coming from $A_{j_{k_1}}, A_{j_{k_2}}$ and $c_p$ with $2 \leq j_{k_1}, j_{k_2}, p \leq M-1$. We now plug our formulas for the leading order parts of $c_p$ (from Lemma \ref{cp}) and $A_N$ (from our induction hypothesis) into the main two terms in \eqref{eq:AMmainterms}.}
			\\
			\\
		\red{Using $c_1 = \ka/2$, $A_1 = 2^{3/2} \ka^{-1}$ and $c_M = \ka_{M-1}/(M+1)! + \RR_M^c$, we see that the linear term (in the sense of Definition \ref{def:linearquadratic}) is
			\begin{align*}
				-c_1^{-1} c_M A_1^M = -2^{(3M+2)/2} \frac{\ka_{M-1}}{(M+1)!} \ka^{-M - 1}.
			\end{align*}
			For the second term, we use the induction hypothesis, together with the explicit formula for $A_2$, to write
			\begin{align}\label{eq:inductiveAN}
				A_N^{\text{lin}} = -c_1^{-1} c_N A_1^N = -2^{(3N+2)/2} \frac{\ka_{N-1}}{(N+1)!} \ka^{-N - 1}, \qquad 2 \leq N \leq M-1.
			\end{align}
			It is clear that the second and higher order terms in \eqref{eq:inductiveAN} contribute only to a remainder with the properties stated in the proposition. Indeed, every such term contains at least two factors $\ka_j$ with $j \geq 1$. Multiplication by $c_p$, whose leading term contains $\ka_{p-1}$ therefore produces either a term of differential degree strictly less than $M-1$ or a term of differential degree equal to $M-1$ but containing at least three differentiated curvature factors. We then have
			\begin{align*}
				-c_1^{-1} \sum_{p= 2}^{M-1} p c_p A_1^{p-1} A_{M-p + 1}^{\text{lin}} = 2^{(3M+4)/2} \ka^{-M - 2} \sum_{p = 2}^{M-1} p \frac{\ka_{p-1} \ka_{M-p}}{(p+1)! (M-p+2)!},
			\end{align*}
			which provides the structure needed for $A_M^{(2)}$.
		}
	\end{proof}

	\red{The structure of the coefficients $A_M$ will be important when integrated in later sections. In particular, the remainder $\RR_M^A$ is amenable to the ``differentiation by parts'' algorithm described in Section \ref{subsec: cohomological considerations and curvature polynomials} (Corollary \ref{cor:harmlessremainders} in particular).}

\subsection{Computing $A_M$ algebraically}\label{sec: computing AM Algebraically}
Our next goal is to determine the algebraic relationship between the coefficients $A_M$ and $\zeta_i$. To do this, we will analyze the structure of terms appearing in \eqref{sp}. \red{For use in the second order analysis in Section \ref{Integral invariants}, we introduce the following decomposition of $L = X_\zeta^2$:}

\begin{def1}\label{L}
	\red{Let $\zeta$ be an interpolating Hamiltonian for the billiard map as in Theorem \ref{thm: interpolating hamiltonian}. We} define the second order differential operator
	\begin{align*}
		L : = X_\zeta^2 =  &
		\bigg(\frac{\d \zeta}{\d s} \frac{\d^2 \zeta}{\d s \d \lambda} \frac{\d}{\d \lambda}
		+ \left(\frac{\d \zeta}{\d s} \right)^2 \frac{\d^2}{\d \lambda^2}
		- \frac{\d \zeta}{\d s} \frac{\d^2 \zeta}{\d \lambda^2} \frac{\d}{\d s} 
		- \frac{\d \zeta}{\d s} \frac{\d \zeta}{ \d \lambda} \frac{\d^2}{\d s \d\lambda}\\*
		&
		- \frac{\d \zeta}{\d \lambda} \frac{\d^2 \zeta}{\d s^2} \frac{\d}{\d \lambda}
		- \frac{\d \zeta}{\d \lambda} \frac{\d \zeta}{\d s} \frac{\d^2}{\d s \d \lambda}
		+ \frac{\d \zeta}{\d \lambda} \frac{\d^2 \zeta}{\d s \d \lambda} \frac{\d}{\d s}
		+ \left(\frac{\d \zeta}{\d \lambda} \right)^2 \frac{\d^2 }{\d s^2}
		\bigg),
	\end{align*}
	\red{whose (non-constant) coefficients are polynomials in the jet of $\zeta(s,\lambda)$. We will} denote the individual terms above by $L_1, L_2, \cdots, L_8$. \red{In view of the expansion \eqref{eq:taylor exp of zeta}, each of the differential operators} $L_i$ \red{themselves admit expansions in powers of $\lambda$, whose} coefficients \red{are polynomials in the Taylor coefficients} \red{of $\zeta$}. We denote the corresponding operators by $L_{i,q,r}$.
\end{def1}

	For example, \red{we will write}
	\begin{align*}
		L_{1,q,r} = \dot{\zeta_q} \lambda^q r \dot{\zeta_r} \lambda^{r-1} \frac{\d}{\d \lambda},
	\end{align*}
	\red{where $\dot \zeta_i = \frac{d \zeta_i}{d s}$ for $i \geq 1$.}
	
\begin{def1}\label{Zk}
	\red{For $K \in \Z_{\geq 0}$,} denote by $\z_{K} := \frac{(-1)^K }{K!} X_\zeta^{K} s$, so that $(2k+1)! \z_{2k + 1} = L^k \z_1$ and $L \z_K = (K+1) (K+2) \z_{K + 2}$. Write $\z_{K,i}$ for the coefficient of $\lambda^{i/2}$ in $\z_K$:
	\begin{align*}
		\z_K = \sum_{i = 0}^\infty \z_{K,i} \lambda^{i/2}.
	\end{align*}
\end{def1}

\red{Since $X_\zeta$ has an expansion in only integer powers of $\lambda$, it is clear that for $K$ even, $\zeta^{K/2} X_\zeta^K s$ also has an expansion in integer powers of $\lambda$. For $K$ odd, $\zeta^{K/2} X_\zeta^K s$ has an expansion in only \textit{half} (but not whole) integer powers of $\lambda$. In both cases, we will expand $\z_K$ itself in half integer powers of $\lambda$ in order to reduce the notational burden in what follows.} From the definitions above and the fact that $\zeta^{K/2}$ commutes with $X_\zeta$, it follows that
\begin{align*}
	\sum_{M = 1}^\infty A_M \lambda^{M/2} &\sim \sum_{K = 1}^\infty \zeta^{K/2} \sum_{i = 0}^\infty \z_{K,i} \lambda^{i/2}.
\end{align*}

To isolate the effect of some $\z_K$ on a coefficient $A_M$, note that only when the parity of $M$ matches that of $K$ does $\z_K$ contribute, \red{since the exponent of $\zeta^{K/2}$ determines whether $\lambda$ appears with an integer power or an integer plus one half power}.
\\
\\
Each operator in $L^k$ corresponds to $8^k$ compositions of $k$ simpler operators in an obvious way. It will also be important to specify the order in which these are composed, \red{so we introduce the following notation.} Let $\bm{\sigma}: \Z_k \to \Z_{8}$ be \red{a $k$-letter word in the alphabet $\{1,2,3,4,5,6,7,8\}$}. We can associate to $\bm{\sigma} = (\sigma_1, \cdots, \sigma_k)$ the composite operator
	\begin{align}\label{eq:wordcompositions}
		L_{\bm{\sigma}} := L_{{\sigma}_k} \circ L_{{\sigma}_{k-1}} \cdots \circ L_{{\sigma}_1},
	\end{align}
	so that
		\begin{align*}
			L^k = \sum_{\bm{\sigma}: \Z_k \to \Z_8} L_{\bm{\sigma}}.
		\end{align*}
In order to keep track of dependence on powers of $\lambda$, we introduce the following \red{coefficient extraction operator}.

	\begin{def1}\label{notationdef}
		 \red{Let} $\mathcal{Y} \red{\sim \sum_{M = 0}^\infty \Y_M} \lambda^{\frac{M}{2}}$ be any \red{formal} asymptotic expansion in powers of $\lambda^{\frac{1}{2}}$ and define $\Lambda_M$ \red{to be the linear operator which} extracts the coefficient of $\lambda^{M/2}$:
		 \begin{align*}
		 	\Lambda_M \left[ \sum_{i = 0}^\infty \mathcal{Y}_i \lambda^{i/2} \right] = \mathcal{Y}_M.
		 \end{align*}
		 \red{We call $\Lambda_M$ the \textbf{$M$-extraction operator.}}
	\end{def1}
	
	 It follows that if $M = 2m - 1$ is odd, then setting $K = 2k + 1$ gives
	\begin{align*}
		A_M = \sum_{k = 0}^{\frac{M - 1}{2}} \Lambda_M \left[ \zeta^{\frac{2k+1}{2}} \z_{2k + 1} \right].
	\end{align*}

	We will later write $M = M_1 + M_2$ to compute the contributions of $\zeta^{K/2}$ and $\z_K$ to $A_M$ separately. Moving forward, we will almost exclusively deal with the case when $M$ is odd, for reasons to be made clear in Proposition \ref{ZA}. \red{Recall our convention that} $M = 2m - 1$ (\red{equivalently,} $\frac{M+1}{2} = m$).

	\begin{lemm}\label{maxind}
		\red{Let $K \in \N$ and consider the expansion of $\zeta^{\frac{K}{2}}$ in powers of $\lambda^{\frac{1}{2}}$. We have the following structure of $\Lambda_{M_1} [\zeta^{\frac{K}{2}}]$:}
		\begin{enumerate}[(a)]
			\item \red{If $K$ and $M_1$ are odd and} $1 \leq K < M_1$, \red{then} the $M_1$ coefficient of $\zeta^{K/2}$ is of the form
			\begin{align*}
				\Lambda_{M_1} \left[ \zeta^{\frac{K}{2}} \right] =& \frac{K}{2} \zeta_1^{\frac{K -2}{2}} \zeta_{\frac{M_1 - K +2}{2}} + \left(\frac{K^2 - 2K}{8}\right) \zeta_1^{\frac{K- 4}{2}}\\
				&\times \sum_{\substack{i_1 + i_2 = \frac{M_1 - K}{2}\\ i_\ell \geq 1}} \zeta_{i_1 + 1} \zeta_{i_2 + 1} + \upsilon_{M_1, K}[\zeta],
			\end{align*}
			where
			\begin{align*}
				\upsilon_{M_1, K}[\zeta] = \zeta_1^{\frac{K}{2}} \left( \sum_{j = 3}^{\frac{M_1 - K}{2}}  \binom{\frac{K}{2}}{j} \zeta_1^{-j} \sum_{\substack{i_1 + \cdots + i_j = \frac{M_1 - K}{2} \\ i_\ell \geq 1}} {\zeta}_{i_1 + 1} \cdots {\zeta}_{i_j + 1} \right)
			\end{align*}
			is a polynomial in $\zeta_1^{\red{\pm} \frac{1}{2}}, \zeta_2, \cdots, \zeta_{\frac{M_1-1}{2}}$ with the property that each constituent monomial contains at least three factors of $\zeta_p$, $p \geq 2$ \red{and has $\zeta$-weight at most $M_1 - K$ (cf. Definition \ref{def:zeta weight})}. In particular, the maximal index of $\zeta_i$ appearing in the $M_1$-coefficient of the $\zeta^{\Khalf}$ expansion is $i = \frac{M_1 + 1}{2}$ and appears only when $K = 1$.
			
			\item If \red{both $M_1$ and} $K = 2k$ are even with $k \geq \red{2}$, then $\Lambda_{M_1}\left[\zeta^{\frac{K}{2}}\right]$ is of the form
			\begin{align*}
				\sum_{\substack{i_1 + \cdots + i_k = \frac{M_1}{2} \\ i_\ell \geq 1}} \zeta_{i_1} \cdots \zeta_{i_k},
			\end{align*}
			\red{each term of which has $\zeta$-weight at most $M_1 - K$}. In particular, for $K \red{\geq 4}$ even, the maximal index $i$ of $\zeta_i$ appearing in the \red{$\lambda^{\frac{M_1}{2}}$-coefficient of} the expansion of $\zeta^{\frac{K}{2}}$ is at most $\frac{M_1}{2} - 1$. 
		\end{enumerate}
	\end{lemm}

	\begin{proof}
		We expand $\zeta^{\Khalf}$ via the generalized binomial theorem and see that
		\begin{align*}
			\zeta^{K/2}
			&\sim
			\zeta_1^{K/2} \lambda^{K/2}  \sum_{j =0}^\infty \binom{\frac{K}{2}}{j} \left(\sum_{i = 1}^\infty \wt{\zeta}_{i + 1} \lambda^i \right)^j\\ 
			&\sim 
			\zeta_1^{K/2} \lambda^{K/2} \left(  \sum_{i = 0}^\infty \sum_{j = 0}^i  \binom{\frac{K}{2}}{j} \sum_{\substack{i_1 + \cdots + i_j = i\\ i_\ell \geq 1}} \wt{\zeta}_{i_1 + 1} \cdots \wt{\zeta}_{i_j + 1} \lambda^i \right),
		\end{align*}
		with $\wt{\zeta_{i}} = \zeta_i / \zeta_1$ and the term $i = j = 0$ corresponding to one. The term containing the maximal $\zeta_i$ depends on the parity of $M_1$.
		\\
		\\
		The terms in the statement of the lemma for $M_1$ and $K$ odd come from the indices $i = \frac{M_1 - K}{2}$, $j = 1$, $j = 2$ and the corresponding binomial coefficients, while the remainder terms arise when $j \geq 3$. In either case, the left-hand side will have the \red{power} $\lambda^{\frac{K}{2}} \lambda^i = \lambda^\frac{M_1}{2}$. It is clear that all terms in the above sum have $\zeta$-weight at most $M_1 - K$.
		\\
		\\
		To maximize the index $i$, we take $K$ to be minimal and $j = 1$. When $M_1$ is odd, we choose $K = 1$, $i = \frac{M_1-1}{2}$, and $j = 1$,  in which case  $\zeta^{1/2}$ generates the term
		$$
		\zeta_1^{\half} \lambda^{\half} \binom{\frac{1}{2}}{1}  \wt{\zeta}_{\frac{M_1-1}{2} + 1} \lambda^{\frac{M_1-1}{2}} = \frac{1}{2} \zeta_1^{-\half} \zeta_{\frac{M_1+1}{2}} \lambda^{\Monehalf},
		$$
		together with other polynomial terms in $\zeta_1^{\half}, \zeta_2, \cdots, \zeta_{\frac{M_1 - 1}{2}}$.
		\\
		\\
		If $M_1$ is even, then \red{$\Lambda_{M_1} [\zeta^{K/2}]$ is nonzero only if} $K$ is even so that there are no fractional powers of $\lambda$. With $K = 2 k, k \in \Z_{\geq 2}$, we are just expanding an integer power of $\zeta$:
		\begin{align*}
			\zeta^{\Khalf} = \zeta^k = \sum_{i = k}^\infty \sum_{\substack{i_1 + \cdots + i_k = i\\ i_\ell \geq 1}} \zeta_{i_1} \cdots \zeta_{i_k} \lambda^i,
		\end{align*}
		and therefore, the coefficient of $\lambda^{\Monehalf}$ is just
		\begin{align*}
			\sum_{\substack{i_1 + \cdots + i_k = \frac{M_1}{2} \\ i_\ell \geq 1}} \zeta_{i_1} \cdots \zeta_{i_k}.
		\end{align*}
		\red{Since $k \geq 2$, the condition $i_\ell \geq 1$ forces the maximal index in the sum to be less than or equal to $\frac{M_1}{2} - 1$.}
	\end{proof}

\noindent We will now derive a similar structure for the terms $\zeta^{\Khalf} \z_K$ appearing in \eqref{sp}. \red{To do so, we will need a generalization of the notion of differential degree which allows for polynomials in the jet of several functions simultaneously.}

\red{
	\begin{def1}
		\label{def: multi-fold diff degree}
		Let $u = (u_1, \cdots, u_m)$ be a collection of smooth functions on $\R/\ell \Z$ and denote by
		\begin{align*}
			\mathcal{Q} = c \prod_{i = 1}^m \prod_{j =0}^{r} \left(\frac{d^j u_i}{d s^j} \right)^{p_{ij}}
		\end{align*}
		a monomial in the $r$-jet of $u_1, u_2, u_3, \cdots, u_m$ with $c \in \R$, $p_{i0} \in \Q$ and $p_{ij} \in \N$ when $j \geq 1$; contrary to our previous conventions in Definition \ref{diffdeg}, the subscript $i$ here does not indicate differentiation, but rather, a choice of one of the $m$ functions $u_i$. We define the \textbf{$\bm{u}$-differential degree} of $\q$ to be
		\begin{align*}
			\text{deg}_{\d, u} (\q) = \sum_{i = 1}^m \sum_{j = 1}^r j p_{i,j},
		\end{align*}
		which counts the total number of derivatives across all $m$ functions. For a polynomial in the jets of $\{u_1, \cdots, u_n\}$ with coefficients in $\R[u_1^{p_{i0}}, \cdots, u_m^{p_{m0}}]$, we define its $u$-differential degree to be the maximum $u$-differential degree of all constituent monomials.
	\end{def1}
	
	\begin{rema}
		When $m = 1$, this coincides with the notion of differential degree in Definition \ref{diffdeg}. We introduce this notion of multi-variable differential degree primarily to deal with polynomials in the jet of $\zeta$, in which case, by an abuse of notation, we will write $\deg_{\d, \zeta}(\q)$ to denote the $(\zeta_1, \cdots, \zeta_m)$-differential degree without necessarily specifying $m$.
	\end{rema}
}

	\begin{prop} \label{ZA}
		The data $\left\{A_1, \cdots, A_{2m - 1}\right\}$ are equivalent to $\left\{\zeta_1, \cdots, \zeta_m\right\}$ for all $m$, in the sense that both sets of coefficients are given in terms of algebraic functions of a finite number of derivatives of the other. Moreover, \red{for $m \geq 2$, $\zeta_m$ does not appear in the coefficients $A_1, \cdots, A_{2m-2}$; it} first appears in $A_{2m - 1}$ \red{and has} the form
		\begin{align*}
			A_{2m-1} = \frac{ 2m+1}{2} \zeta_1^{1/2} \zeta_m + \Upsilon_{2m - 1},
		\end{align*}
		where $\Upsilon_{2m - 1}$ is a polynomial in the $(2m - 2)$-jets of $\zeta_1^{\pm \frac{1}{2}}, \zeta_2, \cdots, \zeta_{m -1}$ \red{(i.e., containing $s$-derivatives of $\zeta_1^{\pm 1/2}, \cdots, \zeta_{m-1}$ having order \red{at most} $2m - 2$)}. \red{Furthermore, denoting by $w_\zeta(\nu)$ the $\zeta$-weight of a term $\nu$ in $\Upsilon_{2m - 1}$ and by $\deg_{\d, \zeta} (\nu)$ its multi-variable differential degree, we have
		\begin{align*}
			w_\zeta(\nu) + \deg_{\d, \zeta} (\nu) \leq 2m - 2.
		\end{align*}
		}
	\end{prop}

	\begin{proof}
		\red{Recall that $\zeta = O(\lambda)$, so the first few terms in the expansion \eqref{sp} read}
		\begin{align}
			\label{first few}
			\begin{split}
			\red{s^+  - s \sim}  &\red{\sum_{j= 1}^\infty (-1)^j \zeta^{j/2} \frac{X_\zeta^j}{j!} s}
			\\
			=
			&
			\red{- \zeta^{\frac{1}{2}} X_\zeta s + \frac{1}{2} \zeta X_\zeta^2 s - \frac{1}{3!} \zeta^{\frac{3}{2}} X_\zeta^3 s + O(\lambda^2)}
			\\
			=&\zeta^{1/2} \frac{\d \zeta}{\d \lambda} + \frac{1}{2} \left(\zeta \frac{\d \zeta}{\d \lambda} \frac{\d^2 \zeta}{\d \lambda \d s} - \zeta \frac{\d \zeta}{\d s} \frac{\d^2 \zeta}{\d \lambda^2}\right)
				\\
				&+ \frac{1}{3!}  \zeta^{3/2} \bigg( \frac{\d \zeta}{\d \lambda}  \left(\left(\frac{\d^2 \zeta}{\d\lambda \d s} \right)^2 + \frac{\d \zeta}{\d \lambda}  \frac{\d^3 \zeta}{\d \lambda \d s^2}  - \frac{\d^2 \zeta}{\d s^2} \frac{\d^2 \zeta}{\d \lambda^2} - \frac{\d \zeta}{\d s} \frac{\d^3 \zeta}{\d \lambda^2 \d s}        \right)
				\\
				&  \qquad \qquad - \frac{\d \zeta}{\d s} \left(\frac{\d \zeta}{\d \lambda} \frac{\d^3 \zeta}{\d \lambda^2 \d s} - \frac{\d \zeta}{\d s} \frac{\d^3 \zeta}{\d \lambda^3}\right)\bigg) + O(\lambda^\red{2}).
			\end{split}
		\end{align}
		\red{We now expand the full power series  in the first line of \eqref{sp} in powers of $\lambda^{1/2}$, with coefficients given in terms of $(s, \lambda)$-derivatives of $\zeta$ and $\zeta^{\half}$.} The general form of the expansion \eqref{sp} consists of sums of powers of $\lambda$, \red{which} come from products of terms of the form
		\begin{align}\label{pq}
			\frac{\d^{p+q} \zeta}{\d \lambda^p \d s^q} \sim \sum_{i = p}^\infty \frac{i!}{(i-p)!} \frac{d^q \zeta_i}{d s^q} \lambda^{i - p}.
		\end{align}
		\red{In each such term, we will look} for the maximal \red{index of} $\zeta_i$ in the coefficient of $\lambda^{M/2}$. We will assume the parity of $M$ \red{matches that} of $K$, \red{so that} \red{$\Lambda_M[\zeta^{K/2} X_\zeta^K s]$ is not trivially equal to zero.}
		\\
		\\
		\textbf{Case 1.} \red{Let $K = 1$ and suppose that $M$ is odd. We will find the $\lambda^{M/2}$-coefficient in $\zeta^{1/2} \z_1 = -\zeta^{1/2} X_\zeta s$ (which is the first term on the third} line of \eqref{first few}). We have
		\begin{align*}
			\zeta^{1/2} \frac{\d \zeta}{\d \lambda} &=  \left( \zeta_1^{1/2} \lambda^{1/2} + \frac{1}{2} \zeta_1^{-1/2} \zeta_2 \lambda^{3/2} + \cdots \right) \bigg(\zeta_1 + 2 \lambda \zeta_2 + \cdots \bigg)\\
			&=  \zeta_1^{1/2} \lambda^{1/2} \left(  \sum_{i = 0}^\infty \sum_{j = 0}^i  \binom{\frac{1}{2}}{j} \sum_{\substack{i_1 + \cdots + i_j = i\\ i_\ell \geq 1}} \wt{\zeta}_{i_1 + 1} \cdots \wt{\zeta}_{i_j + 1} \lambda^i \right) \left(\sum_{\ell = 1}^\infty \ell \zeta_\ell \lambda^{\ell-1}\right),
		\end{align*}
		which contributes to $\Lambda_M[- \zeta^{1/2} X_\zeta s]$ the maximal terms
		\begin{align*}
			\zeta_1^{1/2} \left(\frac{M + 1}{2}\right) \zeta_{\frac{M + 1}{2}} , \qquad i = j = 0, \ell = \frac{M+1}{2},
		\end{align*}
		and
		\begin{align*}
			\frac{1}{2} \zeta_1^{-1/2} \zeta_{\frac{M-1}{2} + 1} \zeta_1, \qquad i = \frac{M-1}{2}, j = \ell = 1.
		\end{align*}
		The maximal indices come from minimizing the power of $\lambda$ in one of the sums so that the other can be taken to have maximal power and hence maximal index. We used Lemma \ref{maxind} for the factor $\zeta^{1/2}$. These combine to give a contribution of
		\begin{align}\label{j0}
			\frac{M + 2}{2} \zeta_1^{1/2} \zeta_{\frac{M + 1}{2}},
		\end{align}
		which for $M = 3$, corroborates the principal term in \eqref{smallA}.
	\\
	\\
	\textbf{Case 2.} We now claim that for $K > 1$, the coefficient of $\lambda^{M/2}$ in the expansion of $\zeta^{K/2} X_\zeta^K s$ contains only terms depending on $\zeta_1, \cdots, $ $\zeta_{\frac{M - 1}{2}}$ or $\zeta_1, \cdots, \zeta_{\frac{M}{2}}$ depending on the parity of $M$, together with their $s$ derivatives. \red{We first expand $X_\zeta$ in powers of $\lambda$:
		\begin{align*}
			X_\zeta \sim \sum_{i = 1}^\infty \lambda^i \dot{\zeta_i} \frac{\d}{\d \lambda} - i \lambda^{i-1} \zeta_i \frac{\d}{\d s}.
		\end{align*}
		Define the $\lambda$-\textit{order} $\mu$ of a formal expansion $\Y \sim \sum_{j=0}^\infty \Y_j\lambda^{j/2}$ to be the minimal $j_0$ for which $\Y_{j_0} \neq 0$. Note that the action of $X_\zeta$ on such an expansion does not decrease its $\lambda$-order:
		\begin{align*}
			X_\zeta \sum_{j = 0}^\infty \Y_j \lambda^{j/2}
			=&
			\sum_{i = 1}^\infty \sum_{j = 0}^\infty \left( \frac{j}{2}\lambda^{i} \dot{\zeta_i} \Y_j \lambda^{j/2 -1} - i \lambda^{i - 1} \zeta_i \dot{\Y_j} \lambda^{j/2} \right)
			\\
			=&
			\sum_{i = 1}^\infty \lambda^{i-1} \left( \sum_{j = 0}^\infty \frac{j}{2} \dot{\zeta_i} \Y_j \lambda^{j/2} - i \zeta_i \dot{\Y_j} \lambda^{j/2} \right).
		\end{align*}
	}
	\red{Each $\zeta_i$ in $X_\zeta$ acts on a nonzero coefficient $\Y_j$ through the linear differential operator
		\begin{align*}
			\Xi_{\zeta, i} : = \lambda^{i} \dot{\zeta_i} \frac{\d}{\d \lambda} - i \lambda^{i-1} \zeta_i \frac{\d}{\d s}.
		\end{align*}
		In particular, since differentiation with respect to $\lambda$ lowers the $\lambda$-order by at most two, we see that for any $i \geq 1$,
		\begin{align*}
			\mu \left(\Xi_{\zeta, i} \Y \right) \geq \mu(\Y) + 2i - 2.
		\end{align*}
		Similarly, Lemma \ref{maxind} (with $M = M_1$) shows that for $K \geq 1$, $\zeta^{K/2}$ has an expansion of the form}	
	\red{
		\begin{align*}
			\zeta^{K/2} \sim \sum_{M = K}^\infty \WW_{K, M} \lambda^{M/2},
		\end{align*}
		where the term $\zeta_i$ can appear in $\WW_{K, M}$ only if
		\begin{align*}
			\frac{M - K + 2}{2} \geq i.
		\end{align*}
		This implies that multiplication by any term in the expansion of $\zeta^{K/2}$ which contains $\zeta_i$ also increases the $\lambda$-order $2i + K -  2$.
		\\
		\\
		Recall the notation $\z_K = (-1)^K \frac{X_\zeta^K}{K!}$ from Definition \ref{Zk}. If $\zeta_m$ appears in $\zeta^{K/2} \z_K$ through the factor $\zeta^{K/2}$, then it increases the $\lambda$-order of s by at least $2m + K - 2$. If instead (or in addition), $\zeta_m$ appears through one of the Hamiltonian vector fields $\Xi_{\zeta,m}$, then it raises the $\lambda$-order of $s$ by at least $2m - 2$. Combining the $2m-2$ increase from the differential operator with the $\lambda$-order $K$ increase from multiplication by $\zeta^{K/2}$, the total increase in the $\lambda$-order of $s$ is again no less than $2m + K  - 2$. Consequently, it follows that if $K \geq 2$, $\zeta_m$ appears only in coefficients $A_N$ where $N \geq 2m > 2m - 1$.
		\\
		\\
		In conjunction with the structure of $\zeta^{1/2} \z_1$ in Case 1 above, we see that
		\begin{align*}
			A_{2m - 1} = \frac{2m + 1}{2} \zeta_1^{1/2} \zeta_m + \Upsilon_{2m-1},
		\end{align*}
		with $\Upsilon_{2m - 1}$ depending only on $\zeta_1, \cdots, \zeta_{m-1}$ as stated in the proposition. It remains only to count the number of derivatives of $\zeta$ appearing in the remainder term $\Upsilon_{2m-1}$ together with their $\zeta$-weights (cf. Definition \ref{def:zeta weight}).
		\\
		\\
		Let $\q$ be any term in the remainder $\Upsilon_{2m-1}$. It arises from compositions of multiplication by $\zeta^{K/2}$ ($1 \leq K \leq 2m - 1$) and differentiation by a sequence of $K$ Hamiltonian vector fields $\Xi_{\zeta, i_j}$. Suppose that such a term arises from the composition
		\begin{align}
			\label{eq:Xicomposition}
			\Lambda_{M_1} [\zeta^{K/2}]  \Lambda_{M_2} [\Xi_{\zeta, i_K} \cdots \Xi_{\zeta, i_1} s],
		\end{align}
		where $M_1 + M_2 = 2m - 1$. Lemma \ref{maxind} tells us that the $\zeta$-weight of any term in $\Lambda_{M_1} [\zeta^{K/2}]$ is at most $M_1 - K$ and from the form of $\Xi_{\zeta, i_j}$, it is clear that they each contribute $\zeta$-weight $2 i_{j} - 2$. Since each $\Xi_{\zeta, i_j}$ raises the $\lambda$-order of $s$ by at least $2i_j - 2$, we have the bound
		\begin{align*}
			2 i_1 - 2 + \cdots + 2 i_K - 2 \leq M_2. 
		\end{align*}
		In total, the $\zeta$-weight of any term in the composition \eqref{eq:Xicomposition} is bounded by $M_1 + M_2 - K = 2m - 1 - K$. 
		\\
		\\
		We now find the $\zeta$-differential degree and the number of $s$-derivatives in a typical term of $\z_K$ which contributes to $\Upsilon_{2m-1}$. Note that $\z_1 = \frac{\d \zeta}{\d \lambda}$ does not contain any $s$-derivatives of the coefficients $\zeta_i$ and each application of $\Xi_{\zeta, i_j}$ increases the $\zeta$-differential degree by at most one. Hence, the total $\zeta$-differential degree of a term in $\z_K$ is at most $K-1$. In particular, since only the terms $\zeta^{1/2} \z_1, \cdots, \zeta^{(2m-1)/2} \z_{2m-1}$ contribute to $A_{2m-1}$, the $\lambda$-order of any $s$-derivative of $\zeta$-coefficients in $\Upsilon_{2m - 1}$ is no more than $2m - 2$, as claimed in the proposition. Multiplication by any term in the $\lambda$-expansion of $\zeta^{K/2}$ leaves the $\zeta$-differential degree unchanged. Hence, for any term in $\Upsilon_{2m-1}$, the sum of its $\zeta$-differential degree and its $\zeta$-weight bounded by $2m - 1 - K + K - 1 = 2m - 2$, which concludes the proof of the proposition.
	}		
\end{proof}

Proposition \ref{ZA} shows that the highest order coefficients are all generated by $-\zeta^{1/2} X_\zeta s$ and for each $m = \frac{M + 1}{2}$, we can read off $\zeta_m$ from the data $A_{2m - 1}, \zeta_1, \cdots, \zeta_{m -1}$. In \red{Proposition} \ref{AM}, we showed that $A_{2m-1}$ is a polynomial in the curvature jet which has a decomposition into maximal derivatives appearing linearly, \red{second order} submaximal derivatives \red{(in the sense of Definition \ref{def:linearquadratic})} having the same differential degree, and higher order terms which can be absorbed into the remainders in Theorem \ref{main}. Together with knowledge of $\zeta_1$, this allows us to recursively find subsequent $\zeta_m$ \red{in terms of curvature}. For example, we can read off $\zeta_1$ from $A_1$, while $A_2$ contains no \red{coefficients with $\zeta_i$ for $i \geq 2$}. We can then read off $\zeta_2$ from $A_3$ together with $\zeta_1$ and so on. \red{Since for each $m > 1$, $\zeta_m$ is absent in the coefficients $A_1, A_2, \cdots, A_{2m-2}$ and first appears in $A_{2m-1}$, we can restrict our focus to the case where $M = 2m-1$ is odd moving forward.} The map $\{\zeta_1, \cdots, \zeta_m\} \mapsto \{A_1, \cdots, A_{2m-1}\}$ is \red{invertible but} highly nonlinear; its inversion modulo lower order terms is one of the main goals of Section \ref{Integral invariants}.

	\begin{rema}		
		The computations in this section effectively deal with the structure of a one dimensional Hamiltonian Lie series and are in no way special to the convex billiards setting. They are valid whenever one has an interpolating Hamiltonian and, for example, apply equally well to symplectic, projective and outer (dual) billiards \cite{AlbersTabachnikov}, \cite{Tabachnikovprojectivebilliards}, \cite{TabachnikovDual}, \cite{BaraccoBernardiNardi}.
	\end{rema}

\section{Integral invariants}\label{Integral invariants}

Let us begin by comparing our results with those in \cite{MM}. From \eqref{Aps} and \eqref{first few}, we see that
\begin{align}
	\label{eq:zetaone}
	A_1 = \zeta_1^{3/2} = 2^{\red{3/2}} \ka^{-1} \implies \zeta_1 = 2 \ka^{-2/3}.
\end{align}
It follows that
\begin{align}
	\label{eq:Ione}
	\I_1 = - \int_0^\ell \zeta_1^{-1} ds = - \frac{1}{2} \int_0^\ell \ka^{2/3} ds.
\end{align}
For $\zeta_2$, computing algebraically gives
\begin{align*}
	A_3 =& 2 \zeta_1^{1/2} \zeta_2 + \frac{\zeta_2}{2 \zeta_1^{1/2}} \zeta_1 + \frac{1}{3!} \zeta_1^{3/2} \left( \zeta_1 \dot{\zeta_1}^2 + \zeta_1^2 \ddot{\zeta_1}\right)\\*
	=& \frac{5}{\sqrt{2}} \ka^{- 1/3} \zeta_2 + \frac{112\sqrt{2}}{27} \ka^{-5} \ka_1^2 - \frac{16 \sqrt{2}}{9} \ka^{-4} \ka_2.
\end{align*}
Equating this with formula \eqref{Aps} yields
\begin{align}
	\label{eq:zetatwo}
	\zeta_2 = \frac{1}{15} \ka^{-2/3} - \frac{32}{135} \ka^{-14/3} \ka_1^2 + \frac{8}{45} \ka^{-11/3} \ka_2,
\end{align}
which when integrated against $2 \zeta_1^{-3}$ gives
\begin{align}
	\label{eq:I2}
	\I_2 = \frac{1}{540} \int_0^\ell 9 \ka^{4/3} + 8 \ka^{-8/3} \ka_1^2 ds.
\end{align}
\red{These corroborate the formulas found in both \cite{MM} and \cite{Sorr15}, although the former has a small misprint ($-2$ instead of $-32$ in $\zeta_2$).}
\\
\\
\red{Let us now see how to combine the material in the preceding sections to find a formula for $\I_m$ in terms of the curvature. Proposition \ref{intinv} gives us
\begin{align*}
	\I_m
	= &
	\int_{\d \Omega} \Theta_m(s) ds
	\\
	=&
	\int_{\d \Omega} m! \zeta_1^{-m-1}  \zeta_{m} - (m-1)! \zeta_1^{-m-2} \sum_{j =1}^{m-2} (j + 1) (m - j)\zeta_{j+1} \zeta_{m - j} ds
	\\
	& -
	\int_{\d \Omega} \sum_{\ell = 1}^{m-2}\sum_{i = 0}^{\ell - 1} \zeta_1^{-m-2}  \frac{(m - 1 - \ell +i )! \ell!}{i!} (m-\ell) (\ell+1) \zeta_{m - \ell} \zeta_{\ell + 1} ds
	\\
	& + \hot,
\end{align*}
where $\hot$ denotes ``higher order terms'' coming from $\mathcal{R}_m^\Theta [ \zeta]$. We will see in Theorem \ref{Linear coeff} below that $\zeta_m = f_m \ka^{-2m + 1/3} \ka_{2m - 2} + \mathcal{R}_m^\zeta[\ka]$ for some constants $f_m$ and a remainder polynomial $\mathcal{R}_m^\zeta[\ka]$ \red{which is $(2m-2,2)$-harmless in the sense of Definition \ref{def:harmless}, from which it follows that the remainder terms above are also $(2m-2,3)$-harmless. Throughout this section, we will consistently use Corollary \ref{cor:harmlessremainders} to discard $(2m-2,3)$-harmless terms into the remainder of Theorem \ref{main}.}
\\
\\
Now, plugging in our formula for $\zeta_m$ in terms of $A_{2m-1}$ and $\Upsilon_{2m-1}$ from Proposition \ref{ZA} and also $\zeta_m$ in terms of $\ka_{2m -2}$ from Theorem \ref{Linear coeff}, we see that

\begin{align}\label{eq:Imzetakappa}
	\begin{split}
		\I_m =&
		\int_{\d \Omega} m! (2 \ka^{-2/3})^{-m-1} \times  \frac{2}{2m + 1} \zeta_1^{-\frac{1}{2}} \left(A_{2m- 1} - \Upsilon_{2m - 1} \right) ds
		\\
		&
		- \sum_{j =1}^{m-2} \int_{\d \Omega} (m-1)! (2 \ka^{-2/3})^{-m-2} (j + 1) (m - j) \times
		\\
		&
		\qquad \qquad  \left(\ka^{-2j - 2 + 1/3} f_{j+1}  \ka_{2j} + \RR_{j+1}^\zeta \right) \left(f_{m - j} \ka^{-2m +2j + 1/3} \ka_{2m - 2j - 2} + \RR_{m-j}^\zeta \right) ds
		\\
		& -
		\sum_{\ell = 1}^{m-2}\sum_{i = 0}^{\ell - 1} \int_{\d \Omega} (2 \ka^{-2/3})^{-m-2}  \frac{(m - 1 - \ell +i )! \ell!}{i!} (m-\ell) (\ell+1) \times
		\\
		&
		\qquad \qquad \left(f_{m - \ell} \ka^{-2m + 2 \ell + 1/3} \ka_{2m - 2 \ell - 2} + \RR_{m-\ell}^\zeta \right) \left(f_{\ell + 1} \ka^{-2\ell - 2 + 1/3} \ka_{2 \ell} +\RR_{\ell + 1}^\zeta \right)ds.
	\end{split}
\end{align}
Each monomial in $\RR_m^\zeta$ has differential degree at most $2m - 2$ and those terms with differential degree equal to $2m - 2$ are of second or higher order. Since both $\RR_{m-\ell}^\zeta$ and $\RR_{\ell + 1}^\zeta$ are multiplied by at least one additional derivative of $\ka$ in the formula above, they are $(2m-2,3)$-harmless and Corollary \ref{cor:harmlessremainders} shows that they contribute only to the remainder $\RR_m$ in Theorem \ref{main}.
\\
\\
Expanding $A_{2m-1}$ in terms of $\ka$ using Proposition \ref{AM}, we have
\begin{align}
	\label{eq:Imzetakappaupsilon}
	\begin{split}
		\I_m =&
		\int_{\d \Omega} m! (2 \ka^{-2/3})^{-m-1} \times  \frac{2}{2m + 1} (2 \ka^{-2/3})^{-\frac{1}{2}} \times
		\\
		&
		\bigg(- 2^\frac{6m -1}{2} \frac{ \kappa_{2m-2 }}{(2m)!} \kappa^{-2m}
		\\
		&
		+  2^\frac{6m + 1}{2} \ka^{-2m-1}  \sum_{p = 2}^{2m - 2} p \frac{ \kappa_{p - 1}\ka_{2m - p - 1} }{(p+1)! (2m - p + 1)!}  + \mathcal{R}_{2m-1}^A - \Upsilon_{2m - 1} \bigg)ds
		\\
		&
		- \sum_{j =1}^{m-2} \int_{\d \Omega} (m-1)! (2 \ka^{-2/3})^{-m-2} (j + 1) (m - j) \times
		\\
		&
		\qquad \qquad  \left(\ka^{-2j - 2 + 1/3} f_{j+1}  \ka_{2j} + \RR_{j+1}^\zeta \right) \times
		\\
		&
		\qquad \qquad \qquad \left(f_{m - j} \ka^{-2m +2j + 1/3} \ka_{2m - 2j - 2} + \RR_{m-j}^\zeta \right) ds\\
		& -
		\sum_{\ell = 1}^{m-2}\sum_{i = 0}^{\ell - 1} \int_{\d \Omega} (2 \ka^{-2/3})^{-m-2}  \frac{(m - 1 - \ell +i )! \ell!}{i!} (m-\ell) (\ell+1) \times
		\\
		&
		\qquad \qquad \left(f_{m - \ell} \ka^{-2m + 2 \ell + 1/3} \ka_{2m - 2 \ell - 2} + \RR_{m-\ell}^\zeta \right) \times
		\\
		&
		\qquad \qquad \qquad \left(f_{\ell + 1} \ka^{-2\ell - 2 + 1/3} \ka_{2 \ell} +\RR_{\ell + 1}^\zeta \right)ds.
	\end{split}
\end{align}
The only remaining term which has not been fully expanded is $\Upsilon_{2m - 1}$, which is a polynomial in $\zeta_1^{\pm \half}, \zeta_2 \cdots, \zeta_{m-1}$ together with their $s$-derivatives of order at most $2m-2$. As explained in the proof of Proposition \ref{ZA}, these terms arise from a Lie series in which the Hamiltonian vector field of $\zeta$ is iterated. $\Upsilon_{2m - 1}$ itself contains both linear and second order terms, which we now find in Sections \ref{sec: linear terms with maximal derivatives} and \ref{sec: quadratic terms of maximal differential degree} respectively.
}

\subsection{Linear terms with maximal derivatives}\label{sec: linear terms with maximal derivatives}

Recall the formula for $\Theta_m$ in Proposition \ref{intinv}, which together with
\begin{align*}
	\zeta_m = \frac{2}{2m + 1} \zeta_1^{-1/2} \left(A_{2m - 1} - \Upsilon_{2m - 1}\right),
\end{align*}
in Proposition \ref{ZA} and
\begin{align*}
	A_{2m - 1} =& 	- 2^\frac{3M + 2}{2} \frac{ \kappa_{M - 1}}{(M+1)!} \kappa^{-M-1}\\
	&+ 2^\frac{3M + 4}{2} \ka^{-M-2}  \sum_{p = 2}^{M-1} p \frac{ \kappa_{p - 1}\ka_{M - p} }{(p+1)! (M - p + 2)!} + \RR_{M}^A,
\end{align*}
in Proposition \ref{AM}, gives a recipe for computing $\Theta_m$ and hence $\I_m$ in terms of curvature. In Theorem \ref{Linear coeff} below, we will show that modulo monomials containing at least two differentiated factors of $\ka$, $\zeta_m = F_m[\ka] \ka_{2m -2}$ \red{modulo second and higher order terms}, where $F_m$ is an algebraic function of $\ka^{\pm1/3}$. Part of $F_m$ comes from $A_{2m-1}$ while the other part arises from a single term in $\Upsilon_{2m - 1}$. The maximal derivatives turn out to appear linearly in $\zeta_m$, \red{in the sense of Definition \ref{def:linearquadratic}}. This allows us to plug in the highest order derivatives in $\zeta_n$ ($n < m$) to find the \red{second order} part of $\Upsilon_{2m-1}$, which together with that of $A_{2m-1}$, gives $\mathcal{P}_m(\ka^{\pm \frac{1}{3}}, \ka_1, \cdots, \ka_{m-1})$ in Theorem \ref{main}.

\begin{theo}\label{Linear coeff}
	For $m \geq 2$, the coefficients $\zeta_m$ of the interpolating Hamiltonian are of the form
	\begin{align*}
		\zeta_m = f_m \kappa^{-2m + 1/3} \kappa_{ 2m - 2} + \mathcal{R}_m^\zeta,
	\end{align*}
	where
	\begin{align*}
		f_m =& - \frac{2^{3m + 1}}{(2m)!} B_{2m} = (-1)^{m} \frac{2^{3m + 2}}{(2\pi)^{2m}} \zr(2m).
	\end{align*}
	Here, $\zr$ is the Riemann $\zeta$-function and $B_{2n}$ are the even Bernoulli numbers. $\mathcal{R}_m^\zeta$ is a remainder term which \red{is $(2m-2,2)$-harmless}.
\end{theo}

\begin{rema}
	 \red{If $s > 0$, multiplication of any differential degree $2m - 2 - s$ curvature monomial in $\RR_m^\zeta$ by another curvature monomial having differential degree $s$ will then be $(2m-2,3)$-harmless in the sense of Definition \ref{def:harmless}. The structure of $\RR_m^\zeta$ will be important in applying Corollary \ref{threeterms} (equivalently, Corollary \ref{cor:harmlessremainders}) in Section \ref{subsec: integration by parts}. }
\end{rema}

\begin{proof}
	In Proposition \ref{ZA}, we saw that $\zeta_m$ can be determined from the data $A_{2m - 1}, \zeta_1, \cdots, \zeta_{m -1}$:
	\begin{align*}
		\zeta_m= \frac{2}{2m + 1} \zeta_1^{-\frac{1}{2}} \left(A_{2m- 1} - \Upsilon_{2m - 1} \right),
	\end{align*}
	and in formula \eqref{Aps}, together with Proposition \ref{AM}, we computed $A_{2m-1}$ geometrically modulo lower order terms. We now determine more carefully the structure of $\Upsilon_{2m - 1}$, separating out a sum of linear terms \red{(in the sense of Definition \ref{def:linearquadratic})} arising from $L_{8,1,1}^k \zeta_{m - k}$.
	\\
	\\
	From equations \eqref{smallzeta} and \eqref{smallA}, we see that the Theorem is satisfied for $m = 2$. We now proceed inductively \red{with respect to $m$, while still retaining the notation $M= 2m-1$ to avoid cumbersome notation}. Assume the proposition is true for $2 \leq n \leq m - 1$ and write
	\begin{align}
		\label{eq:zetakappa}
		\begin{split}
			\zeta_{m} &= \frac{2}{2m + 1} \zeta_1^{-\frac{1}{2}} \left(A_{2m - 1} - \Upsilon_{2m - 1} \right)
			\\
			=&
			\frac{2}{2m + 1} \frac{\ka^{1/3}}{\sqrt{2}}  \left(- 2^{\frac{6m - 1}{2}}  \frac{\ka_{2m - 2}}{(2m)!} \ka^{-2m} - \Upsilon_{2m - 1}  \right) + R_{2m - 1, 0}^\zeta
			\\
			=& - \frac{8^m}{(2m + 1)!}  \ka_{2m - 2} \ka^{-2m + 1/3} - \frac{\sqrt{2} \ka^{1/3}}{2m+1} \Upsilon_{2m -1} + R_{2m -1, 0}^\zeta,
		\end{split}
	\end{align}
where
\begin{align*}
	R_{2m - 1,0}^\zeta = \frac{\sqrt{2} \ka^{1/3}}{2m + 1} \left(A_M^{(2)} + \RR_M^A\right)
\end{align*}

\red{is $(2m-2,2)$-harmless as in the statement of the theorem; $A_{2m-1}^{(2)}$ is the second order part of $A_{2m-1}$ in Proposition \ref{AM} and $\RR_{2m - 1}^A$ is the remainder, which contains higher order terms in the sense of Definition \ref{def:linearquadratic}}.
	\\
	\\
	Set $K = 2k + 1$ and consider a term in the sum \eqref{sp} which contributes to the coefficient $A_M$ of $\lambda^{M/2}$. Since $\zeta^{K/2} = O(\lambda^{K/2})$ for all  $K$, only terms from $\zeta^{K/2} X_\zeta^K s$ with  $1 \leq K \leq M$ contribute to $A_M$. In the notation of Definition \ref{notationdef}, we have
	\begin{align*}
		A_M &= \sum_{k = 0}^{\frac{M-1}{2}} \Lambda_M \left[\zeta^{k + 1/2} \z_{2k + 1} \right] = \sum_{k = 0}^{\frac{M-1}{2}} \frac{1}{(2k + 1)!} \Lambda_M \left[\zeta^{k + 1/2} L^k \z_{1} \right]\\*
		&=  \sum_{k = 0}^{\frac{M-1}{2}} \frac{1}{(2k + 1)!} \sum_{\substack{M_1 + M_2 = M\\ M_1 \geq 1\\ M_2 \geq 0 }} 		\Lambda_{M_1}\left[\zeta^{k + 1/2}\right]  \Lambda_{M_2} \left[L^k  \z_{1} \right]\\*
		&=  \sum_{k = 0}^{\frac{M-1}{2}} \frac{1}{(2k + 1)!} \sum_{\substack{M_1 + M_2 = M\\ M_1 \geq 1\\ M_2 \geq 0 }} 		\Lambda_{M_1}\left[\zeta^{k + 1/2}\right]  \sum_{{\bm{\sigma}}:\Z_{k} \to \Z_{8}}  \Lambda_{M_2} \left[ L_{\bm{\sigma}} \z_{1} \right].
	\end{align*}
	The last sum is over all maps $\bm{\sigma}: \Z_k \to \Z_8$ and contains terms of the form 
	\begin{align*}
		\sum_{\bm{\sigma}:\Z_{k} \to \Z_{8}} \sum_{0 \leq j_1 \leq j_2 \leq \cdots \leq j_k \leq M_2} \Lambda_{M_2} \left[ L_{{\sigma}_{k}} \Lambda_{j_{k}} \left[ L_{{\sigma}_{{k-1}}} \Lambda_{j_{k-1}} \left[\cdots \right] \right]\right].
	\end{align*}
	\vspace{0.1in}

	For $k = 0$ we have by the proof of Proposition \ref{ZA} above, that the term $- \zeta^{1/2} X_\zeta s = \zeta^{1/2} \z_1$ in the first line of \eqref{first few} is given by
	\begin{align*}
		\zeta^{1/2} \frac{\d \zeta}{\d \lambda} &= \zeta_1^{1/2} \lambda^{1/2} \left(  \sum_{i = 0}^\infty \sum_{j = 0}^i  \binom{\frac{1}{2}}{j} \sum_{\substack{i_1 + \cdots + i_j = i\\ i_p \geq 1}} \wt{\zeta}_{i_1 + 1} \cdots \wt{\zeta}_{i_j + 1} \lambda^i \right) \left(\sum_{\ell = 1}^\infty \ell \zeta_\ell \lambda^{\ell-1}\right).
	\end{align*}
	Recall from formula \eqref{j0} in Case $1$ of the proof of Proposition \ref{ZA} that the maximal terms from which we found $\zeta_m$ come from the endpoints $i = \frac{M-1}{2}, j = 1, \ell = 1$ and $i = j = 0, \ell = \frac{M+ 1}{2}$. Note that all of the terms in $\Lambda_M \left[\zeta^{1/2} \frac{\d \zeta}{\d \lambda}\right]$ have $M = 1 + 2i + 2 \ell - 2$ and indices satisfying
	\begin{align*}
		& 2 (i_1 + 1) - 2 + 2(i_2 + 1) - 2 + \cdots + 2(i_j + 1) - 2 + 2 \ell - 2\\
		&= 2i + 2 \ell - 2 = M - 1 = 2m - 2.
	\end{align*}
	Separating out the maximal terms, we see by the inductive hypothesis that the remaining terms have $\zeta$-indices {less than or equal to} $m-1$ and each monomial has differential degree {at most} $2m - 2$. In each case, we have $i + \ell = \frac{M+1}{2}$. If $j > 1$, there are at least two terms with $\zeta$-indices {greater than or equal to} two. When $j = 1$, if $i \neq 0, \frac{M-1}{2}$, then $\ell \geq 2$, in which case there are again at least two terms with $\zeta$-indices greater than or equal to two. Hence, there are no terms other than $\frac{2m+1}{2} \zeta_1^{1/2}\zeta_m $ in $\Lambda_M[\zeta^{1/2} \z_1]$ which have $\ka$-derivatives of order {greater than} $2m - 3$; in particular, all remaining terms in $\Lambda_M[\zeta^{1/2} \z_1]$ which have differential degree equal to $2m-2$ are $(2m-2,2)$-harmless.
	\\
	\\
	We now claim that for $k \geq 1$, all terms in $\zeta^{k + 1/2} L^k \z_1$, except for one, generate data with submaximal differential degree and/or contain \red{multiple derivatives} distributed across at least two factors, as in the statement of the theorem. The exceptional term will be $\zeta^{k + 1/2} L_{8,1,1}^k \z_1$ for reasons to be made clear shortly. \red{The term $\zeta^{(2k+1)/2}$ already contributes a factor of $\lambda^{(2k + 1)/2}$, so in order to appear in the coefficient of $\lambda^{(2m - 1)/2}$, any term arising from $L^k \z_1$ should have order at most $2m - 2k - 2$.}
	\\
	\\
	\red{\textbf{Claim 1:}} \red{Under the induction hypothesis, for} each $0 \leq k \leq m - 1$ and $0 \leq j \leq 2m - 2k - 2$ even, $\Lambda_j\left[\z_{2k + 1} \right]$ is a polynomial in the curvature jet with coefficients in $\R[\ka^{\pm \frac{1}{3}}]$ and differential degree at most $j + 2k$.
	\\
	\\
	\red{To see why this is true, we begin} with $\z_1$. By the primary induction hypothesis on $\zeta_i$, for $k = 0$ and any $0 \leq j \leq 2m - 2k - 2$ even, we have that
	\begin{align}\label{Zdiff}
		\begin{split}
		\Lambda_{j}\left[\z_1\right] =& \Lambda_{j} \left[\sum_{i = 1}^\infty i \zeta_i \lambda^{i - 1}\right]
		\\
		=&
		{\left(\frac{j}{2} + 1 \right) \zeta_{\frac{j}{2} + 1} }
		\\
		=&
		\left(\frac{j}{2} + 1\right) f_{\frac{j}{2} + 1} \kappa^{- j - \frac{5}{3}} \kappa_{j} 
		+\left(\frac{j}{2} + 1\right)  \mathcal{R}_{\frac{j}{2} + 1}^\zeta.
	\end{split}
	\end{align}
	In particular, $\Lambda_{j}\left[\z_1\right]$ has differential degree at most $j$. The term $\ka_{j}$ has differential degree $j + 2 \cdot 0$, but arises from $L_{8,1,1}^k \z_1$ with $k = 0$.	 \red{It is easy to see from the formula in Definition \ref{L} that each subsequent application of an operator $L_{\sigma_i, q_i, r_i}$ increases the half power of $\lambda$ by $2 (q_i + r_i -2)$ and raises the differential degree by at most $2q_i + 2r_i -2$. Hence, each application raises the differential degree at most two more than it raises the half power and after $k$ applications of $L$, the differential degree will have increased by no more than $2k$.
	}
	\\
	\\
	\textbf{\red{Claim 2.}} Each monomial of differential degree equal to $j + 2k$ in $\Lambda_j[\z_{2k + 1}]$ which \red{does not arise from} $L_{8,1,1}^k \z_1$ \red{is of second or higher order, i.e.,} containing at least two factors of $\kappa$ derivatives having order \red{at least} one.
	\\
	\\
	To prove Claim 2, let $\Y$ be any monomial in $\z_{\ell}$ with $1 \leq \ell \leq 2k + 1$. Except for $L_5$ and $L_{8,1,1}$, $L_{i,q,r} \mathcal{Y}$ contains at least two separate factors being differentiated in $\zeta_q, \zeta_r$ or $\mathcal{Y}$. Since each of these is assumed to be a polynomial in the curvature jet, there are at least two separated derivatives. For the term $L_{5}  \Y \lambda^{\frac{j}{2}}$, the only case in which there is a single factor being differentiated is when $q = 1$ and $\Y = \zeta_1$. But $\zeta_1$ only appears undifferentiated in $\z_1 = \frac{\d \zeta}{\d \lambda}$ as $\zeta_1 \lambda^0$, in which case the operator $\frac{\d}{\d \lambda}$ in $L_5$ annihilates it. This leaves only terms arising from $L_{8,1,1}^k$, which finishes the proof of Claim 2.
	\\
	\\
	To complete the proof of the theorem, we now estimate the structure and differential degree of the terms $\zeta^{k + 1/2} \z_{2k + 1}$ for $k > 0$. In order to keep only the highest order derivatives, $\frac{\d^2}{\d s^2}$ in $L_{8,1,1}$ should be applied repeatedly to the coefficients $\zeta_{\frac{2m - 1 -(2k+1) + 2}{2}}$. By the inductive hypothesis, \red{the contribution of $\Upsilon_{2m - 1}$ to the highest order derivatives of $\kappa$ in $\zeta_m$ is of the form}
	\begin{align*}
		\sum_{k = 1}^{m-1}& \frac{1}{(2k + 1)!} \zeta^{(2k + 1)/2} X_\zeta^{2k} \z_1\\
		=& \sum_{k = 1}^{m-1} \frac{1}{(2k+1)!} \zeta_1^{(2k + 1)/2} L_{8,1,1}^k \z_{1, 2m  - 2k - 2} + \mathcal{H}_{2m-2,2}\\
		=& \sum_{k = 1}^{m-1} \frac{(m-k)}{(2k + 1)!} \zeta_1^{(2k + 1)/2} \zeta_1^{2k} \frac{d^{2k}}{d s^{2k}} \zeta_{m - k} + \mathcal{H}_{2m-2,2}\\
		= & \sum_{k = 1}^{m-1} \frac{(m-k)}{(2k + 1)!} \zeta_1^{3k + 1/2} F_{m-k} [\ka] \ka_{2m - 2} + \mathcal{H}_{2m-2,2},
	\end{align*}
	where $\mathcal{H}_{2m-2,2}$ denotes \red{$(2m-2,2)$ harmless remainders} and $F_{m-k}[\ka]$ is an algebraic function of $\ka^{\pm\frac{1}{3}}$ with coefficients depending only on $m - k$. For $m - k = 1$, we already know that
	\begin{align*}
		\zeta_1 = 2 \ka^{-2/3}.
	\end{align*}
	We set $F_1 = (-4/3) \ka^{-5/3}$, so that
	\begin{align*}
		\frac{d^{2k} \zeta_1}{d s^{2k}} = F_1[\ka] \ka_{2k} + \mathcal{H}_{2k, 2}.
	\end{align*}
	Taking $M = 2m - 1$ and $1 < k < m$, the $F_k$ are determined inductively. To find $F_m$, note that for $m > 1$ we have
	\begin{align*}
		\zeta_m &= \frac{2}{2m + 1} \zeta_1^{-1/2} \left( A_{2m - 1} - \Upsilon_{2m - 1} \right) = F_m[\ka] \ka_{2m - 2} + \mathcal{H}_{2m-2,2}.
	\end{align*}
	Denote by
\begin{align*}
	a_{2m - 1} = - \frac{8^{m}}{ \sqrt{2} (2m)!},
\end{align*}
the coefficient of $\kappa_{2m-2}\ka^{-2m}$ in $A_{2m-1}^{\text{lin}}$ given in Proposition \ref{AM}. Then, \red{we have}
	\begin{align}\label{Recurrence}
		F_m & = \frac{2}{2m + 1} \zeta_1^{-1/2} a_{2m -1} \red{\ka^{-2m}}  - \frac{2}{2m +1} \sum_{k = 1}^{m-1} \frac{m-k}{(2k + 1)!}  \zeta_1^{3k} F_{m - k}.
	\end{align}
	Using the induction hypothesis, we immediately see that $F_m$ is a multiple of $\ka^{-2m +1/3}$ \red{by a universal constant \red{$f_m$} of combinatorial nature,} as in the statement of the theorem. We can find the coefficient $f_m$ explicitly as follows. The infinite order recurrence relation \eqref{Recurrence} can be written as
	\begin{align}\label{recursionsequence}
		f_m = - \frac{8^m}{(2m+1)!} - \frac{2}{2m + 1} \sum_{k = 1}^{m-1} \frac{8^k}{(2k + 1)!} (m-k) f_{m-k}, \quad f_1 = -4/3.
	\end{align}
	We assemble the coefficients into the following generating functions:\footnote{\red{It is not obvious as of yet that the generating function for $f_m$ is convergent, so for now, we write $f$ as a formal power series.}}
\begin{align*}
	f(z) =& \sum_{m = 1}^\infty f_m z^m,\\
	a(z) =& \red{-} \sum_{m = 1}^\infty \frac{2}{2m + 1} \frac{a_{2m - 1}}{\red{\sqrt{2}}} z^m\\
	=& \sum_{m = 1}^\infty \frac{8^m}{(2m + 1)!} z^m = \frac{\sinh( \sqrt{8 z})}{\sqrt{8z}} - 1.
\end{align*}
\red{Now, for any formal power series $g = \sum_{m = 1}^\infty g_m z^m$, we have the following identity:} 
$$
\sum_{m = 1}^{\infty} (2m + 1) g_m z^m = 2(z g)' - g.
$$
\red{Setting} $g(z) = f(z) + a(z)$, we can rewrite the recurrence relation \eqref{recursionsequence} as
\begin{align*}
	2 \frac{d}{d z} \bigg(z f(z) + z a(z)\bigg) - f(z) - a(z) = - 2 z a(z) \frac{d f}{d z},
\end{align*}
or equivalently,
\begin{align*}
		f' + \frac{1}{2z(1+ a)} f = - \frac{2z a' + a}{2z (1+ a)},
\end{align*}
which is a simple ODE. The Taylor coefficients of $f$ will then give the sequence $f_m$. Simplifying using double and half angle formulas for the hyperbolic sine function, we have
\begin{align*}
	 \frac{1}{2z (1 + a(z))} =&  \sqrt{\frac{2}{z}} \frac{1}{\sinh \sqrt{8z}},\\
	- \frac{2z a'(z) + a(z)}{2 z (1 + a(z))} =&- \sqrt{\frac{2}{z}} \tanh \sqrt{2 z}.
\end{align*}

If we define the integrating factors
\begin{align*}
	\mu_\pm (z) = \exp \pm \int^z \sqrt{\frac{2}{w}} \frac{1}{\sinh\sqrt{8 w}} dw =  \left(\tanh \sqrt{2 z}\right)^{\pm 1},
\end{align*}
the equation then simplifies to
\begin{align*}
	f'(z) + \sqrt{\frac{2}{z}} \frac{1}{\sinh \sqrt{8z}} f(z) = \mu_-(z) \frac{d}{d z}\big(f (z) \mu_+(z) \big) = - \sqrt{\frac{2}{z}} \mu_+(z),
\end{align*}
which has the solution
	\begin{align}
		\label{eq:fofz}
		\begin{split}
			f(z) = & - \red{\coth \sqrt{2z} }\int^z \sqrt{\frac{2}{w}} \tanh^2 \sqrt{2 w} dw + C \coth \sqrt{2z}\\
			= & 2 - \sqrt{8 z} \coth \sqrt{2z} + C \coth \sqrt{2z}.
		\end{split}
	\end{align}
One can easily check that this indeed solves the equation. The initial condition $f(0) = 0$ gives $C = 0$ and one checks that $\lim_{z \to 0} 2 - \sqrt{8z} \coth \sqrt{2z} = 0$ automatically. Taylor expanding at $z = 0$, we have
\begin{align}
	f(z) = 2 - \sqrt{8 z} \sum_{n = 0}^{\infty} \frac{2^{2n} B_{2n} (2z)^{\frac{2n - 1}{2}}}{(2n)!},
\end{align}
with $B_{2n}$ being the even Bernoulli numbers \red{(see \url{https://dlmf.nist.gov/4.19})}. The theorem then follows from the relation between Bernoulli numbers and the Riemann zeta function at the even integers.
\end{proof}

\begin{rema}
	\red{Expanding \eqref{eq:fofz}, we see that
	\begin{align*}
		f(z) = & 2 - \sqrt{8 z} \coth \sqrt{2z} \sim \sum_{m =1}^\infty f_m z^m\\
		=& -\frac{4}{3} z + \frac{8}{45} z^2 + O(z^3).
	\end{align*}
	In particular, $f_2 = 8/45$ corroborates the coefficient of $\ka_2$ in $\zeta_2$ from equation (4.5) of \cite{MM} (equivalently, formula \eqref{eq:zetatwo} above).}
\end{rema}

\red{
 In light of Theorem \ref{Linear coeff}, we can now explain the significance of the term $\zeta$-weight from Definition \ref{def:zeta weight}.
 
\begin{coro}
	\label{cor:zeta weight bounds ddeg}
	For any polynomial $\q$ in the jet of $\{\zeta_1, \cdots, \zeta_m\}$ with coefficients in $\R[\zeta_1^{\pm 1/2}]$,
	\begin{align*}
		\deg_{\d, \kappa} (\q) \leq \deg_{\d, \zeta}(\q) + w_\zeta(\q),
	\end{align*}
	where $\deg_{\d, \kappa}(\q)$ and $\deg_{\d, \zeta}(\q)$ are the multi-variable $\kappa$ and $\zeta$ differential degrees of $\q$ and $w_\zeta(\q)$ is its $\zeta$-weight (cf. Definitions \ref{def:zeta weight} and \ref{def: multi-fold diff degree}).
\end{coro}
}

\red{Throughout the remainder of the paper, we will use
\begin{align}
	\label{eq:msfzk}
	\msfz_k = \zr(2k) = \sum_{q = 1}^\infty \frac{1}{q^{2k}}
\end{align}
to denote the Riemann $\zeta$-function at even integers. This notation is not to be confused with $\z_K$ from Definition \ref{Zk}, which was the $K$th term of a Lie series.}

\subsection{\red{Second order terms of maximal differential degree}}
\label{sec: quadratic terms of maximal differential degree}

\red{We now list all second order terms in $\Upsilon_{2m - 1}$.} \red{Using} the integration by parts algorithm \red{in} Proposition \ref{IBPalgorithm} and Corollary \ref{threeterms}, we only need to find terms in $\mathcal{P}_{m}[\ka]$ \red{(the integrand of $\I_m$ in Theorem \ref{main})} with \red{maximal differential degree and} at most two differentiated factors of $\ka$. By Theorem \ref{Linear coeff}, these are precisely the monomials in the Taylor coefficients of $\zeta$ \red{with maximal differential degree and} at most two nontrivial \red{(greater than or equal to two)} $\zeta$-indices, \red{which contribute to the integrand $\Theta_m[\zeta]$ of $\I_m$ (cf. Proposition \ref{intinv})}.
\\
\\
\red{Recall the formula
\begin{align*}
	A_{2m-1} = \Lambda_{2m - 1} \left[\sum_{k = 0}^\infty \frac{\zeta^{(2k+1)/2}}{(2k+  1)!} L^k \z_1 \right]
	= \frac{2m+1}{2} \zeta_1^{1/2} \zeta_m + \Upsilon_{2m - 1}
\end{align*}
from Proposition \ref{ZA}. The term $\Upsilon_{2m - 1}$ consists of $\lambda^{(2m-1)/2}$-coefficients which do not come from ``endpoint'' indices, i.e., those which generate $\zeta_m$ as a factor. The \red{second order} terms of maximal differential degree in $\Upsilon_{2m - 1}$ arise in a few different ways. We organize them as follows.}
\\
\\
\red{Consider a $k$-letter word $\bm{\sigma} = (\sigma_1, \cdots, \sigma_k)$ in the alphabet $\{1,2,3,4,5,6,7,8\}$. To each such word corresponds a sequence of compositions as in \eqref{eq:wordcompositions}. We will say that a letter $\sigma_b$ is \textbf{operator-left} of a letter $\sigma_a$ if $b > a$:
\begin{align*}
	L_{\bm{\sigma}} = L_{\sigma_k} \cdots \circ L_{\sigma_b} \circ \cdots \circ \cdots L_{\sigma_a} \cdots L_{\sigma_1}.
\end{align*}
The dictionary of all such words can be organized into the following mutually exclusive types:}

\red{
\begin{itemize}
	\item \textbf{Type-1. } If $k = 0$, there is only one word, the empty word.
	
	\item \textbf{Type-2.} $k \geq 1$ and the word contains at least one of the letters in $\{1,2,3\}$.
	
	\item \textbf{Type-3. } $k \geq 1$, the word contains no letters in $\{1,2,3\}$, \textit{exactly one} of the letters $4,5,6,$ or $7$, and no incidence of $8$ to the operator-left of that letter.
	
	\item \textbf{Type-4.} $k \geq 1$, the word contains no letters in $\{1,2,3\}$, exactly one $4,5,6$ or $7$, and at least one incidence of $8$ to the operator left of that letter.
	
	\item \textbf{Type-5.} $k \geq 1$, the word has no letters in $\{1,2,3\}$ and contains at least two of the letters $4,5,6$ or $7$, counted with multiplicity.
	
	\item \textbf{Type-6.} $k \geq 1$ and the word contains no letters other than $8$.
\end{itemize}

\begin{exam}
	Consider the $4$-letter word $\bm{\sigma} = (4,6,7,8)$. It corresponds to the operator
	\begin{align*}
		L_{\bm{\sigma}} = L_8 \circ L_7 \circ L_6 \circ L_4,
	\end{align*}
	which is type-$5$. Here are some other prototypical $4$-letter words of each type
	\begin{align*}
		(1,3,4,6), & \qquad \text{type-$2$},
		\\
		(8, 8, 8, 4), & \qquad \text{type-$3$},
		\\
		(8, 8, 4, 8), & \qquad \text{type-$4$},
		\\
		(4,8,6,8), & \qquad \text{type-$5$},
		\\
		(8,8,8,8), & \qquad \text{type-$6$}.
	\end{align*}
\end{exam}
These word types classify the \textit{differential operator} part of a term in $\zeta^{(2k+1)/2} \frac{1}{(2k+1)!} L^k \z_1$ (i.e., that which comes from $L^k$ rather than multiplication by $\zeta^{(2k+1)/2}$). Below, we present several cases which incorporate both the differential operator part and the multiplicative part from $\zeta^{(2k+1)/2}$, which introduces nontrivial $\zeta$-indices. This allows us to separate out second order terms from $(2m - 2,3)$-harmless remainders.}
\\
\\
\textbf{Type 1 words.} $k = 0$ (equivalently, $K = 2k+1 = 1$). \red{The empty word (type-$1$)} generates the product $\zeta^{1/2} \frac{\d \zeta}{\d \lambda}$. Recall the structure of $\Lambda_{M_1}\left[\zeta^{K/2}\right]$ from Lemma \ref{maxind}:
\red{
\begin{align*}
	\Lambda_{M_1} \left[ \zeta^{\frac{K}{2}} \right] =& \frac{K}{2} \zeta_1^{\frac{K -2}{2}} \zeta_{\frac{M_1 - K +2}{2}} + \left(\frac{K^2 - 2K}{8}\right) \zeta_1^{\frac{K- 4}{2}}\\
	&\times \sum_{\substack{i_1 + i_2 = \frac{M_1 - K}{2}\\ i_\ell \geq 1}} \zeta_{i_1 + 1} \zeta_{i_2 + 1} + \upsilon_{M_1, K}[\zeta].
\end{align*}
Choosing $M_2$ so that $M_1 + M_2 = 2m - 1$, we can write the contribution of $k = 0$ terms to $A_{2m-1}$ as
\begin{align*}
	\sum_{M_1 + M_2 = 2m - 1} \Lambda_{M_1} \left[\zeta^{1/2}\right] \Lambda_{M_2} \left[\frac{\d \zeta}{\d \lambda}\right].
\end{align*}
}
For the factor
\begin{align*}
	\frac{1}{2} \zeta_1^{-\frac{1}{2}} \zeta_{\frac{M_1 + 1}{2}} \red{\left(\frac{M_2}{2} + 1\right) \zeta_{\frac{M_2}{2} + 1}},
\end{align*}
the endpoints $M_1 = 1, M_2 = 2m -2$ and $M_1 = 2m-1, M_2 = 0$ are not included in $\Upsilon_{2m - 1}$ as they involve $\zeta_m$. Hence, we take $M_1 + M_2 = 2m - 1$ and $3 \leq M_1 \leq 2m - 3$ to be odd. \red{Reindexing with $2j + 1 = M_1$, we get
\begin{align*}
	\frac{1}{2} \zeta_1^{-1/2} \sum_{j = 1}^{m-2} \zeta_{j+1} (m - j) \zeta_{m - j}.
\end{align*}
}
In the \red{term}
\begin{align*}
	\left(\frac{K^2 - 2K}{8}\right) \zeta_1^{\frac{K- 4}{2}} \sum_{\substack{i_1 + i_2 = \frac{M_1 - K}{2}\\ i_\ell \geq 1}} \zeta_{i_1 + 1} \zeta_{i_2 + 1} + \upsilon_{M_1, K}[\zeta],
\end{align*}
the condition $K = 1$ forces $M_1 > 1$, so we take $M_1 = 2m - 1$ and $M_2 = 0$; if $M_2 \geq 1$, there are at least three nontrivial $\zeta$-indices, \red{so its contribution to $\Theta_m$ is $(2m-2,3)$-harmless}. \red{Reindexing with $2 i + 1 = M_1$, its contribution to $\Upsilon_{2m - 1}$ is
\begin{align*}
	- \frac{1}{8} \zeta_1^{-3/2} \sum_{i = 1}^{m - 2} \zeta_{i + 1} \zeta_{m - i} \zeta_1.
\end{align*}
}
The relevant \red{second order} terms \red{which contribute to $\Upsilon_{2m - 1}$ when $k = 0$} are then
\begin{align*}
	\frac{1}{2} \zeta_1^{-1/2} &\sum_{j = 1}^{m - 2} \zeta_{j+1} (m-j) \zeta_{m - j}
	 - \frac{1}{8} \zeta_1^{-3/2} \sum_{i = 1}^{m - 2} \zeta_{i + 1} \zeta_{m - i} \zeta_1
	 \\
	=&
	\zeta_1^{-1/2} \sum_{i = 1}^{m - 2} \left(\frac{(m-i)}{2} - \frac{1}{8}\right) \zeta_{i + 1} \zeta_{m - i}.
\end{align*}
\vspace{0.1in}
\\
\textbf{Type 2 words} \red{Suppose that $k \geq 1$ and consider the action of a type-$2$ word, i.e., at least one of the operators $L_1, L_2$ or $L_3$ is applied. Proposition \ref{ZA} shows that the action of the differential operators $L_1, L_2$ and $L_3$ on any term $\nu \lambda^p$ in $L^{j} \z_1$ (with $0 \leq p \leq 2m - 2$ and $1 \leq j \leq k-1$) satisfies
\begin{align}
	\label{eq:L123}\begin{split}
	w_\zeta (\Lambda_q [L_i \nu \lambda^p ]) + \deg_{\d, \zeta} ( \Lambda_q [L_ i \nu \lambda^p]) \leq 2m - 2,
	\end{split}
\end{align}
for $ i = 1,2,3,$ and $0 \leq q \leq 2m - 1$; those terms for which \eqref{eq:L123} is an equality contain at least three differentiated factors of $\ka$. Indeed,
\begin{align*}
	L_1 = \frac{\d \zeta}{\d s} \frac{\d^2 \zeta}{\d s \d \lambda} \frac{\d}{\d \lambda}
\end{align*}
contains two $s$ derivatives of $\zeta$ in the coefficients and $\frac{\d}{\d \lambda}$ either (1) annihilates {$\zeta_1$ while preserving $\zeta_2 \lambda$ in $\z_1$  or (2) is precomposed with another differential operator $L_{i'}$ (to the right) which produces at least one $s$-derivatives of $\zeta$.} The same order counting principle clearly applies to
\begin{align*}
	L_2 = \left(\frac{\d \zeta}{\d s} \right)^2 \frac{\d^2}{\d \lambda^2}
	\quad \text{and} \quad
	L_3 = - \frac{\d \zeta}{\d s} \frac{\d^2 \zeta}{\d \lambda^2} \frac{\d}{\d s}.
\end{align*}
In other words, their contributions to $\Upsilon_{2m - 1}$ are $(2m-2, 3)$-harmless.}
\\
\\
\textbf{Type 3 words.} \red{Again take $k \geq 1$.} If any one of $L_4, \cdots, L_7$ is applied to \red{some term $\nu \lambda^p$ in $L^j \z_1$ with $0 \leq p \leq 2m - 1$ and $1 \leq j \leq k-1$, Proposition \ref{ZA} again tells us that \eqref{eq:L123} applies for $1 \leq i \leq 7$; the same order counting also shows that if \eqref{eq:L123} is an equality, then any term in $\Lambda_q[L_i \nu]$ with $4 \leq i \leq 7$ has at least \textit{two} differentiated factors of $\ka$, i.e., it is $(2m-2,2)$-harmless. Therefore, we have the following effects of type-$3$ words, which contribute} terms of the form $\zeta^{k + 1/2} L_j L_8^{k-1}$, with $4 \leq j \leq 7$. As $\zeta = O(\lambda)$ and each $L_i$ with $4 \leq i \leq 7$ has at least two nontrivial factors (\red{i.e., having} $\zeta$-index \red{at least} two and/or an $s$-derivative on $\zeta_1$), the term in the expansion of $\zeta^{k+1/2}$ must be $\zeta_1^{k + 1/2}$; \red{otherwise its contribution would be $(2m-2,3)$-harmless}. Recall the formula for $L = X_\zeta^2$ in Definition \ref{L}. If $k \geq 1$, then we look for terms in $\Lambda_{M_2}[L_i L_8^{k-1} \z_1]$ with $M_2 = 2m - 2k - 2$ so that $2k + 1 + M_2 = 2m-1$:
\begin{align*}
	\Lambda_{M_2}\left[L_4 L_8^{k-1} \z_1\right] =& - \sum_{p + q = m - k + 1} \dot{\zeta}_p \zeta_1 \zeta_1^{2(k-1)} q(q-1) \frac{d^{2k - 1} \zeta_q}{d s^{2k - 1}} + \mathcal{H}_{2m-2, 3}\\
	\Lambda_{M_2}\left[L_5 L_8^{k-1} \z_1\right]  =& - \sum_{p + q = m - k + 1} \zeta_1 \ddot{\zeta}_p \zeta_1^{2(k-1)} q(q-1) \frac{d^{2k - 2} \zeta_q}{d s^{2k - 2}} + \mathcal{H}_{2m-2, 3}\\
	\Lambda_{M_2}\left[L_6 L_8^{k-1} \z_1\right] =& \Lambda_{M_2} [L_4 L_8^{k-1} \z_1]\\
	=& - \sum_{p + q = m - k + 1} \dot{\zeta}_p \zeta_1 \zeta_1^{2(k-1)} q(q-1) \frac{d^{2k - 1} \zeta_q}{d s^{2k - 1}} + \mathcal{H}_{2m-2, 3}\\
	\Lambda_{M_2}\left[L_7 L_8^{k-1} \z_1 \right] =& \sum_{p + q = m - k + 1} \zeta_1  p \dot{\zeta}_p \zeta_1^{2(k-1)} q  \frac{d^{2k - 1} \zeta_q}{d s^{2k - 1}} + \mathcal{H}_{2m-2, 3}.
\end{align*}
\red{Setting $p = r + 1$, $q = m - k - r$ and reindexing,} the last two terms then combine to give
\begin{align*}
	L_6 L_8^{k-1} \z_1& + L_7 L_8^{k-1} \z_1 \\
	=&  - \sum_{p + q = m - k + 1}  q(q-1)\dot{\zeta}_p \zeta_1^{2k-1} \frac{d^{2k - 1} \zeta_q}{d s^{2k - 1}}
	\\
	&+ \sum_{p + q = m - k + 1}  p q \zeta_1^{2k - 1} \dot{\zeta}_p  \frac{d^{2k - 1} \zeta_q}{d s^{2k - 1}} + \mathcal{H}_{2m - 2, 3}\\
	=& \sum_{r = 0}^{m - k - 1} (m - k - r) (r + 1 - (m-k - r - 1)) \zeta_1^{2k - 1} \dot{\zeta}_{r + 1} \frac{\d^{2k - 1} \zeta_{m - k - r}}{d s^{2k - 1}}\\
	&+ \mathcal{H}_{2m - 2, 3}.
\end{align*}
\\
\textbf{Type 4 words.} \red{Type-$4$ words generate terms of the form
\begin{align*}
	\Y_3 L_8 \Y_2 L_j \Y_1,
\end{align*}
for some differential polynomials $\Y_1, \Y_2, \Y_3$ in the jet of $\zeta$.} Any incidence of $L_8$ to the \textit{operator left} of one of $L_4, \cdots, L_7$ will generate a term which is cohomologous to a \red{$(2m-2,3)$-harmless} remainder. More precisely, denote by $[\mathcal{Y}ds]_{\text{dR}}$ the de Rham cohomology class of a one-form $\mathcal{Y}ds$, where $\mathcal{Y}$ is a monomial in the jet of $\zeta^{1/2}$ and $ds$ is the arclength measure on $\d \Omega$. If $4 \leq j \leq 7$, then for any such \red{$\mathcal{Y}_1, \Y_2, \Y_3$, }we have
\begin{align*}
	\left[\red{\Lambda_{2m - 2}} \left[\Y_3 L_8 \Y_2 L_j \Y_1\right] \right]_{\text{dR}}
	=&
	\left[ \Lambda_{2m - 2}\left[ \red{\Y_3} \sum_{p, q} pq \zeta_p \lambda^{p - 1} \zeta_q \lambda^{q - 1} \left(\frac{d}{d s}\right)^2 \Y_2 L_j \Y_1 ds \right] \right]_{\text{dR}} 
	\\
	=& 
	- \left[ \Lambda_{2m - 2} \left[\left(\frac{d}{ds} \red{\Y_3} \sum_{p, q} \red{\Y_2} p q \zeta_p \lambda^{p - 1} \zeta_q \lambda^{q - 1} \right) \frac{d}{ds} L_j \Y_1 ds\right] \right]_{\text{dR}},
\end{align*}
where the latter contains at least one of $\dot{\Y}_2, \dot{\zeta}_p$ or $\dot{\zeta}_q$ in the leftmost factor and the rightmost is again $(2m-2, 2)$-harmless. By Theorem \ref{Linear coeff} \red{and the fact that differential degree is invariant under ``differentiation by parts'' (cf. Section \ref{subsec: cohomological considerations and curvature polynomials})}, this amounts to \red{a $(2m-2,3)$-harmless term} which can be absorbed into the remainder \red{in Theorem \ref{main}.}
\\
\\
\textbf{Type 5 words.} Any composition with at least two factors of $L_4, L_5, L_6$ or $L_7$ \red{has at least two differentiated factors of $\ka$ in any monomial of maximal differential degree and is hence, clearly $(2m-2,4)$-harmless. These terms} can again be discarded into the remainder.
\\
\\
\textbf{Type 6 words.} \red{We now consider type-$6$ words, which generate} terms of the form $\zeta^{(2k+1)/2} L_8^k \z_1$. There are two subtypes: (1) either all $\zeta$-indices in the expansion of $\zeta^{(2k + 1)/2}$ are equal to one or (2) there are some indices strictly greater than one. In the first case, \red{its contribution to $\Upsilon_{2m - 1}$ is of the form}
\begin{align*}
	\sum_{k = 1}^{m-1} \frac{1}{(2k + 1)!}\zeta_1^{k + 1/2} \Lambda_{2m - 2k - 2} \left[\left(\frac{\d \zeta}{\d \lambda}\right)^2 \frac{\d^2}{\d s^2}  L_8^{k-1} \z_1\right].
\end{align*}
In the second, all derivatives in $L_{8,1,1}$ must land on $\z_1$:
\begin{align*}
	\sum_{k = 1}^{m-2} \sum_{q = 1}^{m - k -1} \frac{1}{(2k + 1)!} \frac{2k + 1}{2} (m- q - k) \zeta_1^{3k - 1/2} \zeta_{q + 1}   \zeta_{m - q - k}^{(2k)}.
\end{align*}
\red{Both types will be strategically integrated by parts in Section \ref{subsec: integration by parts} below.} Inserting the formulas from Cases $1, 3$, and $6$ into $\zeta_m$ in Proposition \ref{intinv} gives
\begin{align}
	\label{eq:AMUpsilonM}
	\begin{split}
		m!& \zeta_1^{-m-1}  \zeta_{m}
		=  \frac{2 \, m!}{2m + 1} \zeta_1^{-1/2} \zeta_1^{-m - 1} \left(A_{2m- 1} - \Upsilon_{2m - 1}\right)
		\\
		&= \frac{2 \, m!}{2m + 1} \zeta_1^{-m - 3/2} \bigg(A_{2m- 1} - \underbrace{\zeta_1^{-1/2} \sum_{i = 1}^{m - 2} \left(\frac{(m-i)}{2} - \frac{1}{8}\right) \zeta_{i + 1} \zeta_{m - i}}_{k = 0}
		\\
		& - \underbrace{\sum_{k = 1}^{m-2} \sum_{q = 1}^{m - k -1} \frac{1}{(2k + 1)!} \frac{2k + 1}{2}  \zeta_1^{3k - 1/2} \zeta_{q + 1}  (m- q - k) \zeta_{m - q - k}^{(2k)} }_{\text{nontrivial index in}\,\, \zeta^{(2k+1)/2} \,\, \text{preceding}\,\, L_8^k \z_1}
		\\
		& -\underbrace{ \sum_{k = 1}^{m-1} \frac{1}{(2k + 1)!}\zeta_1^{k + 1/2} \Lambda_{2m - 2k - 2} \left[\left(\frac{\d \zeta}{\d \lambda}\right)^2 \frac{\d^2}{\d s^2}  L_8^{k-1} \z_1\right]}_{\zeta_1^{(2k+1)/2}L_8^k \z_1}
		\\
		& + \underbrace{\sum_{k = 1}^{m - 1}\sum_{p =1}^{m- k} \frac{1}{(2k+ 1)!}\dot{\zeta}_p \zeta_1^{3k - 1/2}  (m + 1 - k - p)(m - k - p) \frac{d^{2k - 1} \zeta_{m-k - p + 1}}{d s^{2k - 1}}}_{L_4 L_8^{k-1} \z_1}
		\\
		&+ \underbrace{\sum_{k = 1}^{m - 1} \sum_{p =1}^{m- k}  \frac{1}{(2k+ 1)!}\zeta_1^{3k - 1/2} \ddot{\zeta}_p (m + 1 - k - p)(m - k - p) \frac{d^{2k - 2} \zeta_{m-k - p + 1}}{d s^{2k - 2}}}_{L_5 L_8^{k-1} \z_1}\\
		&- \underbrace{\sum_{k = 1}^{m-1} \sum_{r = 0}^{m - k - 1} \frac{1}{(2k + 1)!} (m - k - r) (2r + k - m + 2) \zeta_1^{3k - 1/2}\dot{\zeta}_{r + 1} \frac{\d^{2k - 1} \zeta_{m - k - r}}{d s^{2k - 1}}}_{\zeta^{(2k+1)/2}(L_6 + L_7) L_8^{k-1} \z_1}
		\bigg)
		\\ &+  \red{\Hm},
	\end{split}
\end{align}
\red{where $\Hm$ denotes $(2m-2,3)$-harmless terms} in the sense of Definition \ref{def:harmless}. The two additional terms in \red{$\Theta_m$ which come from} Proposition \ref{intinv} are
\begin{align*}
	(m-1)! \zeta_1^{-m-2} \sum_{j =1}^{m-2} (j + 1) (m - j)\zeta_{j+1} \zeta_{m - j}
\end{align*}
and
\begin{align*}
	 \sum_{\ell = 1}^{m-2}\sum_{i = 0}^{\ell - 1} \zeta_1^{-m-2}  \frac{(m - 1 - \ell +i )! \ell!}{i!} (m-\ell) (\ell+1) \zeta_{m - \ell} \zeta_{\ell + 1}.
\end{align*}

\red{By Corollary \ref{cor:harmlessremainders}, after substituting the curvature jet expressions for $\zeta_i$ from Theorem \ref{Linear coeff}, every monomial with $\deg_{\d, \zeta} + w_\zeta \leq 2m - 2$ has $\ka$-differential degree at most $2m-2$; if equality holds and the monomial contains at least three nontrivial $\zeta$-indices, then it is $(2m-2,3)$-harmless. Since $\zeta_1$ contains only an undifferentiated power of $\kappa$, multiplication by a power of $\zeta_1$ affects neither the differential degree nor the harmlessness of any terms.} We are now prepared to \red{begin} proving Theorem \ref{main}.

\subsection{Integration by parts}
\label{subsec: integration by parts}
Let us integrate each term above separately, \red{following the algorithm described in Section \ref{subsec: cohomological considerations and curvature polynomials}, to reduce each of the linear terms in Section \ref{sec: linear terms with maximal derivatives} and second order terms in Section \ref{sec: quadratic terms of maximal differential degree} to an integral in which $\ka_{m-1}$ appears \textit{quadratically}, as in the statement of Theorem \ref{main}}. Define the integrals
\begin{align*}
	J_1 = &    \int_0^\ell \zeta_1^{-m -3/2} A_{2m - 1}^\text{linear} ds = - \frac{1}{(2m)!} \int_0^\ell  2^{2m - 2} \ka^{-4m/3 + 1} \ka_{2m - 2} ds,\\
	J_2 = &  \int_0^\ell \zeta_1^{-m -3/2} A_{2m - 1}^{\red{(2)}} ds
	= 2^{2m - 1} \sum_{p = 2}^{2m - 2} \int_0^\ell \ka^{-4m/3} p \frac{\ka_{p-1} \ka_{2m - p - 1} }{(p+1)! (2m - p + 1)!} ds,\\
	J_3 = & \sum_{i = 1}^{m-2} \int_0^\ell \zeta_1^{-m -2}  \left(\frac{(m-i)}{2} - \frac{1}{8}\right) \zeta_{i+1}\zeta_{m-i} ds,\\
	 J_4 = &   \sum_{k = 1}^{m-2} \sum_{q = 1}^{m - k -1} \frac{1}{(2k + 1)!} \frac{2k + 1}{2} \int_0^\ell \zeta_1^{3k - m - 2} \zeta_{q + 1}  (m- q - k) \zeta_{m - q - k}^{(2k)} ds,\\
	 J_5 = &    \sum_{k = 1}^{m-1} \int_0^\ell \frac{1}{(2k + 1)!} \Lambda_{2m - 2k - 2}\left[\zeta_1^{k - m - 1}  \left(\frac{\d \zeta}{\d \lambda}\right)^2 \frac{\d^2}{\d s^2}  L_8^{k-1} \z_1 \right] ds,\\
 J_6 = &  \sum_{k = 1}^{m - 1}\sum_{p =1}^{m- k} \frac{(m + 1 - k - p)(m - k - p)}{(2k+ 1)!} \int_0^\ell \dot{\zeta}_p  \zeta_1^{3k - m - 2}  \frac{d^{2k - 1} \zeta_{m- k - p + 1}}{d s^{2k - 1}} ds,\\
J_7 =& \sum_{k = 1}^{m - 1} \sum_{p =1}^{m- k}  \frac{(m + 1 - k - p)(m - k - p)}{(2k+ 1)!} \int_0^\ell \ddot{\zeta}_p \zeta_1^{3k - m - 2} \frac{d^{2k - 2} \zeta_{m-k - p + 1}}{d s^{2k - 2}} ds,\\
	 J_8 = & \sum_{k = 1}^{m-1} \sum_{r = 0}^{m - k - 1} \frac{(m - k - r) (2r - m + k + 2)}{(2k + 1)!}\\
	 & \times \int_0^\ell  \zeta_1^{3k - m - 2}\dot{\zeta}_{r + 1} \frac{\d^{2k - 1} \zeta_{m - k - r}}{d s^{2k - 1}}ds,\\
	 J_9 =&  (m-1)! \sum_{r =1}^{m-2} (r + 1) (m - r) \int_0^\ell \zeta_1^{-m-2} \zeta_{r+1} \zeta_{m - r}ds,\\
	 J_{10} = & \sum_{r = 1}^{m-2}\sum_{i = 0}^{r - 1} \frac{(m - 1 - r +i )! r!}{i!} (m-r) (r+1) \int_0^\ell \zeta_1^{-m-2}  \zeta_{m - r} \zeta_{r + 1}ds.
\end{align*}
\red{The first two integrals $J_1$ and $J_2$ come from the term $A_{2m-1}$ in Proposition \ref{AM} while the subsequent six integrals $J_3, \cdots, J_8$ come from \eqref{eq:AMUpsilonM}. The final two, $J_9$ and $J_{10}$, come from} Proposition \ref{intinv}, which then reads
\begin{align*}
	\I_m =& \int_0^\ell \Theta_m[\ka] ds\\
	=& \frac{2 \,  m!}{2m + 1}(J_1 + J_2 - J_3 - J_4 - J_5 + J_6 + J_7 - J_8) -J_9 - J_{10} 
	\\
	&+ \int_0^\ell \mathcal{H}_{2m- 2, 3} ds,
\end{align*}
\red{where $\int_0^\ell \mathcal{H}_{2m - 2, 3}$ now denotes \textit{integrals} of $(2m-2,3)$-harmless terms in the sense of Definition \ref{def:harmless}; by Corollary \ref{cor:harmlessremainders}, they can be absorbed into the remainder $\mathcal{R}_m$ in Theorem \ref{main}.} In each integral, we can integrate by parts keeping only the top order terms. Let $a \pm b$ be even, as will be the case for the integrals $J_i$. Then,
\begin{align*}
	\int_0^\ell \zeta_1^p \frac{d^a \zeta_i}{d s^a} \frac{d^b \zeta_j}{d s^b} ds = (-1)^{i - j + \frac{a - b}{2}} \int_0^\ell 2^p \ka^{-2p/3} F_i F_j \ka_{i + j + \frac{a + b}{2} - 2}^2 ds + \int_0^\ell \mathcal{H}_{(2i+2j + a + b - 4,3)} ds.
\end{align*}
\red{Recall our convention \eqref{eq:msfzk}, where we denote by $\msfz_m = \zr(2m)$ the Riemann $\zeta$-function at even integers $2m$. The integrals $J_i$ can be simplified as follows:}
\\
\\
\textbf{Integral $J_1$.}
\begin{align*}
	J_1 = (-1)^m \frac{ \left(- \frac{4m}{3} + 1\right) 2^{2m - 2}}{(2m)!} \int_0^\ell \ka^{-4m/3} \ka_{m-1}^2 ds + \int_0^\ell \mathcal{H}_{2m-2,3} ds.
\end{align*}
\\
\textbf{Integral $J_2$.}
\begin{align*}
	J_2 =& 2^{2m - 1} \sum_{p = 2}^{2m - 2} \int_0^\ell \ka^{-4m/3} p \frac{\ka_{p-1} \ka_{2m - p - 1} }{(p+1)! (2m - p + 1)!} ds\\
	=& 2^{2m - 1} \sum_{p = 2}^{2m - 2} (-1)^{m-p}   \frac{p}{(p+1)! (2m - p + 1)!} \int_0^\ell \ka^{-4m/3} \ka_{m-1}^2 ds + \int_0^\ell \Hm ds.
\end{align*}
\\
\textbf{Integral $J_3$.} By Theorem \ref{Linear coeff}, the integrand of $J_3$ contains $F_{i+1} F_{m - i} \ka_{2i} \ka_{2m - 2i - 2}$. At the expense of lower order terms, we can integrate by parts $|(2m - 2i - 2 - 2i)| / 2 = |m -2i - 1|$ times to obtain
\begin{align*}
	J_3 =& \sum_{r = 1}^{m-2} \frac{4m - 4r - 1}{8} \frac{2^{2m + 5} }{(2 \pi)^{2m + 2}} \msfz_{r+1} \msfz_{m - r} \int_0^\ell \ka^{-4m / 3} \ka_{m-1}^2 ds + \int_0^\ell \Hm ds.
\end{align*}
\\
\textbf{Integrals $J_6$ and $J_7$.}
Notice that \red{modulo $(2m-2, 3)$ harmless terms,} the integrands of $J_6$ and $J_7$ \red{cancel with one another} as cohomology classes:
\red{
	\begin{align*}
		J_6 = &  \sum_{k = 1}^{m - 1}\sum_{p =1}^{m- k} \frac{(m + 1 - k - p)(m - k - p)}{(2k+ 1)!} \int_0^\ell \dot{\zeta}_p  \zeta_1^{3k - m - 2}  \frac{d^{2k - 1} \zeta_{m-k - p+1}}{d s^{2k - 1}} ds
		\\
		= &
		- \sum_{k = 1}^{m - 1}\sum_{p =1}^{m- k} \frac{(m + 1 - k - p)(m - k - p)}{(2k+ 1)!} \int_0^\ell \frac{d}{d s} \left(\dot{\zeta}_p  \zeta_1^{3k - m - 2} \right) \frac{d^{2k - 2} \zeta_{m-k - p+1}}{d s^{2k - 2}} ds
		\\
		=&
		- \sum_{k = 1}^{m - 1} \sum_{p =1}^{m- k}  \frac{(m + 1 - k - p)(m - k - p)}{(2k+ 1)!} \int_0^\ell \ddot{\zeta}_p \zeta_1^{3k - m - 2} \frac{d^{2k - 2} \zeta_{m-k - p+1}}{d s^{2k - 2}} ds + \int_0^\ell  \Hm ds
		\\
		= &
		- J_7 + \int_0^\ell \Hm ds.
	\end{align*}
}
\red{Here, we have integrated by parts,} moving one derivative off of ${\zeta}_q^{2k - 1}$ in $L_4 L_8^{k-1} \z_1$ and putting it onto $\dot{\zeta}_p$. \red{If the derivative were to land on $\zeta_1^{3k - m - 2}$ instead, the differential degree would remain unchanged and the resulting term would be $(2m - 2,3)$-harmless.} Thus,
$$
J_6 + J_7 = \int_0^\ell \Hm ds,
$$
and these terms can be absorbed into the remainder in Theorem \ref{main}
\\
\\
\textbf{Integrals $J_4, J_5$ and $J_8$.}
$J_4$ can be integrated by parts to be put in the form of $J_8$. \red{Adding and subtracting the term $q = 0$ and changing $q$ to $r$ to match the indices of summation with those in $J_8$, we obtain}
\begin{align*}
	J_4 =& - \sum_{k = 1}^{m-1} \sum_{r = 0}^{m - k - 1} \frac{1}{(2k + 1)!} \left(\frac{2k + 1}{2}\right) (m - k - r) \int_0^\ell \zeta_1^{3k - m - 2} \dot{\zeta}_{r+1} \frac{d^{2k - 1} \zeta_{m - k - r}}{d s^{2k - 1}} ds\\
	&+   \sum_{k = 1}^{m-1} \frac{1}{(2k + 1)!} \left(\frac{2k + 1}{2}\right) (m - k) \int_0^\ell \zeta_1^{3k - m - 2} \dot{\zeta}_{1} \frac{d^{2k - 1} \zeta_{m - k}}{d s^{2k - 1}} ds + \int_0^\ell  \Hm ds,
\end{align*}
\red{where the terms in which $\frac{d}{d s}$ lands on $\zeta_1^{3k-m-2}$ when integrating by parts are absorbed in the remainder.} Integrating by parts once in $J_5$, we obtain
\begin{align*}
	J_5 =& - \sum_{k = 1}^{m-1} \frac{(k - m - 1)(m-k)}{(2k+1)!} \int_0^\ell \zeta_1^{3k - m - 2}  \dot{\zeta_1} \frac{d^{2k - 1} \zeta_{m-k}}{d s^{2k- 1}} ds\\
	&- \sum_{k = 1}^{m-1} \sum_{r = 0}^{m - k  - 1} \frac{2 (r+1) (m- k - r)}{(2k + 1)!} \int_0^\ell \zeta_1^{3k - m - 2} \dot{\zeta}_{r+1} \frac{d^{2k - 1} \zeta_{m - k - r}}{d s^{2k - 1}} ds + \int_0^\ell  \Hm ds.
\end{align*}
Combining with $J_8$ and simplifying, we get
\begin{align*}
	J _4 +J_5 + J_8 =&   \sum_{k = 1}^{m-1} \frac{(m+3/2)(m-k)}{(2k+1)!} \int_0^\ell \zeta_1^{3k - m - 2}  \dot{\zeta_1} \frac{d^{2k - 1} \zeta_{m-k}}{d s^{2k- 1}} ds\\
	&- \sum_{k= 1}^{m- 1} \sum_{r = 0}^{m- k - 1} \frac{(m-k - r) (m + 1/2)}{(2k+1)!} \int_0^\ell \zeta_1^{3k  - m - 2} \dot{\zeta}_{r+1} \frac{d^{2k - 1} \zeta_{m - k - r}}{d s^{2k -1}} ds
	\\
	& + \int_0^\ell  \Hm ds.
\end{align*}
In terms of curvature, we have
\begin{align*}
	J_4 + J_5 + J_8 =&  \sum_{k = 1}^{m-1} (-1)^{k+1} \frac{(m+3/2)(m-k)}{(2k+1)!}  \frac{2^{2m + 2}}{3(2\pi)^{2m - 2k}}  \msfz_{m-k} \int_0^\ell \ka^{-4m/3} \ka_{m-1}^2 ds\\
	& - \sum_{k= 1}^{m- 1} \sum_{r = 0}^{m- k - 1} (-1)^{k + 1} \frac{(m-k - r) (m + 1/2)}{(2k+1)!} \frac{2^{2m + 5}}{(2\pi)^{2(m-k + 1)}}\\
	& \qquad \times \msfz_{r+1} \msfz_{m - k - r} \int_0^\ell \ka^{-4m/3} \ka_{m-1}^2 ds + \int_0^\ell  \Hm ds.
\end{align*}
\textbf{Integrals $J_9$ and $J_{10}$.} Integrating by parts modulo remainders as above, we find that
\begin{align*}
	J_9 =& (m-1)! \sum_{r=1}^{m-2} (r+1) (m-r) \frac{2^{2m+5}}{(2\pi)^{2m + 2}}
	\msfz_{m-r} \msfz_{r+1} \int_0^\ell \ka^{-4m/3} \ka_{m-1}^2 ds + \int_0^\ell  \Hm ds
	\\
	J_{10} =& \sum_{r = 1}^{m-2} \sum_{i = 0}^{r-1} \frac{(m - 1 - r + i) ! r!}{i!} (m-r)(r+1) \frac{2^{2m+5}}{(2\pi)^{2m + 2}}
	\msfz_{m-r} \msfz_{r+1}  \int_0^\ell \ka^{-4m/3} \ka_{m-1}^2 ds
	\\
	& + \int_0^\ell  \Hm ds.
\end{align*}

\subsection{The leading order coefficient}\label{subsec: nonvanishing of the leading order coefficient}

\noindent Note that in each $J_i$ above, the integrand is simplified to $\ka^{-4m/3} \ka_{m-1}^2$, which matches perfectly with the structure of $\I_1, \I_2, \I_3$ and $\I_4$ observed in \cite{Sorr15}. In each of the integrals above, we can now factor out the term
\begin{align*}
	J_0 = \int_0^\ell \ka^{-4m/3} \ka_{m-1}^2 ds.
\end{align*}
\red{We aim to show that the leading order coefficient $c_m$ in Theorem \ref{main} is nonvanishing. To do so, we will} separate the coefficients of $J_i$ ($1 \leq i \leq 10$) into single and double sums, \red{which we denote by $S(m)$ and $D(m)$ respectively}. Those coming from $J_1, J_2, -J_3,$ the first sum in $-J_4 - J_5 - J_8$ and $-J_9$ contribute
\begin{align*}
	S(m) =& \frac{2 \, m!}{2m+1} \Bigg( (-1)^m \frac{ \left(- \frac{4m}{3} + 1\right) 2^{2m - 2}}{(2m)!}
	\\
	& + 
	\sum_{p = 2}^{2m - 2} (-1)^{m-p}  2^{2m - 1} \frac{p}{(p+1)! (2m - p + 1)!}
	\\
	&-
	\sum_{r = 1}^{m-2} \frac{4m - 4r - 1}{8} \frac{2^{2m + 5} }{(2 \pi)^{2m + 2}} \msfz_{r+1} \msfz_{m-r}
	\\
	&-
	\sum_{k = 1}^{m-1} (-1)^{k+1} \frac{(m+3/2)(m-k)}{(2k+1)!}  \frac{2^{2m + 2}}{3(2\pi)^{2m - 2k}} \msfz_{m-k} \Bigg)
	\\
	&-
	(m-1)! \sum_{r=1}^{m-2} (r+1) (m-r) \frac{2^{2m+5}}{(2\pi)^{2m + 2}} \msfz_{m-r} \msfz_{r+1}.
\end{align*} 
Those coming from the second \red{sum in} $-J_4 - J_5 - J_8$ and $-J_{10}$ contribute
\begin{align*}
	D(m) =&  \frac{2 \, m!}{2m+1} \sum_{k= 1}^{m- 1} \sum_{r = 0}^{m- k - 1} (-1)^{k+1} \frac{(m-k - r) (m + 1/2)}{(2k+1)!} \frac{2^{2m + 5}}{(2\pi)^{2(m-k + 1)}} \msfz_{r+1} \msfz_{m - k - r}
	\\*
	&- \sum_{r = 1}^{m-2} \sum_{i = 0}^{r-1} \frac{(m - 1 - r + i) ! r!}{i!} (m-r)(r+1) \frac{2^{2m+5}}{(2\pi)^{2m + 2}} \msfz_{m-r} \msfz_{r+1} .
\end{align*}
One can easily check that $S(2) + D(2) = \frac{2}{135} = \frac{8}{540}$, which corroborates the formula for $\I_2$ in \cite{MM}; see \eqref{eq:I2} above. \red{In fact, a comparison of the coefficients in Proposition \ref{prop: Kovachev Popov coordinates and caustic length lazutkin expansion} with those on page 22 of \cite{Sorr15} shows that the coefficients $\iota_k$ in \eqref{SIK} are given by

\begin{align*}
	\iota_1 =& - \frac{1}{2},
	\qquad
	\iota_2 = \frac{1}{540},
	\qquad
	\iota_3 = \frac{1}{4200},
	\qquad
	\iota_4 = \frac{16}{135}.
\end{align*}

Taking the coefficient of the quadratic highest derivative in each $\I_k$ in \eqref{SIK}, we see that the leading order coefficients $c_m$ (in the notation of Theorem \ref{main}) are given by
\begin{align*}
	c_2 = 8 \iota_2 = \frac{8}{540},
	\qquad
	c_3 = 24 \iota_3 = \frac{24}{4200},
	\qquad
	c_4 = \frac{1}{42} \iota_4 = \frac{8}{2835}.
\end{align*}
\red{A direct substitution also shows that these coefficients agree with the formula
\begin{align} \label{eq:iotavalues}
	c_m = \frac{(2m +3)}{\pi^{2m + 2}} m! \msfz_{m+1}
\end{align}
in Theorem \ref{main}, which we now derive.}}
\\
\\
\red{Let us first simplify a few of the individual terms in $S(m) + D(m)$. First note that the inner sum over $i$ in the second double sum of $D(m)$ can be evaluated explicitly via the hockey-stick identity. Set $n = m - 1 - r$. We then have
\begin{align*}
	\sum_{i = 0}^{r - 1} \frac{(m - 1 - r + i)! r! }{i!} =& \sum_{i = 0}^{r - 1} \frac{(n + i)! r!}{i!}
	\\
	=&
	n! r! \sum_{i = 0}^{r - 1} \binom{n+i}{i}
	\\
	=&
	n! r! \binom{n + r}{r - 1}
	\\
	=&
	(m - r - 1)! r!  \binom{m  - 1}{ r- 1}
	\\
	=& \frac{r}{m - r} (m - 1)!
\end{align*}
Its contribution to the second sum in $D(m)$ is then
\begin{align*}
	- (m - 1)!  \frac{2^{2m +5}}{(2\pi)^{2m + 2}} \sum_{r = 1}^{m-2} r ( r+1) \msfz_{m-r} \msfz_{r+1}.
\end{align*}
This combines with the fifth term in $S(m)$ to give
\begin{align*}
	- m !  \frac{2^{2m +5}}{(2\pi)^{2m + 2}} \sum_{r = 1}^{m-2} (r + 1) \msfz_{m-r} \msfz_{r+1}.
\end{align*}
}

\red{
Let us now pull out a common factor from each of the terms above:
\begin{align}\label{eq:SplusD}
	S(m) + D(m) = &
	\frac{2^{2m +6} \, m!}{(2\pi)^{2m + 2} (2m+1)} \left(D_1(m) + \sum_{i = 1}^5 S_i(m) \right),
\end{align}
where
\begin{align*}
	S_1(m)
	=&
	(-1)^m (2\pi)^{2m + 2} \frac{ \left(- \frac{4m}{3} + 1\right)}{2^7 (2m)!}
	\\
	S_2(m) =& (2\pi)^{2m + 2} \sum_{p = 2}^{2m - 2} (-1)^{m-p}  \frac{p}{2^6 (p+1)! (2m - p + 1)!}
	\\
	S_3(m) = &- \sum_{r = 1}^{m-2} \frac{4m - 4r - 1}{8} \msfz_{r+1} \msfz_{m-r}
	\\
	S_4(m) =& \frac{(2\pi)^2}{24} \left(m + \frac{3}{2}\right) \sum_{k = 1}^{m-1} (-1)^{k} \frac{(m-k)}{(2k+1)!}  (2\pi)^{2k} \msfz_{m-k}
	\\
	S_5(m) = & - \frac{2m + 1}{2} \sum_{r = 1}^{m-2} (r + 1) \msfz_{m-r} \msfz_{r+1}
	\\
	D_1 = & (m + 1/2) \sum_{k= 1}^{m- 1} \sum_{r = 0}^{m- k - 1} (-1)^{k+1} \frac{(m-k - r) }{(2k+1)!} (2\pi)^{2k} \msfz_{r+1} \msfz_{m-k - r}.
\end{align*}
}
Our goal is to find a formula for the leading order coefficients $c_m$, $m \in \N$ in Theorem \ref{main}, which arise from the sum $S(m) + D(m)$. To evaluate certain terms in \eqref{eq:SplusD} explicitly, we will make use of convolution formulas from analytic number theory. \red{We present a convenient} formula for weighted sums of Bernoulli numbers \red{(equivalently, the Riemann $\zeta$-function)}.
\begin{lemm}\label{zetaidentity}
	For any $n \geq 2$.
	\begin{align*}
		\sum_{r = 1}^{n-1} r \msfz_{n-r} \msfz_r  = \left(\frac{n}{2}\right)\left(n+ \frac{1}{2} \right) \msfz_n.
	\end{align*}
\end{lemm}

\begin{proof}
	The following well known formula, originally proved for products of binomial coefficients and pairs of Bernoulli numbers, is due to Euler:
	\begin{align}\label{Euler}
		\sum_{r = 1}^{n-1} \msfz_{n-r} \msfz_r  = \left(n + \frac{1}{2}\right) \msfz_n,
	\end{align}
	\red{(see \cite{WilliamsZeta}, Theorem 1; also \cite{ApostolZeta}).} We follow the algorithm described in \cite{DavisSitaramachandra} for computing the first moment of such sequences via generalized convolutions. Let $f,g: 2 \Z \to \C$ be sequences on the even integers and define the convolution operator $*_{n}$ by
	\begin{align*}
		f \ast_{n} g(2n) = \sum_{r = 1}^{n-1} f(2n - 2r) g(2r).
	\end{align*}
	For each sequence $f: 2 \Z \to \C$, let $\overline{f}(2n) = (2n - 1) f(2n)$. It follows that
	\begin{align}
		\overline{f \ast_{n} g}(2n) = \overline{f} \ast_n g (2n) + f \ast_n \overline{g}(2n) + f \ast_n g(2n).
	\end{align}
	Formula \eqref{Euler} can then be written as $\zr \ast_n \zr (2n) = (n + \frac{1}{2}) \zr(2n)$. Choosing $g = \overline{f}$ gives $f \ast_n \overline{f}(2n) = (n - 1) f \ast_n f(2n)$ and setting $f(2r) = \zr(2r)$, we have
	\begin{align*}
		\sum_{r =  1}^{n-1} r \msfz_r \msfz_{n-r}  =& \frac{1}{2} \left(\zr \ast_n \overline{\zr} + \zr \ast_n \zr \right)\\
		=& \frac{1}{2} \left((n-1) \zr \ast_n \zr + \zr \ast_n \zr\right)\\
		=& \left(\frac{n}{2}\right) \left(n+ \frac{1}{2}\right) \msfz_n,
	\end{align*}
	which proves the lemma.
\end{proof}

\red{Applying Lemma \ref{zetaidentity} to the individual terms in $S(m)$ and $D(m)$ and adding and subtracting boundary terms as necessary, we obtain the following simplified expressions for $S_3(m), S_5(m)$ and $D_1(m)$:}

\begin{coro}\label{SThree}
	For $m \geq 2$, we have
	\begin{align*}
		S_3(m)
		= &
		- \left(\frac{4m^2 + 8 m + 3}{16}\right) \msfz_{m + 1} + \left(\frac{2m + 1}{4}\right) \msfz_1 \msfz_m.
	\end{align*}
	and
	\begin{align*}
		S_5(m) =& \red{- \frac{(2m + 1)(m+1) (2m+3)}{8} \msfz_{m + 1}}
		+
		\red{\frac{(2m + 1) (m+1)}{2} \msfz_1 \msfz_m.}
	\end{align*}
	In addition, for $m \geq 2$, the inner sum in $D_1$ simplifies to give
	\begin{align*}
		D_1(m) =& (m+1/2) \sum_{k = 1}^{m-1} \frac{(-1)^{k+1}}{2}\frac{(2\pi)^{2k}}{(2k + 1)!}\left(m - k + 1\right)
		\\
		& \times \left(m - k + 3/2\right) \msfz_{m-k + 1}.
	\end{align*}
\end{coro}

\begin{lemm}
	\red{For $m \geq 2$, $S_2(m)$ is given by
	\begin{align*}
		S_2(m) = (-1)^m\frac{(2\pi)^{2m+2}}{2^{6}} \frac{4m^3+2m^2}{(2m+2)!}.
		\end{align*}
	}
\end{lemm}

\begin{proof}
	We assemble the coefficients into a generating function. $S_2$ becomes the $2m + 2$-coefficient of
	\begin{align*}
		S_{2}(x; m) :=& (-1)^m \frac{(2\pi)^{2m + 2}}{2^6} \left(1 + x\right)
		\\
		=&
		(-1)^m \frac{(2\pi)^{2m + 2}}{2^6}  \left(x e^{-x} + e^{-x}\right) e^x
		\\
		=&
		(-1)^m \frac{(2\pi)^{2m + 2}}{2^6} \left(\sum_{q = 0}^{\infty} (-1)^{q+1} \frac{ q-1}{q!} x^q \right)\left(\sum_{r = 0}^{\infty} \frac{x^r}{r!} \right)
		\\
		=&
		\red{(-1)^m \frac{(2\pi)^{2m + 2}}{2^6} \sum_{n = 0}^\infty \left(\sum_{p = -1}^{n-1} (-1)^{p} \frac{p}{(p+1)!(n - p - 1)!} \right) x^n}.
	\end{align*}
	
	\red{On the one hand, since $S_2(x;m)$ is affine as a function of $x$, the coefficient of $x^{2m + 2}$ in $S_2(x;m)$ is $0$ for $m \geq 0$. On the other hand, we see that it can be written in terms of $S_2(m)$:
	\begin{align*}
		0 =& (-1)^m \frac{(2\pi)^{2m + 2}}{2^6} \sum_{p = -1}^{2m+1} (-1)^{p} \frac{p}{(p+1)! (2m - p +1 )!}
		\\
		=& S_2(m) + (-1)^m \frac{(2\pi)^{2m + 2}}{2^6} \left(\sum_{p = -1}^1 + \sum_{p = 2m - 1}^{2m+1} \right) (-1)^{p} \frac{p}{(p+1)! (2m - p +1 )!} 
		\\
		=&S_2(m) + (-1)^m \frac{(2\pi)^{2m + 2}}{2^6} \left( \frac{1}{(2m+2)!} - \frac{1}{2! (2m)!} - \frac{2m - 1}{(2m)! 2!} + \frac{2m}{(2m+1)! } - \frac{2m+1}{(2m + 2)!}\right)
		\\
		=& S_2(m) + (-1)^m \frac{(2\pi)^{2m + 2}}{2^6} \frac{1}{(2m + 2)!} \left(-2m - \frac{2m ( 2m + 1) (2m+2)}{2} + 2m (2m+2)\right)
		\\
		=& S_2(m) + (-1)^m \frac{(2\pi)^{2m + 2}}{2^6} \frac{(-4m^3 - 2m^2)}{(2m+2)!},
	\end{align*}
	from which the lemma follows.
	}
\end{proof}

We now move on to evaluate the terms $S_1(m), S_4(m)$ and $D_1(m)$. To do so, we need another combinatorial formula involving even Riemann $\zeta$-coefficients. As in the proof of Theorem \ref{Linear coeff}, we will use generating functions. Let us introduce the notation

\red{
\begin{lemm}\label{lem:zetafactorial}
	Set
	\begin{align*}
		a_k = (-1)^k \frac{(2\pi)^{2k}}{(2k + 1)!} \qquad \text{and} \qquad e_k = (-1)^{k+1} \frac{(2\pi)^{2 k}}{8\, (2k)!}.
	\end{align*}
	For $n \geq 2$,
	\begin{align*}
		\sum_{k = 0}^{n-1} (n-k) a_k  \msfz_{n -k} = - \frac{\msfz_n}{2} + e_n
	\end{align*}
	and
	\begin{align*}
		\sum_{k = 0}^{n-1} (n - k) \left(n - k + \half \right)  a_k \msfz_{n-k} = -\left(n+\half\right) \msfz_n + e_n.
	\end{align*}
\end{lemm}

\begin{proof}
	Let us once again assemble various coefficients into generating functions:
	\begin{align*}
		A(x) :=& \sum_{k = 0}^\infty a_k x^k = \frac{\sin(2\pi \sqrt{x})}{2 \pi \sqrt{x}},
		\\
		Z(x) :=& \sum_{k = 1}^\infty \msfz_k x^k = \frac{1 - \pi \sqrt{x} \cot(\pi \sqrt{x})}{2},
	\end{align*}
	where the last identity follows from the same formulas used in the proof of Theorem \ref{Linear coeff} \url{https://dlmf.nist.gov/4.19}. Define the differential operator $N = x \frac{d}{dx}$; for any polynomial $P$, the operator $P(N)$ acts on a generating function $\sum_k b_k x^k$ by multiplying its coefficients $b_k$ by $P(k)$ (see \cite{WilfGF}, Section 2.2). A direct computation
	\begin{align*}
		A(x) N Z(x) = -\half Z(x) + \frac{1}{4} \sin^2(\pi \sqrt{x})
	\end{align*}
	and
	\begin{align*}
		A(x) \left(N^2 + \half N \right) Z(x) = - Z(x)^2 + \frac{1}{4} \sin^2(\pi \sqrt{x}).
	\end{align*}
	
	The coefficient of $x^n$ in $\frac{1}{4} \sin^2(\pi \sqrt{x})$ is precisely $e_n$, from which the first formula in the lemma follows. For the second, we again extract the same coefficient from $\frac{1}{4} \sin^2(\pi \sqrt{x})$ and then apply the Euler convolution formula \eqref{Euler} for the $x^n$ coefficient of $Z^2(x)$.

\end{proof}
}

\red{We now apply Lemma \ref{lem:zetafactorial} to $S_4$ and $D_1$. Adding and subtracting the $k = 0$ term in $S_4$, we have
\begin{align*}
	S_4(m) =& \frac{(2\pi)^2}{24} \left(m + \frac{3}{2}\right) \sum_{k = 1}^{m-1} (m - k) a_k \msfz_{m-k}
	\\
	= & \frac{(2\pi)^2}{24} \left(m + \frac{3}{2} \right) \left(- \frac{\msfz_m}{2} + e_m - m a_0 \msfz_m \right).
\end{align*}

Similarly, using the simplification of the inner sum of $D_1$ in Corollary \ref{SThree} and adding and subtracting the boundary terms $k = 0$, $k = m$, we have
\begin{align*}
	D_1(m) =& - \frac{\left(m + \half \right)}{2} \sum_{k = 1}^{m-1} a_k \left(m - k + 1\right) \left(m - k + \frac{3}{2} \right) \msfz_{m - k + 1}
	\\
	=& - \frac{\left(m + \half \right)}{2} \sum_{k = 0}^{m} a_k \left( (m+1) - k \right) \left((m + 1) - k + \frac{1}{2} \right) \msfz_{(m+1) -k}
	\\
	&+
	\frac{\left(m + \half \right)}{2} \left(m + 1\right) \left(m+ \frac{3}{2}\right) a_0 \msfz_{m+1}
	+ \frac{3}{4} \left(m+ \half \right)a_m \msfz_1
	\\
	=& \frac{\left(m + \half \right)}{2} \left( \left(m + \frac{3}{2} \right) (m + 2) \msfz_{m+1} - e_{m + 1} +  \frac{3}{2} a_m \msfz_1 \right).
\end{align*}
Adding these together, we get
\begin{align*}
	S_4 (m ) + D_1(m)
	= &
	\frac{(2m + 1) (2m + 3) (m+2)}{8} \msfz_{m+1} - \frac{(2m+3) (2m+1)}{4} \msfz_1 \msfz_m
	\\
	&+ \left(\frac{2m +3}{2} \msfz_1 e_m - \frac{2m+1}{4} e_{m+1} + \frac{3}{8} (2m+1) a_m \msfz_1 \right)
	\\
	=&
	\frac{(2m + 1) (2m + 3) (m+2)}{8} \msfz_{m+1} - \frac{(2m+3) (2m+1)}{4} \msfz_1 \msfz_m
	\\
	&+
	(-1)^{m+1} \frac{(2\pi)^{2m+2}}{(2m+2)!} \left( \frac{(2m+1) (2m^2 - m + 3)}{192}\right),
\end{align*}
where the second equality follows from simplifying using the formula $\msfz_1 = \pi^2/6$.}
\\
\\
\red{We now compare this to the contribution of $S_1 (m) + S_2(m)$. We compute
\begin{align*}
	S_1(m) +S_2(m) =& (-1)^m \frac{(2\pi)^{2m +2}}{(2m + 2)!} \left( \frac{(-4m +3)}{3 \times 2^{7} }(2m + 1) (2m+2) + \frac{4m^3 +2 m^2}{2^6}\right)
	\\
	=&
	(-1)^m
	\frac{(2\pi)^{2m + 2}}{(2m+2)!} \left(\frac{4m^3 + 5m + 3}{192}\right),
\end{align*}
which cancels exactly with the third term of $S_4(m) + D_1(m)$.}
\\
\\
\red{Similarly, if we combine $S_3$ and $S_5$, we obtain yet another partial cancellation:
\begin{align*}
	S_3(m) + S_5(m) = &  \left(- \frac{m}{4} - \frac{m}{2} - \frac{3}{16} \right) \msfz_{m+1} + \left(\frac{m}{2} + \frac{1}{4}\right) \msfz_1 \msfz_m
	\\
	&-\left(\frac{2m + 1}{2}\right) \left(\frac{m+1}{2}\right) \left(m + \frac{3}{2} \right) \msfz_{m+1} + \left(\frac{2m+1}{2}\right) \left(m + 1\right) \msfz_1 \msfz_m
	\\
	= & - \frac{(2m + 1) (2m +3)^2}{16} \msfz_{m+1} + \frac{(2m + 1) (2m +3)}{4} \msfz_1 \msfz_m
\end{align*}
(the second term in $S_3(m) + S_5(m)$ cancels with the second term in $S_4(m) + D_1(m)$). Combining all of the miraculous cancellations above, we obtain the following simplification:
	\begin{align*}
		S_1(m) + S_2(m) + S_3(m) +  S_4(m) + S_5(m) + D_1(m) = \frac{(2m + 1) (2m +3)}{16} \msfz_{m + 1},
	\end{align*}
which, together with \eqref{eq:SplusD}, concludes the proof of Theorem \ref{main}.}

\section{Open questions and future work}

Several natural extensions remain. One may find other marked length spectral invariants by studying the Taylor expansion of $\I(t)$ or $\beta(\omega)$ near the Lazutkin parameter (resp. rotation number) of a caustic other than the boundary, as was pointed out to the author by Alfonso Sorrentino. It would also be interesting to extremize higher Marvizi-Melrose invariants, extending the extremal results for $\I_1$ and $\I_2$ in \cite{MM}. Finally, we note that the methods developed here, in particular those in Section \ref{Small lambda asymptotics}, are amenable to much more general settings. For example, they also apply to symplectic, projective and outer (dual) billiards. It would be interesting to study the mean minimal action coefficients and compactness of isospectral sets in those settings as well.

\section{Acknowledgements} The author {would like to thank} the Erwin Schr\"odinger Institute in Vienna for hosting him during the summer of 2023, {during which parts of this work were carried out. The author is grateful to Alfonso Sorrentino for helpful discussions about Mather's $\beta$-function and to} {Ilya Kachkovskiy for suggesting the use of generating functions in Section \ref{Integral invariants}. The author also thanks Corentin Fierobe, Hamid Hezari, Vadim Kaloshin, Illya Koval, Rafael Ramirez-Ros, Alfonso Sorrentino and Amie Wilkinson for helpful conversations and the anonymous referee for helpful comments.}

\appendix

\section{Index of notation}

\begin{longtable}{p{0.18\textwidth} p{0.46\textwidth} p{0.28\textwidth}}
	\textbf{Symbol} & \textbf{Description} & \textbf{Location}
	\\
	\hline
	\endfirsthead
	\vspace{0.05in} & \vspace{0.05in} & \vspace{0.05in}
	\\
	 $\Omega$ & Birkhoff billiard table & Sections \ref{sec: Main Results} and \ref{Billiards}
	 \\
	 $\ka, \ka_i$ & Curvature and its $i^{\text{th}}$ derivative with respect to arclength & Theorem \ref{main}
	 \\
	 $\mathcal{B}$ & Space of all Birkhoff billiard tables & Section \ref{subsec:Length spectra}
	 \\
	 $B^* \d \Omega$ & Coball bundle of the boundary & Section \ref{Billiards}
	 \\
	 $S_{\d \Omega}^* \R^2$ & Cosphere bundle of $\R^2$ with footpoints on $\d \Omega$ & Section \ref{Billiards}
	 \\
	 $(x, \xi)$ & Point and covector on $\d \Omega$ & Section \ref{Billiards}
	 \\
	 $\widehat{\xi}_\pm$& Covectors in $S_{\d \Omega}^* \R^2$ & Section \ref{Billiards}
	 \\
	 $\phi$ & Angle of incidence of a billiard ray at the boundary & Section \ref{Billiards}
	 \\
	 $(s,\sigma)$ & Symplectic coordinates on $B^* \d \Omega$, $\sigma = \cos \phi$ & Section \ref{Billiards}
	 \\
	 $\lambda$ & Glancing coordinate $1 - \sigma$ & Section \ref{subsec: Hamiltonian formulation}
	 \\
	 $\dt_\pm$ & Forwards and backwards billiard maps & Definition \ref{def:dtpm billiard maps}
	 \\
	 $(s^{\pm}, \sigma^\pm)$ & Image of $(s,\sigma)$ under $\dt_\pm$ & Section \ref{Billiards}
	 \\
	 $\omega$ & Rotation number of a billiard orbit & Equation \ref{eq: general rotation number}
	 \\
	 type-$(p,q)$ & Winding number and period of a periodic billiard orbit & Section \ref{Billiards}
	 \\
	 $\beta$ & Mather's $\beta$-function & Definition \ref{mbf}
	 \\
	 $\alpha$ & Mather's $\alpha$-function, convex conjugate of $\beta$ & Theorem \ref{thm: caustic length lazutkin expansion}
	 \\
	 $\lsp, \lsp_{p,q}$ & Length spectrum & Definition \ref{def:LSP}, Section \ref{Connection with Laplace spectrum}
	 \\
	 $\mls_\Omega$ & Marked length spectrum of $\Omega$ & Definition \ref{def:LSP}
	 \\
	 $\mathcal{M}(\Omega)$ & Space of all Birkhoff billiard tables which are marked length isospectral to $\Omega$ & Section \ref{subsec:Length spectra}
	 \\
	 $t_{p,q}, T_{p,q}$ & Minimal and maximal lengths of type-$(p,q)$ orbits & Equation \eqref{eq:tpqTpq}
	 \\
	 $\Gamma, \Gamma_\omega, \Gamma_Q$ & A caustic (of rotation number $\omega$ or Lazutkin parameter $Q$) & Section \ref{subsec: Caustics}
	 \\
	 $Q$ & Lazutkin parameter of a caustic & Definition \ref{laz}
	 \\
	 $|\Gamma|$ & Length of a caustic & Section \ref{subsec: Caustics}
	 \\
	 $\iota_k$ & Nonzero constants in the normalization of $\I_m$ & Equations \eqref{SIK} and \eqref{eq:iotavalues}
	 \\
	 $\theta$ & Curvature coordinate, tangent angle & Section \ref{curvecoord}, Figure \ref{Billiard Table}
	 \\
	 $\zeta$ & Interpolating Hamiltonian for the billiard map & Definition \ref{def:interpolating Hamiltonian}
	 \\
	 $X_\zeta$ & Hamiltonian vector field of $\zeta$ & Equation \eqref{eq:Hamiltonian vector field}
	 \\
	 $\I(t), \I_m$ & Action integral and Marvizi-Melrose invariants & Definition \ref{action}
	 \\
	 $\RR_m$ & Remainder in $\I_m$ & Theorem \ref{main}
	 \\
	 $\mathcal{H}_{D,i}$ & $(D,i)$-harmless remainders & Definition \ref{def:harmless}
	 \\
	 $\zeta_i$ & $i^{\text{th}}$ term in $\lambda$ expansion of $\zeta$ & Equation \eqref{eq:taylor exp of zeta}
	 \\
	 $A_M(s)$ & Taylor coefficient of $s^+$ with respect to $\lambda$ & Equation \eqref{eq: splus in terms of s and lambda}
	 \\
	 $M$ & Index, usually odd; $M = 2m -1$ & Section \ref{Small lambda asymptotics}
	 \\
	 $\Theta_{m}[\zeta]$ & Integrand of $\I_m$ in terms of the jet of $\zeta$ & Proposition \ref{intinv}
	 \\
	 $\wt \zeta_i$ & Normalized coefficient $\zeta_1^{-1} \zeta_i$ & Proof of Proposition \ref{intinv}, Equation \eqref{eq:bm}
	 \\
	 $L, L_i$ & $L = X_\zeta^2$, $L_i$ are individual components & Definition \ref{L}
	 \\
	 $\z_K$ & $\z_K = \frac{(-1)^K}{K!} X_\zeta^K s$, a term in the Lie series expansion of $s^+$ with respect to $\lambda$ & Definition \ref{Zk}
	 \\
	 $K$ & Index, usually odd; $K = 2k+1$ & Definition \ref{Zk}, Section \ref{sec: computing AM Algebraically}
	 \\
	 $\bm{\sigma}$ & A $k$-letter word on the letters $\{1,2,3,4,5,6,7,8\}$ & Section \ref{sec: computing AM Algebraically}, Section \ref{sec: quadratic terms of maximal differential degree}
	 \\
	 $L_{\bm{\sigma}}$ & Composition of operators corresponding to $\bm{\sigma}$ & Equation \eqref{eq:wordcompositions}, Section \ref{sec: quadratic terms of maximal differential degree}
	 \\
	 $\Lambda_M$ & Extraction operator for $\lambda^{M/2}$ coefficient & Definition \ref{notationdef}
	 \\
	 $\RR_M^A, \mathcal{R}_M^\zeta, \mathcal{R}_M^c$ & Remainders & Section \ref{Small lambda asymptotics}, Section \ref{Integral invariants}
	 \\
	 $w_\zeta$ & $\zeta$-weight & Definition \ref{def:zeta weight}
	 \\
	 $\deg_{\d, \kappa}, \deg_{\d, \zeta}$ & Differential degree with respect to $\ka$ or $\zeta$ & Definition \ref{diffdeg}, Definition \ref{def: multi-fold diff degree}
	 \\
	 $\upsilon_M$ & Remainder & Proposition \ref{ZA}
	 \\ 
	 $\Upsilon_M$ & Remainder & Lemma \ref{maxind}
	 \\
	 $\zr$ & Riemann $\zeta$-function & Theorem \ref{Linear coeff}
	 \\
	 $\msfz_k$ & $\zr(2k)$ & Equation \eqref{eq:msfzk}
	 \\
	 $S, D, S_i, D_1$ & Single and double sums in the formula for $c_m$ & Section \ref{subsec: nonvanishing of the leading order coefficient}
\end{longtable}

\bibliographystyle{alpha}
\bibliography{BetaSource}

\end{document}